\newcommand{\mbbE}{\mathbb{E}}
\newcommand{\mbbP}{\mathbb{P}}
\newcommand{\mbbR}{\mathbb{R}}
\newcommand{\mcB}{\mathcal{B}}
\newcommand{\mcC}{\mathcal{C}}
\newcommand{\mcM}{\mathcal{M}}
\newcommand{\mcY}{\mathcal{Y}}
\newcommand{\barrho}{\bar{\rho}}
\newcommand{\bpf}{\begin{proof}}
\newcommand{\epf}{\end{proof}}
\theoremstyle{definition}\newtheorem{thm}{Theorem}
\theoremstyle{definition}\newtheorem{lem}[thm]{Lemma}
\theoremstyle{definition}
\theoremstyle{definition}\newtheorem{exm}[thm]{Example}
\theoremstyle{definition}\newtheorem{defi}[thm]{Definition}
\theoremstyle{definition}\newtheorem{rem}[thm]{Remark}
\theoremstyle{definition}
\begin{document}

%\iffalse
%\title{Nash Equilibrium Structure of\\ {\color{magenta}{Cox Process Hotelling Games}}}

%\author{A\& B}

%\date{2018}

\title{
%Information--Theoretic\\ 
Nash equilibrium structure of\\
Cox process Hotelling games }

\author{
Venkat Anantharam\\ 
{\em EECS Department}\\
{\em University of California, Berkeley}\\
ananth@eecs.berkeley.edu\\
 \and
Fran\c{c}ois Baccelli\\
{\em Department of Mathematics and ECE Department}\\
{\em University of Texas, Austin}\\
and\\
{\em INRIA/ENS},
{\em Paris, France}\\
francois.baccelli@austin.utexas.edu {\em or} francois.baccelli@inria.fr}

\maketitle

\begin{abstract}
%\color{blue}
We study an $N$-player game where a pure action of each player is to
select a non-negative function on a Polish 
space supporting a finite diffuse measure, subject to a finite constraint on the
integral of the function. This function is used to define the intensity of a Poisson point process on the Polish space.
The processes are independent over the players, and the value to a player is the measure of the union of its 
%\red
open
%\black
Voronoi cells in the superposition point process. 
Under randomized strategies, the process of points of a player is thus a Cox process, and the nature of competition between the players is akin to that in Hotelling competition games.
We characterize when such a game admits Nash equilibria and prove that when a Nash equilibrium exists, it is unique and comprised of pure strategies that are proportional in the same proportions as the total intensities. We give examples of such games where Nash equilibria do not exist. A better understanding of the criterion for the existence of Nash equilibria remains an intriguing open problem.

\end{abstract}

\noindent
{\bf Keywords}: Constant sum game; Cox process; Game theory; Hotelling competition; Nash equilibrium; 
Poisson process. % insert keywords separated by a semicolon

\noindent
{\bf Classification}: {60G55;91A06;91A60}

%\begin{document}
%\maketitle
%\fi

\section{Introduction}		\label{sec:Intro}

%\subsection{Description of the problem to be studied}
\subsection{Informal problem formulation}		
\label{subsec:Descr}

We study {\emph {games of spatial competition of the Hotelling type}},
%{\emph {Hotelling type competition games}} 
where $N$ players
compete for space. Here space is modeled
%, e.g., some complete subset $D$ of the Euclidean space, or more
%generally some 
as a complete separable metric space (i.e. a {\emph {Polish space}}) $D$ supporting a {\emph {diffuse}} 
(i.e. non-atomic) finite positive measure $\eta$ on its
Borel $\sigma$-field, which we denote by $\mcB$.
Throughout the paper we think of $D$ as being endowed with a fixed metric $d$ generating its topology.
%with some restrictions for technical reasons, see below/
The game in question is a one-shot game. Each player's
action consists in selecting a nonnegative measure on 
$D$ 
\footnote{Since we will always consider $D$ as being metrized by the metric $d$ and endowed with its Borel $\sigma$-algebra
$\mcB$, we will not mention $d$ and $\mcB$ 
where these can be inferred from the context.
Thus, for instance, by a nonnegative measure on $D$ what we actually mean is a nonnegative measure on 
$(D, \mcB)$. Similarly, when we talk about the 
open Voronoi cell of a point in a configuration of points from $D$, we implicitly mean that the Voronoi cells are based on the metric $d$.}
which is 
absolutely continuous with respect to $\eta$,
among the set of all measures with a fixed finite and positive total
mass. This mass constraint depends on the player
and represents the total available intensity that the player can 
deploy over the space. 
The measure chosen by each player
results in a Poisson process of points on $D$
with this measure as its intensity measure, 
these processes being independent.
Thus, if a player uses a randomized strategy, the point process on $D$ generated by each player is a {\emph {Cox point process}},
namely a point process which is conditionally Poisson given
its random intensity.
% the outcome of the randomization.
%The payoffs are then as in the Hotelling game \cite{Hotel}: 
The payoff of 
%user $i$ 
each player 
%is the sum
is the total $\eta$ measure
of the 
%areas 
union
of the {\emph {open Voronoi cells}} of the points of the point process of this player, where the
open Voronoi cells are evaluated with respect to the superposition of the point processes of all players (if there are no points, which can happen with 
positive probability, each player gets zero value; further, we need to make a technical assumption on the metric structure of $D$ which ensures that the union of the open Voronoi cells is of full measure, conditioned on there being at least one point in the superposition point process).
%The assumption is that the point processes sampled by the $N$ players are independent.
Our goal in this paper is to study the {\emph {Nash equilibria}} of this game. 

In the $2$-player case, one motivation for such a formulation comes from
a model for defense against threats. The underlying space may be thought
of as a model for the set of possible attack modalities, with the metric
indicating how similar attacks are to each other. The defender and 
the attacker are respectively interested in defending against or deploying 
the different kinds of attacks, and evaluating value by the 
total $\eta$ measure of the union of the open Voronoi
cells of the points of each of them evaluates how well each of them is doing with regard to its individual objective of getting the upper hand over the other.
The stochastic nature of the placement of the points 
of a player is meant to capture the idea that the deployment of effort
only results in success in a stochastic way. The finite total intensity that
each each player can deploy represents individual budget constraints.
The study of Nash equilibria is then motivated by the goal of getting
some insights into how the individual players (i.e. the attacker and defender) might play when faced with
such an environment. Apart from its possible intrinsic interest, it turns
out that it is no more difficult to study the $N$-player version of the
game than the $2$-player version, for reasons that we will soon see,
so we have formulated the problem we study at this apparently broader level
of generality.

\subsection{Literature survey}		\label{subsec:Backgd}

%{\color{blue}{This section should survey the literature on and motivation for Hotelling games}}.

The origin of the study of spatial competition models is generally
attributed to a paper by Hotelling \cite{Hotel}.
Hotelling's model 
%comes from the area of economics and 
has some additional features, such as 
prices set by the sellers,
%and costs of the buyers, 
which 
will not play a role in our formulation. Building on the {\emph {Cournot 
duopoly model}} \cite{Cournot}, \cite[Sec. 27.5]{Varian}, as refined by 
Bertrand \cite{Bertrand}, \cite[Sec. 27.9]{Varian} and Edgeworth
\cite{Edgeworth}, the key innovation of \cite{Hotel} is to introduce
spatial aspects to the 
modeling of the competition between sellers for buyers.
\iffalse
\begin{figure}
  \begin{center}
    \includegraphics[width = 1.0 \textwidth]{fig2.png}
    \captionof{figure}{Hotelling competition on the unit interval with three sellers, with locations indicated by circles. The individual open Voronoi cells are indicated in color. In the absence of price discrimination, consumers residing at a location in the open Voronoi cell of a seller will go to that seller.}
    \label{fig:Hotelling}
    \end{center}
\end{figure}
\fi
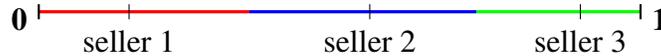
\begin{figure}
\begin{center}
\begin{tikzpicture}[xscale=8]
\draw[-][draw=red, very thick] (0,0) -- (0.35,0);
\draw[-][draw=blue, very thick] (0.35,0) -- (0.7275,0);
\draw[-][draw=green, very thick] (0.7275,0) -- (1,0);
\draw [thick] (0,-.1) node[left]{\bf 0} -- (0,0.1);
%\draw [thick] (0.5,-.1) node[below]{$a=b=1/2$} -- (0.5,0.1);
\draw [thick] (1,-.1) node[right]{\bf 1} -- (1,0.1);
\draw [-] (0.15,-.1) node[below]{seller 1} -- (0.15,0.1);
\draw [-] (0.55,-.1) node[below]{seller 2} -- (0.55,0.1);
\draw [-] (0.9,-.1) node[below]{seller 3} -- (0.9,0.1);
\end{tikzpicture}
\caption{Hotelling competition on the unit interval with three sellers, with locations indicated. The individual open Voronoi cells are indicated in color. In the absence of price discrimination, consumers residing at a location in the open Voronoi cell of a seller will go to that seller.}
    \label{fig:Hotelling}
\end{center}
\end{figure}
%basic duopoly model. 
Specificially, Hotelling 
considers the problem faced by two sellers as to where to position 
themselves along an interval of fixed length, which models a market 
along which consumers are uniformly distributed, and how to individually 
set prices for the one identical good that they sell so that each 
seller maximizes its profit, given the strategy of the other seller.
In the model of Hotelling 
%consumers are uniformly distributed along the line and 
each consumer incurs a transportation cost proportional
to its distance from the seller from which it buys, which then determines
which seller it prefers, given the location of the sellers. 

Notice that for fixed locations of the two sellers and 
if price discrimination is not possible (i.e. prices are identical
at the two sellers) consumers would go to the closer seller, so
the problem of each seller becomes that of how to position itself, in 
reaction to position of the other seller, so as to maximize the 
length of its Voronoi cell in the {\emph {Voronoi decomposition}} associated to the
locations of the two sellers. See \cite[Fig. 1, pg. 45]{Hotel}.
%\footnote{
%{\bf Reproduce a related figure here for convenience, including 
%appropriate labels.}
%}
The problem considered in this paper is of this purely 
Voronoi-cell-decomposition type. 
It is worth noting that the solution concept in 
\cite{Hotel} or, for that matter, in \cite{Cournot}
is already of the type that one would today call a pure-strategy 
Nash equilibrium, though of course these works predate 
by several decades the work of 
von Neumann and Morgenstern \cite{vNM} and 
Nash \cite{NashPNAS, NashAnnals}, which formalized the notion of 
Nash equilibrium.

There is by now a vast literature on the Hotelling competition model,
and many different variants have been developed and studied.
%more generally on Voronoi games. 
Rather than attempt to survey
this work, particularly since the precise problem 
formulation we consider does not appear in the prior literature 
in this area, we refer the reader to the surveys 
in \cite{Graitson, GabThisse, ELT, ReVelle},
%of 
%Graitson \cite{Graitson}, Gabszewicz and Thisse \cite{GabThisse},
%Eiselt, Laporte 
%and Thisse \cite{ELT}, and ReVelle and Eiselt \cite{ReVelle},
and to the papers in the recent 
edited volume \cite{Mallozi}.
%volume edited by Mallozi, D'Amato and
%Pardalos \cite{Mallozi}.
%The initial motivations were facility location problems, where two competing companies compete 
%for offering services in a geographic region. The idea being that customers will favor the nearest
%location when assuming prices are equal. Hence the Voronoi structure as a payoff.
%In the Euclidean setting, the initial model featured full freedom on the location of points placed by the players,
%and a fixed number of point per players. Variants include (1) the situation where prices are not the same,
%which distorts the Voronoi structure; (2) that where the number of possible locations is discrete
%rather than a continuum (see, e.g. \cite{Scarsini}). For a survey on the matter, see \cite{Scarsini-survey}.

Another growing body of work that is related %in general terms
to the themes of this paper is that of
Voronoi games, see the seminal papers 
\cite{Ahn04} and \cite{Cheong04}.
%of Ahn et al. \cite{Ahn04}
%and Cheong et al. \cite{Cheong04}
Here also the players are competing to capture regions of space
according to the Voronoi decomposition of the underlying space
based on the choice of the locations of their points but, in contrast
to the model we consider, the players are assumed to have
control over exactly where they can place their points with the constraint, in some
of the literature on Voronoi games, that
these locations should lie in a given finite set of potential
locations in the ambient space. Further, in contrast to the model we consider, much of
this literature assumes that the players place their points one at a time,
alternating between the players.
There is also a particular interest in this literature in the study
of Voronoi games on graphs, see e.g. \cite{Teramoto, DurrThang}.
%the paper of 
%Teramoto, Demaine and Uehara \cite{Teramoto} and
%of D\"{u}rr and Thang \cite{DurrThang}. 
Note that this is
%This, however, is 
incompatible with the non-atomic nature of the underlying measure
$\eta$ required for our formulation.
Nevertheless, our formulation was partly inspired by a recent work in this
area, on so-called {\emph {Voronoi choice games}}, 
by Boppana et al. \cite{BHMM}.

There are also some similarities between the game we analyze and the
study of the so-called {\emph {Colonel Blotto games}}, see 
\cite{Col-Blot},
%Roberson \cite{Col-Blot},
on variants of which also there is a rapidly growing literature.
%game we study and 
%The game considered in the present paper differs from the classical Hotelling game and from
%the variants discussed above in two ways. Rather than
%placing say $m_i$ points freely in $D$ (as in the Hotelling competition game),
%player $i$ selects an {\emph {intensity measure}}, say $\mu_i$ with total mass $\mu_i=(D)=\rho_i$
%(or a random mix of such measures) and then samples a Poisson point process with the said intensity measure.
In the game we study, the ability of a player to control exactly where
to place its points, as in the literature on Hotelling games or 
Voronoi games,
%So the fixed number of points of the Hotelling scenario
%is replaced 
can be thought of as being replaced
by a softer 
%constraint on
ability of being able to control the intensity or, in effect, 
the local mean number of points, in the same way as the 
formulation of the so-called {\emph {General Lotto game}},
see \cite{Gen-Lot},
%see Kovenock and Roberson \cite{Gen-Lot}, 
softens
the ability of individual players to fix the number
of soldiers to be placed on each battlefield of the Colonel Blotto game. 
%\cite{Col-Blot}.
% and replaces it by a softer
%constraint in mean. 
%In addition, given that user $i$ selects measure $\mu_i$, if there are $m$ points in the sampling
%of player $i$, (which happens with a probability equal to the mass at $m$ of a Poisson random variable
%of parameter $\rho_i$), then player $i$ is constrained to sample these $m$ points independently
%and according to measure $\mu_i$ on $D$. Hence, compared to the Hotelling competition game,
%the Cox process Hotelling game features a Poisson (or Cox) relaxation of the the total number of points
%of Hotelling, and a new (conditional independence) constraint on the way to place points.
%The name {\emph {Cox process Hotelling}} looks preferable to the name Cox-Hotelling because of the presence
%of these two features, which are point processes features rather than just a randomization of the total number of points).
However, the reward structure of the players in a Colonel Blotto game
is completely different from that in the Hotelling games or Voronoi 
games.

\subsection{Notational conventions}		\label{subsec:Notation}

$\mathbb{N}$ denotes the set of natural numbers.
$\mathbb{R}_+$ denotes the set of non-negative real numbers.
We use $:=$ and $=:$ for equality by definition.
The indicator of a set or an event $E$ is written
as $1_E$ or $1(E)$.

\subsection{Structure of the paper}

Section \ref{sec:CPHG} 
%is of an
%introductory nature, 
sets up the structure of $2$-player Cox processes Hotelling games.
%and introduces the notion of Nash-equilibria
%in the context of such games.
Section \ref{sec:Lemmas} 
introduces the 
$N$-player Cox process Hotelling games
and develops several general properties
of the value function in these games;
these are used later to prove the main results.
%technical lemmas for the case of on general Polish spaces.
Section \ref{sec:Homogeneous} is focused on the case where the underlying Polish space on which the game is played is compact and admits a transitive group of metric preserving automorphisms, with the base measure of the game being invariant under this group.
Section \ref{sec:General} studies the general case of the $N$-player game
and determines the structure of
the Nash equilibria when they exist.
A necessary and sufficient condition for the existence of Nash equilibria is provided, and also examples are given 
where no Nash equilibrium exists.
%It is proved that
Although
Nash equilibrium, when it exists, is unique and comprised of pure strategies,
it is established that Cox process Hotelling games are not ordinal potential games in general.
A family of so-called restricted Cox process Hotelling games is defined to provide a vehicle to better understand the meaning of the criterion for the existence of Nash equilibria.
%\black

\section{Cox process Hotelling games}	\label{sec:CPHG}

%{\color{blue}{This section should set up the Poisson Voronoi Colonel Blotto game on an arbitrary complete subset of $d$-dimensional Euclidean space. The key formula that gives the expected Voronoi region captured by a player for a strategy profile should be explained intuitively only. The formal proof should be done in an Appendix in an abstract framework involving a metric space that can support Poisson processes and where the Voronoi cell can be defined. There should also be a discussion in this appendix about the special situation where there is a transitive group of measure preserving transformations.}}

%{\em \color{red} Need to be more precise about the kinds of domains one is working on We need to be able to define Poisson processes on this set. \color{black}} {\em \color{blue} This will need to be done more abstractly to be able to discuss examples like $d$-dimensional spheres, which are metric measure spaces having a transitive group of metric and measure preserving automorphisms. \color{black}}

\subsection{Diffuse non-conflicting finite positive measures}
\label{subsec:diffuse}

Let $D$ be a 
%compact 
%second-countable Hausdorff topological space.
%complete separable metric space, i.e. a 
Polish space. We write $\mcB$ for the Borel $\sigma$-field of $D$.
A finite positive measure $\eta$ on $D$
is called {\emph {diffuse}}
 if for every Borel set $B \in \mcB$ with $\eta(B) > 0$
there is a Borel subset $C \subset B$ with $0 < \eta(C) < \eta(B)$.
%Another terminology for a diffuse measure is to call it {\emph{non-atomic}}.
One can define 
%random point processes and, in particular, Poisson point processes 
a Poisson process on 
%such a Polish space
$D$ based on such a diffuse positive measure $\eta$ on $D$,
%on its Borel $\sigma$-field 
see e.g. \cite[Ch. 9]{DvJ2}.
%Point processes 
A point process 
on $D$ 
%are random counting measures on 
is a random counting measure on
$D$.
%the Borel $\sigma$-algebra of $D$.
%For instance, the 
The Poisson process on $D$ with intensity measure $\eta$, where $\eta$ is any finite positive diffuse
measure on $D$, is the point process obtained by first selecting the total number of points
according to the Poisson distribution on $\mathbb N$ with mean $\eta(D)$, and then
sampling independently the location of the points, if any, according to the measure $\eta(.)/\eta(D)$ on $\mcB$.
%the Borel subsets of $D$.

%A simple example is a compact subset of $\mathbb R^d$.
%Below, we will be consider the special case where $D$ is in addition an abelian topological group, and focuss on
%the Haar measure of this topological group, namely the unique measure on $D$, where uniqueness is
%up to multiplicative constant, which is preserved by all translations by an element of the group $D$.
%A simple example is the sphere, as each point
%can be seen as a rotation. The Haar measure is then the spherical
%Lebesgue measure. Another example is provided by the Torus.
%When denoting by $\eta$ the Haar measure with total mass 1 on $D$, one defines the homogeneous 
%Poisson point process of intensity $\lambda>0$ on $D$ as that with intensity measure $\lambda \nu(.)$.
%Assuming that the topology of $D$ is induced by a metric (e.g. the spherical or toroidal distance in the 
%examples discussed above),
%Let $d$ be any metric generating the topology of $D$.
Recall that $d$ denotes the fixed metric on $D$ generating its topology.
The {\emph {open Voronoi cells}} of any point process $\Phi$ with respect to $d$ can be defined in the usual way. Namely, for
$x$ in the support of the point process $\Phi$,
the open Voronoi cell $W_{\Phi}(x)$ is comprised of those $z \in D$
such that $d(x,z) < d(x',z)$ for all points $x' \neq x$ in the support of $\Phi$. 
%The following additional restrictive assumption will be made below: for the homogeneous Poisson point process $\Phi$,
%the probability to have two points of $\Phi$ or more equidistant from an element of $D$ is 0.}}
For our purposes, we need to impose on $\eta$ the condition
that the union of the open Voronoi cells of the Poisson process on $D$
with intensity measure $\eta$ has full measure $\eta(D)$ with 
probability $1$, conditioned on there being at least one point in this Poisson process. If $\eta$ satisfies this condition, we call it
{\emph {non-conflicting}}. A sufficient condition for this to hold is given in Appendix \ref{sec:append}.
%\footnote{
%{\bf Note: It would be good to give some checkable condition for this, if possible, and 
%a formalproof of its correctness (in an appendix).For instance, it might be true that if we assume that $\eta(\partial B(y \to x)) = 0$ for all $x, y \in D$,
%where $\partial B(y \to x)$ denotes the boundary of the ball 
%of radius $d(x,y)$ centered at $y$, then $\eta$ is non-conflicting.}}

The importance of imposing a non-conflicting condition can be
understood by considering the following example.

\begin{exm}		\label{exm:conflicting}

Suppose $D$ is the disjoint union of two unit intervals
$I$ and $J$, each of length $1$.
%\footnote{{\bf It would be good to include a figure to illustrate this example.}}
Assume that every point in $I$ is at distance $2$ from every
point in $J$. 
\iffalse
\begin{figure}
  \begin{center}
    \includegraphics[width = 0.7 \textwidth]{fig1.png}
    \captionof{figure}{A metric space comprised of two intervals, each of length $1$. Every point in the interval on the left is at distance $2$ from every point in the interval on the right. The metric on each interval is the usual one.}
    \label{fig:Twointervals}
    \end{center}
\end{figure}
\fi
\begin{figure}
\begin{center}
\begin{tikzpicture}[xscale=1.5]
\draw[-][draw=red, ultra thick] (-2.0,-1) -- (-2.0,1);
\draw[-][draw=red, ultra thick] (2.0,-1) -- (2.0,1);
\node at (-2.6,0) {length 1};
\node at (2.6,0) {length 1};
\draw[->] (-2.6,0.25) -- (-2.6,1);
\draw[->] (-2.6,-0.25) -- (-2.6,-1);
\draw[->] (2.6,0.25) -- (2.6,1);
\draw[->] (2.6,-0.25) -- (2.6,-1);
\node at (0,0) {distance 2};
\draw[->] (0.5,-0.25) -- (1.8,-0.9);
\draw[->] (-0.5,0.25) -- (-1.8,0.9);
\node at (0,-1) {distance 2};
\draw[->] (0.7,-1) -- (1.8,-1);
\draw[->] (-0.7,-1) -- (-1.8,-1);
\end{tikzpicture}
\caption{A metric space comprised of two intervals, each of length $1$. Every point in the interval on the left is at distance $2$ from every point in the interval on the right. The metric on each interval is the usual one.}
    \label{fig:Twointervals}
\end{center}
\end{figure}
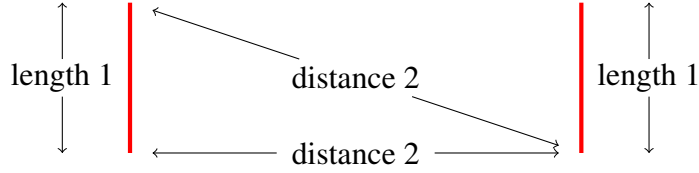
It is straightforward to check that 
the resulting metric makes $D$ a 
Polish space and that the measure $\eta$ on the Borel $\sigma$-field
of $D$ which corresponds to Lebesgue measure on the two unit intervals
comprising $D$ is a finite positive diffuse measure. However,
%it is 
%not possible to define Voronoi cells for a Poisson process on $D$ with
%intensity measure $\eta$ without introducing some rule for resolving
%conflicts. 
$\eta$ is not non-conflicting.
This is because, for instance, on the event of positive probability
that 
the Poisson process on $D$ with intensity measure $\eta$ has two points in $I$
and no points in $J$, every point of $J$
will be distance $2$ from each of the two points of the Poisson process
in $I$ and so 
%it is a priori unclear on this event
%which of the two Voronoi cells one should attribute the points in $J$ to.
does not belong to the open Voronoi cell of either point.
\hfill $\Box$
\end{exm}

Throughout the paper, $\eta$ is
a finite positive diffuse 
non-conflicting measure on $D$ which will be referred to as the {\emph {base measure}}.
%{\bf Need to define ``non-conflicting"}
It is straightforward to check that if $\nu$ is any finite positive
measure on $D$
%Borel $\sigma$-field of $D$ 
that is absolutely
continuous with respect to $\eta$, then $\nu$ is also
diffuse and non-conflicting.
%\footnote{
%{\bf We need to agree that this is straightforward. If not,
%a proof should be written down (in an appendix).}}
We write $\mcM(D)$ for the set of nonnegative finite measures on 
$D$.

\subsection{Radon-Nikodym derivatives}		\label{subsec:RN}

In the {\emph {Cox process Hotelling games}} 
considered in this paper, we think
of each individual player as choosing a positive measure 
on $D$ of fixed total 
mass 
%on the Borel $\sigma$-field of $D$ 
that is 
absolutely continuous with respect to the base measure $\eta$.
The measure chosen by the player then 
serves as the intensity measure for a Poisson process of points
on $D$,
which we think of as the points belonging to that player.
By the Radon-Nikodym theorem, we may identify the measure chosen
by the player with its likelihood function with respect to the base measure.

With this viewpoint in mind,
for any $\rho > 0$, let 
\begin{equation}		\label{eq:Crho}
\mcC(\rho) := \{f:D \to \mbbR_+ \mbox{ s.t. } \int_D f(x) \eta(dx) = \rho \}.
\end{equation}
$\mcC(\rho)$ can be thought of as a subset of
$L^1(\eta)$, i.e. the Lebesgue space of $\mcB$-measurable functions on 
$D$ that are absolutely integrable with respect to the base
measure $\eta$, and also as a set of measures on $D$
by identifying $f \in \mcC(\rho)$ with the measure $f \eta$. With the latter viewpoint in mind, we think of $\mcC(\rho)$ as endowed with the 
topology of weak convergence of measures
\cite{Partha},
%in the sense of weak convergence,
which it inherits as a subset of $\mcM(D)$. Note that,
for all $\rho > 0$ and
all $f \in \mcC(\rho)$, the measure $f \eta$ is diffuse and
non-conflicting. Also note that, for all $\rho > 0$, the 
set $\mcC(\rho)$ is a convex subset of $\mcM(D)$. However, since we have endowed $\mcC(\rho)$ with the topology of weak convergence of measures on $D$, this set is not closed.

For any $\rho > 0$, the subset of $\mcM(D)$ 
comprised of measures of total mass $\rho$,
with the topology
of weak convergence, can be metrized
%by the L\'{e}vy-Prokhorov metric, and this makes it 
so as to make it
a complete 
separable metric space \cite[Thm. 6.2 and Thm. 6.5]{Partha}.
%Recall that it is well known that to discuss convergence notions in this topology 
As a metric space it is first countable, so
it
suffices to discuss convergence of sequences rather than of nets 
\cite[Thm. 8]{Kelley}.
 To discuss randomized strategies
of the individual players we need to be able to discuss probability distributions on subsets of
the type $\mcC(\rho)$ of $\mcM(D)$.
This is made possible by the following result.
%\cite[Thm. 3.5]{Lange}
%which guarantees that each set of the type $\mcC(\rho)$ is measurable
%{\bf need normalization}
%in the Borel $\sigma$-filed of $\mcM(D)$ with its topology of weak convergence.

\begin{lem}	\label{lem:AbsContMeas}
For each $\rho > 0$, the set  $\mcC(\rho)$ is a Borel subset of 
$\mcM(D)$ when $\mcM(D)$ is endowed with the topology of weak 
convergence.
\end{lem}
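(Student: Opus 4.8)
The plan is to identify $\mcC(\rho)$, viewed inside $\mcM(D)$, with the set of finite measures $\mu \in \mcM(D)$ satisfying $\mu(D) = \rho$ and $\mu \ll \eta$ (the identification being via the Radon--Nikodym theorem, $\mu \leftrightarrow d\mu/d\eta$), and then to show that each of these two conditions cuts out a Borel subset. Since $\mcC(\rho)$ is contained in the closed set $\{\mu : \mu(D) = \rho\}$, which is a Polish subspace, one could work inside that subspace, but the arguments below apply directly in $\mcM(D)$ with the weak topology.

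The total-mass constraint is immediate: the constant function $1$ is bounded and continuous, so $\mu \mapsto \mu(D)$ is continuous for weak convergence, whence $\{\mu \in \mcM(D) : \mu(D) = \rho\}$ is closed, hence Borel.

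The substantive step, and the one I expect to be the main obstacle, is to show that $\{\mu \in \mcM(D) : \mu \ll \eta\}$ is Borel. I would start from the $\epsilon$-$\delta$ characterization of absolute continuity of finite measures: $\mu \ll \eta$ iff $\inf_{\delta > 0} g(\mu,\delta) = 0$, where $g(\mu,\delta) := \sup\{\mu(B) : B \in \mcB,\ \eta(B) \le \delta\}$. The difficulty is that $g$ is an uncountable supremum over Borel sets, so measurability in $\mu$ is not visible. The key reduction is to replace it by a countable supremum. Outer regularity of the finite Borel measure $\eta$ on the metric space $D$ gives, for every Borel $B$ with $\eta(B)\le\delta$ and every $\epsilon>0$, an open $U\supseteq B$ with $\eta(U)\le\delta+\epsilon$, and then $\mu(U)\ge\mu(B)$; conversely every open set contributes to $g$ directly. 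Hence, setting $g_{\mathrm o}(\mu,\delta):=\sup\{\mu(U): U\ \text{open},\ \eta(U)\le\delta\}$, one gets $g_{\mathrm o}(\mu,\delta)\le g(\mu,\delta)\le g_{\mathrm o}(\mu,\delta+\epsilon)$ for all $\epsilon>0$, so $\inf_{\delta>0}g(\mu,\delta)=\inf_{\delta>0}g_{\mathrm o}(\mu,\delta)$. Next, fix a countable base of balls of $D$ and let $\mathcal U_1$ be the countable family of finite unions of such balls; every open set is an increasing union of members of $\mathcal U_1$, so by continuity from below of $\mu$ and monotonicity of $\eta$ one checks $g_{\mathrm o}(\mu,\delta)=\sup\{\mu(W): W\in\mathcal U_1,\ \eta(W)\le\delta\}$. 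For each fixed open $W$, the map $\mu\mapsto\mu(W)$ is lower semicontinuous for the weak topology (portmanteau), hence Borel, and the index set $\{W\in\mathcal U_1:\eta(W)\le\delta\}$ does not depend on $\mu$; therefore $\mu\mapsto g_{\mathrm o}(\mu,\delta)$ is a countable supremum of Borel functions, hence Borel.

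Combining the pieces,
\[
\mcC(\rho)=\{\mu\in\mcM(D): \mu(D)=\rho\}\ \cap\ \bigcap_{k\ge1}\ \bigcup_{n\ge1}\ \{\mu\in\mcM(D): g_{\mathrm o}(\mu,1/n)\le 1/k\},
\]
which displays $\mcC(\rho)$ as a Borel subset of $\mcM(D)$. The only delicate points are the two approximation steps above---passing from arbitrary Borel sets to open sets at the cost of an arbitrarily small enlargement of $\delta$, and passing from arbitrary open sets to the countable family $\mathcal U_1$---both of which are routine consequences of regularity and continuity from below of finite measures on a Polish space.
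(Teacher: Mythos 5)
Your argument is correct, but it is a genuinely different route from the paper's: the paper disposes of this lemma with a one-line citation to Lange, ``Borel sets of probability measures'' (Thm.~3.5), whereas you give a self-contained elementary proof. Your identification of $\mcC(\rho)$, viewed inside $\mcM(D)$, with $\{\mu : \mu(D)=\rho,\ \mu\ll\eta\}$ is exactly right (Radon--Nikodym in one direction, absolute continuity of $f\eta$ in the other), and the two reductions that make $\{\mu\ll\eta\}$ visibly Borel --- replacing the uncountable supremum over Borel sets by a supremum over open sets via outer regularity of the finite measure $\eta$, and then replacing open sets by finite unions of basic balls via continuity from below --- are both sound; together with the $\epsilon$--$\delta$ characterization of absolute continuity for finite measures and lower semicontinuity of $\mu\mapsto\mu(W)$ for open $W$ (portmanteau), they yield the displayed Borel decomposition. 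The one point worth making explicit is that the portmanteau argument gives \emph{sequential} lower semicontinuity, which suffices only because the relevant space is metrizable; your remark that one may work inside the closed subspace $\{\mu:\mu(D)=\rho\}$, which the paper notes is a complete separable metric space, covers this. What your approach buys is transparency and independence from the reference; what the citation buys is brevity and a somewhat more general statement (Lange's result concerns Borel structure on spaces of measures in wider generality).
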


\bpf
See \cite[Thm. 3.5]{Lange}.
\epf

%{\bf state result, include bib entry}.

\subsection{$2$-player games}		\label{subsec:2-player}

In the $2$-player version of the game, 
%on such a topological group, 
a pure action of Alice 
consists in choosing an intensity measure which is absolutely continuous 
with respect to the
%Haar 
measure $\eta$. Namely 
%in choosing
Alice chooses 
a nonnegative measurable function
%\[
%f_A:D\to \mathbb{R}_+,
%\]
$f_A:D\to \mathbb{R}_+$
as the Radon Nikodym derivative with respect to $\eta$ of the intensity measure of its Poisson point process.
We denote the set of functions from which 
this choice must be made 
as $\mcC_A$, i.e.
\begin{equation}			\label{eq:classA}
\mcC_A := \mcC(\rho_A) =  \{f_A:D \to \mbbR_+ \mbox{ s.t. } \int_D f_A(x) \eta(dx) = \rho_A \}.
\end{equation}
Thus $\rho_A$ can be thought of as a constraint
on the total intensity that Alice can deploy for her point process.

Similarly, Bob chooses a nonnegative measurable function $f_B:D\to \mathbb{R}_+$
within the class of functions $\mcC_B$, where
\begin{equation}		\label{eq:classB}
\mcC_B := \mcC(\rho_B) = \{f_B:D \to \mbbR_+ \mbox{ s.t. } \int_D f_B(x) \eta(dx) = \rho_B \}.
\end{equation}
Here $\rho_A$ and $\rho_B$ are fixed positive constants.
Note that $\mcC_A$ and $\mcC_B$ are convex subsets of 
$\mcM(D)$, but neither of them is closed
in the topology of weak convergence on 
$\mcM(D)$.

We denote such a $2$-player Cox process Hotelling game by 
$(D,\eta, \rho_A, \rho_B)$.

Assume that Alice plays $f_A$ and Bob plays $f_B$. The resulting
value for Alice, denoted by $V_A(f_A,f_B)$, is defined as follows.
Let $\Phi_A$ and $\Phi_B$ be independent Poisson point processes on 
$D$
with intensity measures $f_A \eta$ and $f_B \eta$, respectively.
We think of the points of $\Phi_A$ as the points of Alice, since these
result from the choice of $f_A$ by 
Alice. Similarly we think of the points of $\Phi_B$ as the points of Bob.
%By the usual abuse of notation in point process theory, we think of 
%$\Phi$ as both a subset of points in $D$ and a simple point measure on
%$D$, i.e. a sum of $\delta$-functions at its points. 
Let $\Phi :=\Phi_A+\Phi_B$.
For all $x \in \Phi$, let $W_\Phi(x)$ denote the open Voronoi cell of $x$ 
with respect to
$\Phi$.
Then, 
$$V_A(f_A,f_B) := \mathbb{E} [\sum_{x\in \Phi_A} \eta(W_\Phi(x))],$$
%where $|S| :=\eta(S)$ denotes the 
%{\color{magenta}{Haar measure of the measurable set $S$}}, 
%measure of the measurable set $S$,
where the 
expectation is with respect to the joint law of $\Phi_A$ and $\Phi_B$,
which are 
%{\color{magenta} assumed} 
assumed to be independent.
Here, by definition, a sum over an empty set is 0.
The value for Bob resulting from this pair of actions, 
denoted $V_B(f_A,f_B)$, is determined
similarly, i.e. it is
$$V_B(f_A,f_B) := \mathbb{E} [\sum_{x\in \Phi_B} \eta(W_\Phi(x))].$$

In words, the value of Alice is the mean value of the
sum of the $\eta$-measures of the open $\Phi$-Voronoi cells centered at her points
if she has points, and is $0$ otherwise, and likewise for the value of Bob. Intuitively, one thinks of the 
points of Alice and those of Bob as competing to capture the ambient space,
with a point in the ambient space $D$ belonging to Alice if the closest
point to it is one of Alice, and to Bob otherwise. Under our non-conflicting condition on the base measure $\eta$, there is no need to worry about how to
break ties.

Note that 
\begin{equation}		\label{eq:conservation2}
V_A(f_A,f_B) + V_B(f_A,f_B) = \eta(D) (1 - e^{- \rho }),
\end{equation}
for all $(f_A,f_B) \in \mcC_A \times \mcC_B$,
where $\rho := \rho_A + \rho_B$. This is because, by virtue of our assumption that $\eta$ is non-conflicting, the 
sum of the values of Alice and Bob is $\eta(D)$, except on the event where
$\Phi_A$ and $\Phi_B$ both do not have points, which is an event
of probability $1 - e^{- \rho }$.
This formula in Equation \eqref{eq:conservation2} is 
formally established in Lemma \ref{lem:totalvalue} below.

%{\color{magenta}{This explains the name proposed for the game.}} 

\subsection{Nash equilibrium in $2$-player games}
\label{subsec:NE2-defn}

In view of Lemma \ref{lem:AbsContMeas},
any mixed strategy pair in the $2$-player game 
$(D, \eta, \rho_A, \rho_B)$ between Alice and Bob can 
be written as 
$(f_A(M_A), f_B(M_B))$, where 
$M_A \in \mcM_A$ and $M_B \in \mcM_B$ are independent
random variables representing the randomizations used by Alice and
Bob respectively in implementing randomized strategies, and
$f_A(m_A) \in \mcC_A$ (respectively $f_B(m_B) \in \mcC_B$) is the 
choice of Alice (respectively Bob) in case the realization of
her random variable $M_A$ is $m_A$ (respectively, the realization of
his random variable $M_B$ is $m_B$).
The value of Alice in such a mixed strategy is 
$\mbbE[ V_A(f_A(M_A), f_B(M_B))]$, while that of Bob is 
$\mbbE[ V_B(f_A(M_A), f_B(M_B))]$, the expectations being taken
with respect to the joint distribution of $M_A$ and $M_B$, which are
independent. 
As a direct consequence 
of Equation \eqref{eq:conservation2},
we have
\begin{equation}		\label{eq:randconserve2}
\mbbE[ V_A(f_A(M_A), f_B(M_B))] + \mbbE[ V_B(f_A(M_A), f_B(M_B))]
= \eta(D)( 1 - e^{-\rho}),
\end{equation}
where $\rho := \rho_A + \rho_B$. 

\begin{defi}			\label{def:NE-2}
The pair of independently randomized strategies $(f_A(M_A), f_B(M_B)) \in \mcC_A \times \mcC_B$
is called a {\em Nash equilibrium} of the game between Alice and Bob
if, for all $g_A \in \mcC_A$ and $g_B \in \mcC_B$, we have
\begin{equation}	\label{eq:NashEqForA}
\mbbE[V_A(f_A(M_A),f_B(M_B))] \ge \mbbE[V_A(g_A, f_B(M_B))],
\end{equation}
and
\begin{equation}		\label{eq:NashEqForB}
\mbbE[V_B (f_A(M_A), f_B(M_B))] \ge \mbbE[V_B(f_A(M_A), g_B)].
\end{equation}
In words, Alice sees no advantage in playing the strategy $g_A$
instead of her randomized strategy $f_A(M_A)$, given that Bob is 
playing his randomized strategy $f_B(M_B)$, and similarly for Bob.
\hfill $\Box$
\end{defi}

\section{Supporting lemmas}		\label{sec:Lemmas}

\subsection{A formula for the value}		\label{subsec:valueformula}

Consider a $2$-player Cox process Hotelling game
$(D, \eta, \rho_A, \rho_B)$ between Alice and Bob, and suppose that
the players play the action pair $(f_A, f_B)$. 
Let $\Phi_A$ and $\Phi_B$ denote the Poisson processes of points of 
Alice and Bob respectively on $D$. Recall that these are independent 
point processes.
Let $\mathbb{P}_A^x$ (resp.
$\mathbb{P}_B^x$) denote the Palm probability
\cite[Chap. 6]{Kallenberg} with respect to $\Phi_A$ 
(resp. $\Phi_B$) at $x$. 
%The Palm probability here considered on the product space of the two point processes.
%%{\color{magenta} 
%{\bf Need a precise citation.}
By Slivynak's theorem,
\cite[Lemma 6.14]{Kallenberg}
%{\bf need a citation},
under the Palm probability with respect to  $\Phi_A$ at $x$, $\Phi_A$ consists of a point at $x$ and
a Poisson process on $D$ of intensity $f_A \eta$, and $\Phi_B$ consists of an independent 
Poisson process on $D$ of intensity $f_B \eta$. The Palm probability with respect to $\Phi_B$ at $x$ 
has a symmetrical description.
% [Last and Penrose].
%{\bf Need a precise citation.}

From Campbell's formula
%[Last and Penrose], 
\cite[Sec. 6.1]{Kallenberg}
%{\bf Need a precise citation.}
%\footnote{\bf{
%It would also be good to have a brief discussion in an appendix of how 
%we are using Campbell's theorem
%and Slivnyak's theorem. This is particularly important since we making
%these statements for Poisson processes on an arbitrary Polish space.}}
we have
%{\color{magenta}{
$$V_A(f_A,f_B) = \int_D f_A(x) \mathbb{E}_A^x [\eta(W_\Phi(x))] \eta(dx).$$
%}}
Here the expectation is with respect to the law of $\Phi$ under 
the Palm distribution $\mathbb{P}_A^x$. 
%As explained above, 
%under $\mathbb{P}_A^x$, $\Phi_A$ has the same distribution
%as $\delta_x+\tilde{\Phi}_A$, where $\tilde{\Phi}_A$ has the 
%distribution of $\Phi_A$ under the original probability distribution 
%$\mbbP$. Further, since $\Phi_A$ and $\Phi_B$ are
%independent under the original probability distribution $\mbbP$,
%we have that $\Phi_B$ has the same distribution under 
%$\mathbb{P}_A^x$ as it does under $\mbbP$, and that $\Phi_A$ and
%$\Phi_B$ are independent under $\mathbb{P}_A^x$.
From the description of this Palm distribution, we have
%{\color{magenta}{
\begin{eqnarray*}
 \mathbb{E}_A^x [\eta(W_\Phi(x))] & = &
 \mathbb{E}_A^x [\int_{y\in D} 1_{\{y\in W_\Phi(x)\}} \eta(dy)] \\
&\stackrel{(a)}{=}& \int_{y\in D} \mathbb{P}_A^x( y\in W_\Phi(x)) \eta(dy) \\
&\stackrel{(b)}{=}& \int_{y\in D} e^{-\int_{B(y\to x)} (f_A(u)+f_B(u)) \eta(du)} \eta(dy), 
\end{eqnarray*}
where 
$$B(y\to x) := \{z\in D \mbox{ s.t. } d(z,y)\le d(x,y) \}.$$
The notation $B(y \rightarrow x)$ is supposed to bring to mind
a closed ball centered at $y$ having $x$ at its boundary.
% with the caveat being
%that the only points of interest are those in $D$.
%We used here the assumption that no two points of 
%a point process
%with intensity absolutely continuous with $\nu$
%of the point process $\Phi$ 
%can be equidistant from an element of $D$.
%{\bf need to think this through, e.g. two circles with the points of 
%one circle all being at distance $1$ from the points of the second 
%circle may pose a problem.}
%}}

In the chain of equations above, the step (a) comes from 
an application of Fubini's theorem and 
step (b) from Slivnyak's theorem. Hence
%{\color{magenta}{
\begin{equation}		\label{eq:Valueformula}
V_A(f_A, f_B) =
\int_{x\in D} f_A(x)
\int_{y\in D} e^{-\int_{B(y\to x)} (f_A(u)+f_B(u)) \eta(du)} \eta(dy) \eta(dx).
\end{equation}
%Note that we have not used the fact that $\eta$ was the Haar measure so far,
%except for the assumption that no two points are equidistant from an element of $D$ a.s.
%So the last expression holds for all $\eta$ with the last property.\
%}}

\subsection{ $N$-player games}		\label{subsec:N-player}

The point process analysis 
%of Section \ref{sec:CPHG}
above
and the resulting formula in Equation \eqref{eq:Valueformula} in the
$2$-player case also leads to a formula
in an $N$-player 
Cox process Hotelling game for the value 
$V_i(f_1, \ldots, f_N)$ seen by player $i$
when the pure strategies deployed by the individual players are
$f_1,\ldots, f_N$ respectively. 
For this, let $f_j: D \to \mbbR_+$ be a
nonnegative measurable function on $D$ belonging to  
%{\color{magenta}{
\begin{equation}		\label{eq:classj}
\mcC_j := \mcC(\rho_j) = \{ f_j:D \to \mbbR_+ \mbox{ s.t. } \int_D f_j(x) \eta(dx) = \rho_j \},
\end{equation}
%}}
where $\rho_j > 0$ are fixed for $1 \le j \le N$.
Note that each $\mcC_j$ is a convex subset of $\mcM(D)$, but is not closed in the topology of  weak convergence.
Assume that the individual players play
$f_i \in \mcC_i$ for $1 \le i \le N$.
Let $\Phi_i$ be a Poisson point process on $D$ with intensity measure
$f_i \eta$,
%$f_i\cdot \eta(.)$,
with these processes being mutually independent for $1 \le i \le N$,
and let $\Phi := \sum_{i=1}^N \Phi_i$.
We think of the points of $\Phi_i$ as the points of player $i$, since these
result from the choice of $f_i$, which was made by that
player.
Then the value of player $i$, denoted
$V_i(f_1, \ldots, f_N)$, is given by
$$V_i(f_1, \ldots, f_N) := \mathbb{E} [\sum_{x\in \Phi_i} \eta(W_\Phi(x))].$$
The
expectation is with respect to the joint law of $(\Phi_j, 1 \le j \le n)$,
which are independent.
Here, by definition, a sum over an empty set is 0.

We denote such an $N$-player Cox process Hotelling game by
$(D, \eta, \rho_1, \ldots, \rho_N)$.

We can write a formula for $V_i(f_1, \ldots, f_N)$, based on equation
\eqref{eq:Valueformula}, by thinking of the points
of player $i$ as competing for
space in $D$ with the union of the points of the other players.
Thus we have:
\begin{equation}		\label{eq:value}
V_i(f_1, \ldots, f_N) = \int_{x \in D} f_i (x) \int_{y \in D} 
e^{ - \int_{u \in B(y \rightarrow x)} f(u) \eta(du)} \eta(dy) \eta(dx),
\end{equation}
where $f := \sum_{i=1}^N f_i$.

Note that we must have
\begin{equation}		\label{eq:conservationN}
\sum_{i=1}^N V_i(f_1, \ldots, f_N)  = \eta(D) (1 - e^{- \rho }), 
\end{equation}
where $\rho := \sum_{j=1}^N \rho_j$.
%This is because the total available value equals $|D|=1$ on the event that
%there is at least one point of a nonhomogeneous
%Poisson process with intensity function $f(x)$ in the region $D$ and equals
%zero otherwise, and because we
%have $\int_{x \in D} f(x) \eta(dx) = \rho |D|=\rho$.
This is because we have $\int_{x \in D} f(x) \eta(dx) = \rho$,
so the law of the total number of points is Poisson with 
mean $\rho$, and so,
by virtue of our assumption that $\eta$ is non-conflicting, the total value of all the players is $\eta(D)$
except on the event that there are no points in $\Phi$, in which case
the total value is $0$.
The formula in Equation \eqref{eq:conservationN} is established in Lemma \ref{lem:totalvalue} below.
%Before establishing this result, we first formally define
%the notion of Nash equilibrium for $N$-player Cox process
%Hotelling games.

\subsection{Nash equilibrium in $N$-player games}
\label{subsec:NEN-defn}

In view of Lemma \ref{lem:AbsContMeas},
any mixed strategy pair in an $N$-player Cox process Hotelling
game $(D, \eta, \rho_1, \ldots, \rho_N)$ can 
be written as 
$(f_1(M_1), \ldots, f_N(M_N))$, where 
$(M_j \in \mcM_j,  1 \le j \le N)$ are independent
random variables representing the randomizations used by 
the individual players in implementing their randomized strategies, and
$f_j(m_j) \in \mcC_j$ is the 
choice of action of player $j$ in case the realization of
her random variable $M_j$ is $m_j$.
%for $1 \le j \le N$.
The value of player $i$ in such a mixed strategy is 
$\mbbE[ V_i(f_1(M_1), \ldots, f_N(M_N))]$, the expectation being taken
with respect to the joint distribution of $(M_j, 1 \le j \le N)$, which are
independent. 
As a direct consequence 
of Equation \eqref{eq:conservationN}, we have
\begin{equation}		\label{eq:randconserveN}
\sum_{i=1}^N \mbbE[ V_i(f_1(M_1), \ldots, f_N(M_N))]
= \eta(D)|( 1 - e^{-\rho}),
\end{equation}
where $\rho :=  \sum_{i=1}^N \rho_i$. 

\begin{defi}			\label{def:NE-N}
The vector of independently randomized strategies 
\[
(f_1(M_1), \ldots, f_N(M_N)) \in \mcC_1 \times  \ldots \times \mcC_N
\]
is called a {\em Nash equilibrium} of the 
$N$-player game
if, for all $g_j \in \mcC_j$, $1 \le j \le N$, we
have, for all $1 \le i \le N$, 
\begin{equation}		\label{eq:NashEqForPlayeri}
\mbbE[V_i(f_1(M_1), \ldots, f_N(M_N))] \ge 
\mbbE[V_i(g_i, (f_j(M_j), j \neq i))].
\end{equation}
In words, each player $i$ perceives no advantage in playing the 
strategy $g_i$ instead of its randomized strategy $f_i(M_i)$, 
given that the other players, i.e. the players $j \neq i$, are playing the 
individually randomized strategies $(f_j(M_j), j \neq i)$.
\hfill  $\Box$
\end{defi}

\subsection{A conservation law}		\label{subsec:constantsum}

In order to 
establish the conservation law in Equation \eqref{eq:conservationN}
and its special case in Equation \eqref{eq:conservation2}, it suffices to
demonstrate that for all $\rho > 0$ and all 
$f \in \mcC(\rho)$, we have equation
\eqref{eq:tautology}
below.
%Thus, for Equation \eqref{eq:value}
%to be believable, it had better be the case that 
%\begin{equation}		\label{eq:sanity}
%\[
%\int_{x \in D} f (x) \int_{y \in D} 
%e^{ - \int_{u \in B(y \rightarrow x)} f(u) \eta(du)} \eta(dy) \eta(dx) = \eta(D) (1 - e^{- \rho }).
%\]
This is because the 
desired conservation laws would then follow immediately from 
equation
\eqref{eq:value} 
and its special case 
in Equation \eqref{eq:Valueformula},
respectively, by summing these formulas over the individual players.
%\end{equation}
We establish 
\eqref{eq:tautology}
%this 
formally in the following lemma.

\begin{lem}
\label{lem:totalvalue}
For any $\rho > 0$
%, let 
%\[
%\mcC(\rho) := \{f:D \to \mbbR_+ \mbox{ s.t. } \int_D f(x) \eta(dx) = \rho \}.
%\]
%Then for all
and $ f \in \mcC(\rho)$, we have:
\begin{equation}		\label{eq:tautology}
\int_{x\in D} f(x)
\int_{y\in D} e^{-\int_{B(y\to x)} f(u) \eta(du)} \eta(dy) \eta(dx)= \eta(D) \left(1-e^{-\rho }\right).
\end{equation}
\end{lem}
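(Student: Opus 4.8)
The plan is to recognize the left-hand side of \eqref{eq:tautology} as the expected total $\eta$-measure of the union of open Voronoi cells of a single Poisson process $\Phi$ with intensity $f\eta$, and then to evaluate this quantity directly using the non-conflicting hypothesis. Concretely, let $\Phi$ be a Poisson point process on $D$ with intensity measure $f\eta$. By Campbell's formula and Slivnyak's theorem — exactly the computation already carried out in Section \ref{subsec:valueformula} but now with a single process rather than the superposition of two — one has
\[
\mathbb{E}\Bigl[\sum_{x\in\Phi}\eta(W_\Phi(x))\Bigr]
= \int_{x\in D} f(x)\int_{y\in D} e^{-\int_{B(y\to x)} f(u)\,\eta(du)}\,\eta(dy)\,\eta(dx),
\]
so the left-hand side of \eqref{eq:tautology} equals $\mathbb{E}[\sum_{x\in\Phi}\eta(W_\Phi(x))]$. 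Since the open Voronoi cells $W_\Phi(x)$, $x\in\Phi$, are pairwise disjoint, the inner sum equals $\eta\bigl(\bigcup_{x\in\Phi}W_\Phi(x)\bigr)$ pointwise on the probability space.

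Next I would split according to whether $\Phi$ is empty. The number of points of $\Phi$ is Poisson with mean $\int_D f\,d\eta = \rho$, so $\mathbb{P}(\Phi=\emptyset)=e^{-\rho}$. On the event $\{\Phi=\emptyset\}$ the sum is $0$ by the convention that an empty sum is $0$. On the complementary event $\{\Phi\neq\emptyset\}$, which has probability $1-e^{-\rho}$, the non-conflicting property of $f\eta$ — which holds because $f\eta$ is absolutely continuous with respect to $\eta$, hence itself non-conflicting, as noted after Example \ref{exm:conflicting} — guarantees that $\eta\bigl(\bigcup_{x\in\Phi}W_\Phi(x)\bigr)=\eta(D)$ almost surely. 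Therefore
\[
\mathbb{E}\Bigl[\sum_{x\in\Phi}\eta(W_\Phi(x))\Bigr]
= \eta(D)\,\mathbb{P}(\Phi\neq\emptyset) + 0\cdot\mathbb{P}(\Phi=\emptyset)
= \eta(D)\bigl(1-e^{-\rho}\bigr),
\]
which is the right-hand side of \eqref{eq:tautology}.

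The only genuinely delicate point is the measurability/integrability bookkeeping behind the first display: justifying the use of Fubini's theorem (the map $(x,y)\mapsto e^{-\int_{B(y\to x)}f\,d\eta}$ must be shown jointly measurable and the iterated integral is manifestly finite, being bounded by $\eta(D)\int_D f\,d\eta = \rho\,\eta(D)$) and confirming that the Campbell–Slivnyak manipulation is legitimate verbatim with a single process. Both are already implicitly in hand from the derivation of \eqref{eq:Valueformula}, so I would simply invoke that derivation with $f_B\equiv 0$, or equivalently re-run it for $\Phi$ alone. I do not anticipate this being a real obstacle; the substantive content is the elementary probabilistic identity together with the non-conflicting hypothesis, and the formula \eqref{eq:tautology} is, as the authors suggest by naming it \texttt{tautology}, essentially a restatement of the conservation of $\eta$-mass among Voronoi cells.
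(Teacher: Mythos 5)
Your argument is correct and is essentially identical to the paper's first (probabilistic) proof: Campbell's formula and Slivnyak's theorem identify the left-hand side as $\mathbb{E}[\sum_{x\in\Phi}\eta(W_\Phi(x))]$, which equals $\eta(D)$ on the event $\{\Phi\neq\emptyset\}$ (of probability $1-e^{-\rho}$) by the non-conflicting hypothesis and $0$ otherwise. The paper also offers a second, purely analytical proof (integrating over $x$ first and recognizing the inner integral as the probability that the process has at least one point), but your route matches the first one.
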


\begin{proof}
We give two proofs of this formula.
The first one is probabilistic. Let $\Phi$ denote a Poisson process on
%{\color{magenta}{
$D$ with intensity measure $f \eta$.
%w.r.t. the Haar measure $\eta$.}}
%It consists in rewriting this integral as
By Campbell's formula and Slivnyak's theorem
\cite{Kallenberg},
the integral on the left hand side of Equation \eqref{eq:tautology} is just
$$ \mathbb{E} [\sum_{x \in \Phi} \eta(W_\Phi(x))],$$
where $W_\Phi(x)$ denotes the open Voronoi cell of $x \in \Phi$ 
with respect to the Poisson process $\Phi$.
If $\Phi(D)=0$, which happens with probability $e^{-\rho}$,
the sum is 0. On the complementary event the sum is $\rho(D)$, since, by virtue of the assumption that $\eta$ is non-conflicting, the collection
of sets $(W_\Phi(x), x \in \Phi)$ form a partition of $D$ up to a set of $\eta$-measure 0.

The second proof is analytical.
We have, for each $y \in D$,
\[
\int_{x \in D} f(x) e^{ - \int_{u \in B(y \rightarrow x)} f(u) \eta(du)}  \eta(dx) = 
1 - e^{- \rho },
\]
since, when $\eta$ is non-conflicting, the integral on the left hand side of the preceding equation is
just the probability that a nonhomogeneous
Poisson process with intensity function $f(x)$ with respect to $\eta$ has at least one
point in $D$ (to see this, think of moving out from $y$ by balls of
increasing radius till one covers all of $D$). 
Thus we have
\[
\int_{y \in D} \int_{x \in D} f(x)
e^{ - \int_{u \in B(y \rightarrow x)} f(u) \eta(du)} \eta(dy) \eta(dx) = 
\eta(D) (1 - e^{- \rho }).
\]
But the left hand side of the preceding equation equals the left hand
side of Equation \eqref{eq:tautology}, by an application of Fubini's theorem.

\end{proof}

\subsection{Constant strategies}		\label{subsec:constant}

Consider a $2$-player Cox process Hotelling game
$(D, \eta, \rho_A, \rho_B)$.
Write $\barrho_A$ for the
strategy of Alice where she chooses $f_A$ to be the constant
$\frac{\rho_A}{\eta(D)}$, and similarly write 
$\barrho_B$ for the
strategy of Bob where he chooses $f_B$ to be the constant
$\frac{\rho_B}{\eta(D)}$. Note that $\barrho_A \in \mcC_A$
and $\barrho_B \in \mcC_B$. 

The following useful lemma gives an explicit formula for the value 
obtained by Alice and that obtained by Bob in the $2$-player game 
$(D, \eta, \rho_A, \rho_B)$ when the players
play the constant strategies $\barrho_A$ and $\barrho_B$ respectively.
Note that, while the strategies are constant, the resulting intensities are
$\frac{\rho_A}{\eta(D)} \eta$ and $\frac{\rho_B}{\eta(d)} \eta$ for Alice and Bob respectively.

\begin{lem}
\label{lem:whenconstant2}
Consider a $2$-player Cox process Hotelling game $(D, \eta, \rho_A, \rho_B)$.
%a complete separable metric space 
%$D$, 
% which admits a transitive group of metric-preserving automorphisms.
%supporting a finite nonzero 
%Haar 
%measure $\eta$ on its Borel 
%Assume that $\eta$ is diffuse and non-conflicting.
%Write $\barrho_A$ for the 
%strategy of Alice where she chooses $f_A$ to be the constant
%$\frac{\rho_A}{\eta(D)}$, and similarly write 
%$\barrho_B$ for the
%strategy of Bob where he chooses $f_B$ to be the constant
%$\frac{\rho_B}{\eta(D)}$. Note that $\barrho_A \in \mcC_A$
%and $\barrho_B \in \mcC_B$.
Then $V_A(\barrho_A, \barrho_B) = \frac{\rho_A}{\rho} \eta(D) (1 - e^{- \rho})$ and 
$V_B(\barrho_A, \barrho_B) = \frac{\rho_B}{\rho} \eta(D) (1 - e^{- \rho})$,
where $\rho := \rho_A + \rho_B$.
\end{lem}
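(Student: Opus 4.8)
The plan is to feed the constant strategies directly into the value formula \eqref{eq:Valueformula} and then recognize the remaining double integral as something already evaluated in Lemma \ref{lem:totalvalue}. First I would note that $\barrho_A \in \mcC_A$ and $\barrho_B \in \mcC_B$, so the formula \eqref{eq:Valueformula} applies with $f_A \equiv \frac{\rho_A}{\eta(D)}$ and $f_B \equiv \frac{\rho_B}{\eta(D)}$. Since $f_A$ is constant it comes out of the outer integral, and since $f_A + f_B \equiv \frac{\rho}{\eta(D)}$ is constant (with $\rho := \rho_A + \rho_B$), the exponent simplifies to $-\frac{\rho}{\eta(D)}\,\eta(B(y \to x))$. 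This gives
\[
V_A(\barrho_A, \barrho_B) = \frac{\rho_A}{\eta(D)} \int_{x \in D} \int_{y \in D} e^{-\frac{\rho}{\eta(D)}\, \eta(B(y \to x))} \, \eta(dy)\, \eta(dx).
\]

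Next I would invoke Lemma \ref{lem:totalvalue} with the particular choice $f \equiv \frac{\rho}{\eta(D)} \in \mcC(\rho)$. Equation \eqref{eq:tautology} for this $f$ reads
\[
\frac{\rho}{\eta(D)} \int_{x \in D} \int_{y \in D} e^{-\frac{\rho}{\eta(D)}\, \eta(B(y \to x))} \, \eta(dy)\, \eta(dx) = \eta(D)\bigl(1 - e^{-\rho}\bigr),
\]
so the double integral above equals $\frac{\eta(D)^2}{\rho}\bigl(1 - e^{-\rho}\bigr)$. Substituting back yields $V_A(\barrho_A, \barrho_B) = \frac{\rho_A}{\eta(D)} \cdot \frac{\eta(D)^2}{\rho}\bigl(1 - e^{-\rho}\bigr) = \frac{\rho_A}{\rho}\eta(D)\bigl(1 - e^{-\rho}\bigr)$, as claimed.

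Finally, the formula for $V_B(\barrho_A, \barrho_B)$ follows by the identical argument with the roles of $\rho_A$ and $\rho_B$ interchanged (equivalently, by subtracting the expression for $V_A$ from the conservation law \eqref{eq:conservation2}). I do not anticipate any real obstacle here: the only things to be careful about are that the constant densities indeed lie in the respective strategy sets and that the inner integrand depends on the pair $(f_A, f_B)$ only through the sum $f_A + f_B$, which is what lets us recycle Lemma \ref{lem:totalvalue} rather than computing the geometric quantity $\int_D \int_D e^{-\frac{\rho}{\eta(D)}\eta(B(y\to x))}\,\eta(dy)\,\eta(dx)$ by hand.
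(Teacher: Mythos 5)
Your proposal is correct and follows essentially the same route as the paper's proof: substitute the constant strategies into Equation \eqref{eq:Valueformula}, factor out the constants, and apply Lemma \ref{lem:totalvalue} with $f \equiv \frac{\rho}{\eta(D)}$ to evaluate the remaining double integral. The only cosmetic difference is that the paper rewrites $\frac{\rho_A}{\eta(D)}$ as $\frac{\rho_A}{\rho}\cdot\frac{\rho}{\eta(D)}$ and invokes Fubini before citing Lemma \ref{lem:totalvalue}, whereas you evaluate the double integral first and substitute back; these are algebraically identical.
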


\bpf
We have 
\begin{eqnarray*}
V_A(\barrho_A, \barrho_B) &\stackrel{(a)}{=}& 
\int_{x \in D} \frac{\rho_A}{\eta(D)} \int_{y \in D} 
e^{ - \frac{\rho}{\eta(D)} \eta(B(y \rightarrow x))} \eta(dy) \eta(dx)\\
&=& \frac{\rho_A}{\rho} 
\int_{x \in D} \int_{y \in D} 
\frac{\rho}{\eta(D)} e^{ - \frac{\rho}{\eta(D)} \eta(B(y \rightarrow x))} \eta(dy) \eta(dx)\\
&\stackrel{(b)}{=}& \frac{\rho_A}{\rho} 
\int_{y \in D} \int_{x \in D} 
\frac{\rho}{\eta(D)} e^{ - \frac{\rho}{\eta(D)} \eta(B(y \rightarrow x))} \eta(dx) \eta(dy)\\
&\stackrel{(c)}{=}& \frac{\rho_A}{\rho} \eta(D) (1 - e^{- \rho}).
\end{eqnarray*}
Here step (a) is from equation
\eqref{eq:Valueformula},
%and $|B(y \rightarrow x)|$ on the right hand side
%of that Equation represents the $d$-dimensional volume of 
%$B(y \rightarrow x)$; 
step (b) is an application of Fubini's theorem,
and 
step (c) comes from Lemma \ref{lem:totalvalue}.
The formula for $V_B(\barrho_A, \barrho_B)$ results from
interchanging the roles of Alice and Bob in this calculation.
\epf

In an $N$-player Cox process Hotelling game
$(D, \eta, \rho_1, \ldots, \rho_M)$, write 
$\barrho_i$ for the
strategy of player $i$, $1 \le i \le N$, 
where she chooses $f_i$ to be the constant
$\frac{\rho_i}{\eta(D)}$. We then have the following analog of 
Lemma \ref{lem:whenconstant2}.
%for $N$-player games.

\begin{lem}
\label{lem:whenconstantN}
Consider an $N$-player Cox process Hotelling game 
$(D, \eta, \rho_1, \ldots, \rho_N)$.
Then, for all $1 \le i \le N$, we have
\[
V_i(\barrho_1, \ldots, \barrho_N) = \frac{\rho_i}{\rho} \eta(D) (1 - e^{- \rho}),
\]
where $\rho := \sum_{i=1}^N \rho_i$.
\end{lem}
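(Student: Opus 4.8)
The plan is to reduce Lemma \ref{lem:whenconstantN} to the $N=2$ computation already carried out in Lemma \ref{lem:whenconstant2}, by observing that the value formula \eqref{eq:value} depends on the strategies of the other players only through the sum $f := \sum_{j=1}^N f_j$. Concretely, when every player $j$ plays the constant strategy $\barrho_j$, i.e. $f_j \equiv \frac{\rho_j}{\eta(D)}$, the superposition has Radon--Nikodym derivative $f \equiv \frac{\rho}{\eta(D)}$, exactly as in the $2$-player case with total mass $\rho = \sum_j \rho_j$. So the effect on player $i$ is indistinguishable from a $2$-player game in which Alice is player $i$ with mass $\rho_i$ and Bob is a single ``aggregate'' opponent with mass $\rho - \rho_i$.

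First I would write out $V_i(\barrho_1,\ldots,\barrho_N)$ directly from \eqref{eq:value}:
\[
V_i(\barrho_1,\ldots,\barrho_N) = \int_{x\in D} \frac{\rho_i}{\eta(D)} \int_{y\in D} e^{-\frac{\rho}{\eta(D)}\eta(B(y\to x))}\,\eta(dy)\,\eta(dx),
\]
using that $\int_{B(y\to x)} f(u)\,\eta(du) = \frac{\rho}{\eta(D)}\eta(B(y\to x))$ when $f$ is the constant $\frac{\rho}{\eta(D)}$. Then I would factor out $\frac{\rho_i}{\rho}$ to obtain $\frac{\rho_i}{\rho}\int_{x}\int_{y}\frac{\rho}{\eta(D)} e^{-\frac{\rho}{\eta(D)}\eta(B(y\to x))}\,\eta(dy)\,\eta(dx)$, apply Fubini to swap the order of integration, and invoke Lemma \ref{lem:totalvalue} with the constant function $f \equiv \frac{\rho}{\eta(D)} \in \mcC(\rho)$ to evaluate the double integral as $\eta(D)(1-e^{-\rho})$. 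This yields $V_i(\barrho_1,\ldots,\barrho_N) = \frac{\rho_i}{\rho}\eta(D)(1-e^{-\rho})$, as claimed. This is essentially the chain of steps (a), (b), (c) in the proof of Lemma \ref{lem:whenconstant2} verbatim, just with $\rho_A$ replaced by $\rho_i$ and $\rho_A+\rho_B$ replaced by $\sum_j \rho_j$.

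Alternatively, and even more briefly, one can note that $\sum_{i=1}^N V_i(\barrho_1,\ldots,\barrho_N) = \eta(D)(1-e^{-\rho})$ by the conservation law \eqref{eq:conservationN} (Lemma \ref{lem:totalvalue}), and that the summand $\int_{x} \rho_i \int_{y} e^{-\frac{\rho}{\eta(D)}\eta(B(y\to x))}\eta(dy)\eta(dx)$ is proportional to $\rho_i$ with a proportionality constant independent of $i$; dividing the total in these proportions gives the per-player value. I would likely present the direct Fubini-plus-Lemma-\ref{lem:totalvalue} computation since it is self-contained and mirrors the earlier lemma.

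There is no real obstacle here; the only point requiring a word of care is the application of Fubini's theorem, which is justified because the integrand is nonnegative (Tonelli), so the interchange is unconditional. The constant function $\frac{\rho}{\eta(D)}$ indeed lies in $\mcC(\rho)$ since $\int_D \frac{\rho}{\eta(D)}\,\eta(dx) = \rho$, so Lemma \ref{lem:totalvalue} applies directly. I would keep the writeup to three or four lines, stating that the argument is identical to that of Lemma \ref{lem:whenconstant2} with the aggregate opponent interpretation, or simply reproducing the short display chain.
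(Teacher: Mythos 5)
Your proposal is correct and matches the paper's approach: the paper proves this lemma by noting it is the same computation as Lemma \ref{lem:whenconstant2}, starting from Equation \eqref{eq:value} instead of Equation \eqref{eq:Valueformula} and making the obvious modifications, which is precisely the display chain you write out. The Tonelli remark and the check that the constant function lies in $\mcC(\rho)$ are fine but not points of contention.
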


\bpf
The proof is similar to that of Lemma \ref{lem:whenconstant2},
when one starts with Equation \eqref{eq:value} instead of 
Equation \eqref{eq:Valueformula} and makes the obvious modifications.
\epf

\subsection{Concavity of the value}		\label{subsec:concavity}

To close this section, we record a convexity property that will
play a key role in establishing the main claims of this paper.

\begin{lem}
\label{lem:convexity2}
Consider a $2$-player Cox process Hotelling game $(D, \eta, \rho_A, \rho_B)$.
%Let $D$ be a complete separable metric measure space, and
% which admits a transitive group of metric-preserving automorphisms.
%$\eta$ denote a finite nonzero 
%Haar 
%measure on $D$. Let $\rho_A, \rho_B > 0$. 
Fix $f_B \in \mcC_B$
and $x \in D$. Then the mapping 
\[
f_A \mapsto \int_{y \in D} 
e^{ - \int_{B(y \rightarrow x)} (f_A(u) + f_B(u)) \eta(du) } \eta(dy)
\]
is strictly convex on $\mcC_A$, which, we recall, is a convex set.

As a consequence, $f_A \mapsto V_B(f_A, f_B)$,
 for fixed $f_B \in \mcC_B$, is strictly convex 
on $\mcC_A$, and hence $f_A \mapsto V_A(f_A, f_B)$,
 for fixed $f_B \in \mcC_B$, is strictly concave 
on $\mcC_A$.
%\[
%f_A \mapsto \int_{x \in D} f_B(x) \int_{y \in D} 
%e^{ - \int_{B(y \rightarrow x)} (f_A(u) + f_B(u)) \eta(du) } \eta(dy)
%\eta(dx)
%\]
%is strictly convex on $\mcC_A$. 
\end{lem}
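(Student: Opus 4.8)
The plan is to prove the first assertion directly from the exponential, and then deduce the statements about $V_B$ and $V_A$ by integration and the conservation law. Fix $f_B \in \mcC_B$ and $x \in D$. For each fixed $y \in D$, consider the functional
\[
f_A \mapsto -\int_{B(y \rightarrow x)} \bigl(f_A(u) + f_B(u)\bigr)\,\eta(du),
\]
which is \emph{affine} in $f_A$ (as a function on the convex set $\mcC_A \subset L^1(\eta)$). Composing an affine functional with the strictly convex function $t \mapsto e^{-t}$ on $\mbbR$ yields a convex function of $f_A$ for each $y$; integrating over $y \in D$ against the nonnegative measure $\eta(dy)$ preserves convexity. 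So the displayed mapping is convex on $\mcC_A$. This gives convexity almost for free; the real content is \emph{strictness}.

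For strictness, take $f_A^0 \neq f_A^1$ in $\mcC_A$ and $\lambda \in (0,1)$, and write $f_A^\lambda := (1-\lambda) f_A^0 + \lambda f_A^1$. Since convexity already holds pointwise in $y$, it suffices to exhibit a set of $y$'s of positive $\eta$-measure on which the inequality from Jensen/strict convexity of $e^{-t}$ is strict — i.e., on which
\[
\int_{B(y \rightarrow x)} f_A^0(u)\,\eta(du) \neq \int_{B(y \rightarrow x)} f_A^1(u)\,\eta(du).
\]
Because $f_A^0 \neq f_A^1$ in $L^1(\eta)$ and both integrate to $\rho_A$ over $D$, the signed measure $(f_A^0 - f_A^1)\eta$ is nonzero but has total mass $0$; hence there is a Borel set $E$ with $\int_E (f_A^0 - f_A^1)\,d\eta \neq 0$. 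The sets $B(y \to x) = \{ z : d(z,y) \le d(x,y)\}$ are closed balls around $y$ whose radius $d(x,y)$ varies with $y$; as $y$ ranges over $D$ these balls exhaust a rich enough family that the map $y \mapsto \int_{B(y\to x)}(f_A^0 - f_A^1)\,d\eta$ cannot be identically $0$. The cleanest argument I expect to use: if that map \emph{were} $\eta$-a.e.\ zero, then integrating against $f_A^\lambda(x)\eta(dx)$ and invoking the computation behind Lemma~\ref{lem:totalvalue} (moving outward from $y$ by balls of increasing radius) would force $f_A^0 = f_A^1$ in $L^1(\eta)$, a contradiction — this is where I anticipate the main obstacle, namely making precise that the family $\{B(y \to x)\}_{y}$ separates $L^1(\eta)$ functions with equal total mass, and handling the case $\eta(\{x\}) = 0$ versus degenerate metric situations. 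The non-conflicting hypothesis on $\eta$ and the fact that $f\eta$ is diffuse should be exactly what rules out pathologies.

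Given strict convexity of the bracketed map on $\mcC_A$ for every fixed $x$, multiply by $f_B(x) \ge 0$ and integrate over $x \in D$ against $\eta(dx)$: by Equation~\eqref{eq:value} (with $N=2$, the competitor of Bob being $f_A + f_B$) this is exactly $V_B(f_A, f_B)$, and a nonnegative integral of strictly convex functions is convex, with strictness surviving as long as $f_B$ is not $\eta$-a.e.\ zero — which holds since $\int_D f_B\,d\eta = \rho_B > 0$. Hence $f_A \mapsto V_B(f_A, f_B)$ is strictly convex on $\mcC_A$. Finally, by the conservation law \eqref{eq:conservation2}, $V_A(f_A, f_B) = \eta(D)(1 - e^{-\rho}) - V_B(f_A, f_B)$ for all $f_A \in \mcC_A$, so $f_A \mapsto V_A(f_A, f_B)$ is the negative of a strictly convex function plus a constant, hence strictly concave on $\mcC_A$. $\hfill\Box$
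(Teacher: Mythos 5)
Your argument is structurally the same as the paper's: convexity of the inner map comes from composing the affine functional $f_A \mapsto \int_{B(y\to x)}(f_A+f_B)\,d\eta$ with $t\mapsto e^{-t}$ and integrating in $y$; the second claim comes from integrating against $f_B(x)\eta(dx)$ via Equation \eqref{eq:Valueformula}; and the concavity of $V_A$ comes from the conservation law \eqref{eq:conservation2}. Those parts are correct and match the paper line for line. The one point of divergence is exactly the point you flag: strictness. The paper disposes of it in half a sentence (``this inequality is strict if $\theta\notin\{0,1\}$ and $f_A\ne f'_A$''), whereas you correctly isolate what is actually required --- a set of $y$ of positive $\eta$-measure on which $\int_{B(y\to x)}(f_A^0-f_A^1)\,d\eta\ne 0$ --- and concede that you have not proved it.

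That concession marks a genuine gap, not a technicality you failed to dispatch, and your sketched repair does not close it: knowing that $y\mapsto\int_{B(y\to x)}g\,d\eta$ vanishes for one fixed $x$ gives no purchase on the ``expanding balls'' computation behind Lemma \ref{lem:totalvalue}, which integrates over all $x$. Worse, the separation property you want is simply false for general $(D,d,\eta,x)$. Take $D$ to be three unit segments glued at a common endpoint $c$, with the path metric and Lebesgue measure (this $\eta$ is diffuse and non-conflicting); let $x$ lie on arm $1$ at distance $p>0$ from $c$, and let $g$ vanish on arm $1$, equal $h$ on arm $2$ and $-h$ on arm $3$, with $h$ supported within distance $p$ of $c$ and $\int h\,d\eta$ arbitrary of either sign. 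Then every ball $B(y\to x)$ meets arms $2$ and $3$ in initial segments that are either identical (when $y$ is on arm $1$) or both contain the support of $h$ (when $y$ is on arm $2$ or $3$), so $\int_{B(y\to x)}g\,d\eta=0$ for \emph{every} $y$, and the first claim of the lemma fails at that $x$; if moreover $f_B$ is supported on arm $1$ at distance at least $p$ from $c$, strict convexity of $V_B(\cdot,f_B)$ fails as well. So your proof is no less complete than the paper's, but the obstacle you identified is real: strictness rests on a separating property of the family $\{B(y\to x)\}_y$ that must be verified in the setting where the lemma is invoked (it does hold, for instance, on the circle and in the homogeneous settings of Section \ref{sec:Homogeneous}), and neither your sketch nor the paper's assertion supplies that verification in general.
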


\bpf
Let $f_A, f'_A \in \mcC_A$
and $\theta \in [0,1]$.
%, with $\bartheta := 1 - \theta$. 
Then we have
\begin{eqnarray*}
&&~ \int_{y\in D} e^{-\int_{B(y\to x)} (\theta f_A(u) + (1-\theta) f'_A(u) + f_B(u)) \eta(du)} \eta(dy)\\
&&~~~~  \le 
\theta
\int_{y\in D} e^{-\int_{B(y\to x)} (f_A(u)+ f_B(u)) \eta(du)} \eta(dy)\\
&&~~~~ +
(1-\theta)
\int_{y\in D} e^{-\int_{B(y\to x)} (f'_A(u)+ f_B(u)) \eta(du)} \eta(dy),
\end{eqnarray*}
from the convexity of the exponential function, and this inequality is
strict if $\theta \notin \{0,1\}$ and $f_A \neq f'_A$.
This proves the first claim of the lemma.

From Equation \eqref{eq:Valueformula}, we have
\begin{equation}		\label{eq:convexfunctional}
V_B(f_A, f_B) = \int_{x \in D} f_B(x) \int_{y \in D} 
e^{ - \int_{B(y \rightarrow x)} (f_A(u) + f_B(u)) \eta(du) } \eta(dy) \eta(dx).
\end{equation}
Since, for fixed $f_B \in \mcC_B$,
the inner integral is strictly convex on $\mcC_A$ for 
each $x \in D$, the overall integral is also strictly convex on $\mcC_A$,
which establishes the second claim of the lemma.

Finally, from the conservation rule in Equation \eqref{eq:conservation2},
we have $V_A(f_A, f_B) = \eta(D)(1 - e^{-\rho}) - V_B(f_A,f_B)$,
where $\rho := \rho_A + \rho_B$. From the second claim of the lemma,
the third claim now follows immediately.
\epf

The main claim of Lemma \ref{lem:convexity2} is the third one
about the strict concavity of the value function. This also holds
for $N$-player Cox process Hotelling games. We state this claim formally in the 
following lemma.

\begin{lem}
\label{lem:convexityN}
Consider an $N$-player Cox process Hotelling game 
$(D, \eta, \rho_1, \ldots, \rho_N)$.
Recall that the sets of pure actions, $\mcC_i$ for player $i$,
$1 \le i \le N$, as defined in Equation \eqref{eq:classj}, 
are convex subsets of $\mcM(D)$.

For each $1 \le i \le N$, the map 
$f_i \mapsto V_i(f_1, \ldots, f_N)$,
for fixed $f_j \in \mcC_j$, $j \neq i$, is strictly concave 
on $\mcC_i$.
\end{lem}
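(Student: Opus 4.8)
The plan is to reduce the $N$-player claim to the same exponential-convexity argument that proves Lemma~\ref{lem:convexity2}, invoking the $N$-player conservation law in place of the $2$-player one. Fix $i$, fix $f_j \in \mcC_j$ for $j \neq i$, and set $g := \sum_{j \neq i} f_j$, so that the total intensity when player $i$ plays $f_i$ is $f_i + g$. Writing down Equation~\eqref{eq:value} for a player $k \neq i$ gives
\[
V_k(f_1, \ldots, f_N) = \int_{x \in D} f_k(x) \int_{y \in D} e^{-\int_{B(y \to x)} (f_i(u) + g(u)) \eta(du)} \eta(dy) \eta(dx),
\]
and for fixed $x$ the inner integral is, by the strict convexity of the exponential applied pointwise in $u$ and then integrated in $y$, a strictly convex function of $f_i$ on the convex set $\mcC_i$ (the strictness coming from the fact that two distinct elements of $\mcC_i$ differ on a set of positive $\eta$-measure, exactly as in the proof of Lemma~\ref{lem:convexity2}). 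Hence $f_i \mapsto V_k(f_1,\ldots,f_N)$ is strictly convex on $\mcC_i$ for every $k \neq i$.

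Next I would sum these over all $k \neq i$: the map $f_i \mapsto \sum_{k \neq i} V_k(f_1, \ldots, f_N)$ is strictly convex on $\mcC_i$, being a sum of strictly convex functions (a sum of finitely many strictly convex functions is strictly convex, and here $N \geq 2$ so the sum is nonempty). Now invoke the conservation law of Equation~\eqref{eq:conservationN}, which gives
\[
V_i(f_1, \ldots, f_N) = \eta(D)(1 - e^{-\rho}) - \sum_{k \neq i} V_k(f_1, \ldots, f_N),
\]
with $\rho := \sum_{j=1}^N \rho_j$ a constant independent of $f_i$. Thus $f_i \mapsto V_i(f_1,\ldots,f_N)$ is a constant minus a strictly convex function, hence strictly concave on $\mcC_i$, which is the claim.

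There is essentially no serious obstacle here: the argument is a transcription of the $2$-player proof with the single bookkeeping change that the "opponent" now contributes an aggregate intensity $g = \sum_{j\neq i} f_j$ rather than a single $f_B$, and that one sums the convexity statement over the $N-1$ other players before applying conservation. The only point that deserves a word of care is the strictness: one should note that the pointwise inequality $e^{-\int_{B(y\to x)}(\theta f_i + (1-\theta)f_i' + g)\,d\eta} \le \theta e^{-\int_{B(y\to x)}(f_i + g)\,d\eta} + (1-\theta)e^{-\int_{B(y\to x)}(f_i' + g)\,d\eta}$ is strict for $\theta \in (0,1)$ precisely on the set of $y$ for which $\int_{B(y\to x)} f_i\,d\eta \neq \int_{B(y\to x)} f_i'\,d\eta$, and that when $f_i \neq f_i'$ in $\mcC_i$ this set has positive $\eta$-measure in $y$ (for any $x$ in the support, taking $y = x$ already gives $B(y \to x) = D$ and $\int_D f_i\,d\eta = \rho_i = \int_D f_i'\,d\eta$, so one must argue with balls strictly smaller than $D$; this is handled exactly as in Lemma~\ref{lem:convexity2}, whose proof we may cite). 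Since the paper states that the proof is "similar" and explicitly grants us the earlier lemmas, I would simply point to the proof of Lemma~\ref{lem:convexity2}, indicate the aggregation step and the use of Equation~\eqref{eq:conservationN}, and be done.
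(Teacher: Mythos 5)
Your proposal is correct and follows essentially the same route as the paper, which simply cites the third claim of Lemma \ref{lem:convexity2} after observing that player $i$'s value against the profile $(f_j, j\neq i)$ equals her value in a $2$-player game against a single opponent playing $\sum_{j\neq i} f_j$ with budget $\sum_{j\neq i}\rho_j$. Your version of summing the strictly convex maps $f_i \mapsto V_k$ over $k \neq i$ and then invoking the conservation law \eqref{eq:conservationN} is just an unbundled transcription of that same argument, and your remark on where the strictness of the exponential inequality actually bites is a fair (and correctly deferred) point of care.
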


\bpf
This is a direct consqeuence of the third claim of Lemma 
\ref{lem:convexity2}, once one observes that value of player $i$
when it plays $f_i$ in response to the actions $f_j$ of the players
$j \neq i$ in the $N$-player game is that same as its value when 
it plays $f_i$ in response to the pure action 
$\sum_{j \neq i} f_j$ of the opposing player in a $2$-player game
where the intensity constraint of the opposing player is
$\sum_{j \neq i} \rho_j$.
\epf

\section{Invariance under a transitive group action}		\label{sec:Homogeneous}

In this section, we consider the case where
%and the next, 
 $D$ is compact,
% over which the 
%Poisson Voronoi Colonel Blotto game is being played is a metric measure
%space admitting 
admits
a transitive group of 
%metric and measure preserving 
metric-preserving automorphisms,
and 
%Further, the underlying measure 
$\eta$ is 
%assumed to be 
invariant under this group of automorphisms.
%, i.e. , a Haar measure for this group.
This scenario covers several interesting
concrete cases, such as the metric tori derived from
lattice fundamental regions in $\mbbR^d$, with the metric of 
$\mbbR^d$ and Lebesgue measure; spheres of a fixed radius
with the associated uniform measure, which is 
%the Haar measure
invariant under the rigid 
rotations; etc.
%and, more generally, other homogeneous spaces.

\subsection{Exploiting concavity of the value}		\label{subsec:smoothing}

The following lemma, which it suffices to state 
in the $2$-player case,
%, where the players
%are Alice and Bob with respective net rates $\rho_A$ and $\rho_B$,
%and $\rho := \rho_A + \rho_B$, 
is the key technical result driving the 
game-theoretic results in this section.
%which are stated in Section \ref{sec:Equilibria}. 
%In the statement of Lemma \ref{lem:main},
%we write $\barrho_A$ for the homogeneous 
%strategy of Alice where she chooses $f_A$ to be the constant
%$\rho_A \eta(D)^{-1}$

\begin{lem}		\label{lem:main}
%{\color{magenta}{
Consider a $2$-player Cox process Hotelling game $(D, \eta, \rho_A, \rho_B)$ 
%on a compact complete separable metric measure space 
where $D$ is compact, admits a transitive group of metric-preserving automorphisms,
and $\eta$ is 
invariant under this group of automorphisms.
%a finite nonzero Haar measure on $D$.
%Assume that $\eta$ is diffuse and non-conflicting.
%write $\barrho_A$ for the homogeneous 
%strategy of Alice where she chooses $f_A$ to be the constant
%$\frac{\rho_A}{\eta(D)}$, and similarly write 
%$\barrho_B$ for the homogeneous 
%strategy of Bob where he chooses $f_B$ to be the constant
%$\frac{\rho_B}{\eta(D)}$. Note that $\barrho_A \in \mcC_A$
%and $\barrho_B \in \mcC_B$.
%}}
Then, for any strategy $f_A \in \mcC_A$ of Alice,
%satisfying $\int_{x \in D} f_A(x) \eta(dx) = \rho_A $, 
we have 
\[
V_A (f_A, \barrho_B) \le V_A (\barrho_A, \barrho_B) = \frac{\rho_A}{\rho}
 \eta(D) (1 - e^{- \rho }),
\]
where $\rho := \rho_A + \rho_B$. 
%Here $V_A (f_A, \rho_B)$
%denotes the value derived by Alice when playing the strategy $f_A$ 
%against the homogeneous strategy $\rho_B$ of Bob and 
%$V_A (\rho_A, \rho_B)$ denotes the value derived by Alice when playing
%the homogeneous strategy $\rho_A$ against the homogeneous 
%strategy $\rho_B$ of Bob. 
Further, we have the strict inequality
\[
V_A (f_A, \barrho_B) < V_A (\barrho_A, \barrho_B),
\]
except in the case $f_A = \barrho_A$.
\end{lem}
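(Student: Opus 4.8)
The plan is to exploit the strict concavity established in Lemma \ref{lem:convexity2} together with the symmetry hypothesis to run an averaging (``smoothing'') argument. Fix $f_A \in \mcC_A$ and let $G$ denote the transitive group of metric-preserving automorphisms of $D$ under which $\eta$ is invariant. For $g \in G$, write $f_A^g(x) := f_A(g^{-1}x)$ for the pushed-forward density; since $g$ preserves $\eta$ we have $f_A^g \in \mcC_A$. The first key claim is that the value is invariant under this group action: $V_A(f_A^g, \barrho_B) = V_A(f_A, \barrho_B)$ for every $g \in G$. This follows by the change of variables $x \mapsto gx$, $y \mapsto gy$, $u \mapsto gu$ in the formula \eqref{eq:Valueformula}, using that $g$ preserves $\eta$, that $B(gy \to gx) = g\,B(y\to x)$ because $g$ preserves the metric $d$, and that $\barrho_B$ is a constant function hence invariant under $g$.

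Next I would average over the group. There is a (unique) invariant probability measure $\mu$ on $G$ — the Haar measure, available because a transitive isometry group of a compact metric space is compact — and one forms $\widetilde{f}_A := \int_G f_A^g \, \mu(dg)$. By transitivity of $G$ and invariance of $\eta$, the averaged density $\widetilde{f}_A$ is constant $\eta$-a.e., and since it integrates to $\rho_A$ it must equal $\barrho_A = \rho_A/\eta(D)$ in $\mcC_A$. Now apply the strict concavity of $f_A \mapsto V_A(f_A,\barrho_B)$ on the convex set $\mcC_A$, in its integrated (Jensen) form:
\[
V_A(\barrho_A, \barrho_B) = V_A\!\left( \int_G f_A^g\, \mu(dg), \, \barrho_B \right) \ge \int_G V_A(f_A^g, \barrho_B)\, \mu(dg) = \int_G V_A(f_A, \barrho_B)\, \mu(dg) = V_A(f_A, \barrho_B),
\]
where the last two equalities use the $G$-invariance of the value proved above. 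This gives the inequality; the evaluation $V_A(\barrho_A,\barrho_B) = \frac{\rho_A}{\rho}\eta(D)(1-e^{-\rho})$ is just Lemma \ref{lem:whenconstant2}. For strictness, one uses that the concavity in Lemma \ref{lem:convexity2} is \emph{strict}: if $f_A \ne \barrho_A$ then the family $\{f_A^g\}_{g \in G}$ is not $\mu$-a.s. constant (if it were, $f_A$ would already equal $\barrho_A$), so Jensen's inequality is strict and $V_A(f_A,\barrho_B) < V_A(\barrho_A,\barrho_B)$.

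The main obstacle I anticipate is making the Jensen step rigorous at the level of generality of $\mcC_A$, since $\mcC_A$ is not closed in the weak topology and the ``concave functional evaluated at a barycenter'' statement needs the barycenter $\int_G f_A^g\,\mu(dg)$ to be well-defined as an element of $\mcC_A$ and to genuinely be the integral, in $L^1(\eta)$, of the family $g \mapsto f_A^g$. Establishing measurability of $g \mapsto f_A^g$ as a map into $L^1(\eta)$ and invoking a Bochner-integral or Fubini argument to identify $\widetilde f_A$ with the pointwise average (hence with $\barrho_A$), and then passing the strict concavity inequality through the integral rather than through a finite convex combination, is the technical heart; one clean route is to first prove the finite-average version (strict concavity over finitely many $g_i$) and then approximate, or to appeal directly to a Jensen inequality for strictly concave functionals on convex subsets of a Banach space together with the strictness characterization from Lemma \ref{lem:convexity2}. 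Everything else — the change-of-variables invariance of $V_A$ and the identification of the group average of an $L^1$ density on a homogeneous space with the constant density — is routine.
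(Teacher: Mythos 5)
Your proposal is correct and rests on the same two pillars as the paper's proof: the strict concavity of $f_A \mapsto V_A(f_A,\barrho_B)$ on the convex set $\mcC_A$ (Lemma \ref{lem:convexity2}) and the invariance of this value under the group action, $V_A(f_A^g,\barrho_B)=V_A(f_A,\barrho_B)$, obtained by the change of variables you describe. Where you differ is in how the symmetry is exploited. You average over the \emph{entire} group with respect to Haar measure and invoke an integrated (Jensen-type) form of strict concavity, which is exactly what forces you to confront Bochner integrability of $g\mapsto f_A^g$, the identification of the barycenter with $\barrho_A$, and a strict Jensen inequality for functionals on $L^1(\eta)$ --- the issues you rightly flag as the technical heart (to which one should add that the given transitive group need not be closed, so one should pass to its closure in the compact isometry group before speaking of Haar measure). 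The paper sidesteps all of this machinery with a two-point version of the same idea: if $f_A\neq\barrho_A$, pick a \emph{single} group element $g$ with $f_A^g\neq f_A$; then strict concavity gives $V_A(\tfrac12(f_A+f_A^g),\barrho_B) > \tfrac12 V_A(f_A,\barrho_B)+\tfrac12 V_A(f_A^g,\barrho_B) = V_A(f_A,\barrho_B)$, so no non-constant $f_A$ can be a maximizer of $V_A(\cdot,\barrho_B)$ over $\mcC_A$. What your heavier route buys is a \emph{direct} comparison: it yields $V_A(f_A,\barrho_B)\le V_A(\barrho_A,\barrho_B)$, with strictness when $f_A\neq\barrho_A$, without needing the supremum of $V_A(\cdot,\barrho_B)$ over the non-compact set $\mcC_A$ to be attained; the two-point argument, taken literally, only shows that a non-constant $f_A$ is not a maximizer, and converting that into the claimed strict inequality implicitly uses attainment of the maximum at $\barrho_A$. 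So either carry out the measurability and strict-Jensen steps as you outline (for instance via your suggested reduction to finite averages over sets of positive Haar measure), or adopt the paper's lightweight two-point trick and supply the small additional step needed to pass from ``not a maximizer'' to ``strictly suboptimal relative to $\barrho_A$.''
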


\bpf
%From Equation \eqref{eq:Valueformula}, we have 
From Lemma \ref{lem:whenconstant2}, we have
$V_A(\barrho_A, \barrho_B) = \frac{\rho_A}{\rho} \eta(D) (1 - e^{- \rho})$,
which is one of the claims of this lemma.
From Lemma \ref{lem:convexity2} for the 
choice $f_B = \barrho_B$, we conclude that 
$V_A(f_A, \barrho_B)$ is strictly concave on $\mcC_A$.
We now use this 
%assumption 
to conclude that 
%$\Psi(f_A)$ 
$V_A(f_A, \barrho_B)$  
is uniquely maximized over $\mcC_A$
by the choice $f_A = \barrho_A$. Indeed, if $f_A \in \mcC_A$, $f_A \neq \barrho_A$, then we can find a translate $f'_A$ of $f_A$ such that
$f_A \neq f'_A$. We have 
$V_A(f_A, \barrho_B) = V_A(f'_A,\barrho_B)$, because 
$f_A$ and $f'_A$ are translates of each other. However, 
since $f_A \neq f'_A$, we have
\[
V_A(\frac{1}{2}(f_A + f'_A), \barrho_B) > \frac{1}{2} V_A(f_A, \barrho_B) + \frac{1}{2} V_A(f'_A, \barrho_B) = V_A(f_A, \barrho_B).
\]
Hence, $f_A$ cannot be the maximizer of $V_A(f_A, \barrho_B)$
over $\mcC_A$ unless it is translation
invariant, i.e. unless it equals $\barrho_A$.
%the function in Equation \eqref{eq:convexfunctional}. 
%Further from any $f_A \in \mcC_A$, by averaging over its translates,
%we see that $V_A(f_A,\barrho_B) \ge V_A(\barrho_A, \barrho_B)$. 
This concludes the proof of the lemma.
\epf

\subsection{Nash equilibrium structure for $2$-player games}
\label{subsec:NE2-invariant}

%{\em Here the uniqueness of Poisson Nash equilibria will be derived for the $2$-player case and the $N$-player case for the abstract metric measure space framework (emphasizing the examples which are tori and lattice fundamental domains).}

We are now in a position to determine the Nash equilibrium 
structure of $2$-player Cox process Hotelling games 
$(D, \eta, \rho_A, \rho_B)$ in the context of this section.

%a nonzero Haar measure for this group.
%We first consider $2$-player games $(D, \eta, \rho_A, \rho_B)$
%and then the general case of 
%$N$ player games $(D, \eta, \rho_1, \ldots, \rho_N)$.

\begin{thm}			\label{thm:unique2}
Consider a $2$-player Cox process Hotelling game
$(D, \eta, \rho_A, \rho_B)$ between two 
players, Alice and Bob,
where $D$ is compact,
admits a transitive group of metric-preserving automorphisms,
and $\eta$ is
invariant under this group of automorphisms.
%on a  compact complete separable metric space $D$
%that admits a transitive group of metric-preserving automorphisms
%with $\eta$ being a nonzero Haar measure for this group.
%domain $D$, which is a 
%metric measure
%space admitting a transitive group of metric and measure preserving 
%automorphisms. 
% Let $\rho_A$ denote the total available intensity 
%per unit measure allowed to 
%for Alice and $\rho_B$ the total avaiable intensity 
%per unit measure allowed to 
%for Bob, with $\rho_A, \rho_B > 0$, and let $\mcC_A$ and $\mcC_B$ denote the 
%corresponding classes of measurable functions, as in equations \eqref{eq:classA} and \eqref{eq:classB} respectively. 
%By an abuse of notation, let 
%Let
%$\barrho_A \in \mcC_A$ denote the constant 
%function $\frac{\rho_A}{\eta(D)}$, which results in the constant intensity
%$\frac{\rho_A}{\eta(D)} \eta$ for Alice, 
%and let
%$\barrho_B \in \mcC_B$ denote the constant 
%function $\frac{\rho_B}{\eta(D)}$, which results in the constant intensity
%$\frac{\rho_B}{\eta(D)} \eta$ for Bob.
%intensity in $\mcC_B$.
Then $(\barrho_A, \barrho_B)$ is the unique Nash equilibrium for the game.

\end{thm}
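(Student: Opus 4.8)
\noindent
The plan is to combine the conservation law \eqref{eq:conservation2} with the sharp one-sided optimality statement in Lemma \ref{lem:main}, via the standard ``constant-sum'' collapsing argument. First I would observe that, since the game is constant-sum by \eqref{eq:randconserve2}, verifying that a strategy pair is a Nash equilibrium reduces to showing that neither player can \emph{strictly} improve against the candidate strategy of the opponent; because $V_A + V_B$ is a constant, Alice improving is the same as Bob getting worse, so it suffices to check the one-player deviation inequalities separately. I would then reduce mixed deviations to pure ones: by linearity of expectation, for any randomized $g_A(M_A)$, $\mbbE[V_A(g_A(M_A), \barrho_B)] = \mbbE\big[\,\mbbE[V_A(g_A(m_A),\barrho_B)]\,\big] \le \sup_{g_A \in \mcC_A} V_A(g_A, \barrho_B)$, so it is enough to bound $V_A(g_A,\barrho_B)$ over pure $g_A$. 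Note here the asymmetry: the candidate for Bob is the \emph{pure} strategy $\barrho_B$, so when testing Alice's deviations Bob is genuinely playing $\barrho_B$, and symmetrically.

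\noindent
The core step is then a direct invocation of Lemma \ref{lem:main}. That lemma gives, for every $f_A \in \mcC_A$,
\[
V_A(f_A, \barrho_B) \le V_A(\barrho_A, \barrho_B) = \frac{\rho_A}{\rho}\,\eta(D)(1 - e^{-\rho}),
\]
with strict inequality unless $f_A = \barrho_A$; and by interchanging the roles of Alice and Bob (the hypotheses on $D$ and $\eta$ are symmetric in the two players), for every $f_B \in \mcC_B$,
\[
V_B(\barrho_A, f_B) \le V_B(\barrho_A, \barrho_B) = \frac{\rho_B}{\rho}\,\eta(D)(1 - e^{-\rho}).
\]
Combining with the mixed-to-pure reduction of the previous paragraph, for any randomized deviation $g_A(M_A)$ of Alice we get $\mbbE[V_A(g_A(M_A),\barrho_B)] \le V_A(\barrho_A,\barrho_B)$, which is exactly \eqref{eq:NashEqForA} for the pair $(\barrho_A,\barrho_B)$ (here $f_A(M_A) = \barrho_A$ is deterministic), and similarly \eqref{eq:NashEqForB}. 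Hence $(\barrho_A,\barrho_B)$ is a Nash equilibrium.

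\noindent
For uniqueness I would argue by contradiction. Suppose $(f_A(M_A), f_B(M_B))$ is any Nash equilibrium. Using constant-sum again, $\mbbE[V_A(f_A(M_A),f_B(M_B))] + \mbbE[V_B(f_A(M_A),f_B(M_B))] = \eta(D)(1-e^{-\rho})$, while from the Nash property against the specific deviations $g_A = \barrho_A$ and $g_B = \barrho_B$ one obtains $\mbbE[V_A(f_A(M_A),f_B(M_B))] \ge \mbbE[V_A(\barrho_A, f_B(M_B))]$ and $\mbbE[V_B(f_A(M_A),f_B(M_B))] \ge \mbbE[V_B(f_A(M_A), \barrho_B)]$. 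I would want to play the symmetric version of Lemma \ref{lem:main} against a \emph{randomized} opponent: conditioning on $M_B = m_B$ and applying the Alice-vs-$\barrho_B$ statement will not directly help since $f_B(m_B) \ne \barrho_B$ in general, so the genuinely needed input is a strengthening of Lemma \ref{lem:main} — namely that $V_A(\barrho_A, f_B) \ge V_A(f_A, f_B)$ for \emph{every} pure $f_B$ and $f_A$, i.e. that $\barrho_A$ is a best response to an \emph{arbitrary} strategy of Bob, with equality characterization. \textbf{This is the main obstacle}: the version of Lemma \ref{lem:main} stated in the excerpt only treats Bob playing the constant $\barrho_B$. I would resolve it by re-running the smoothing argument of Lemma \ref{lem:main} with a general $f_B$ in place of $\barrho_B$: strict concavity of $f_A \mapsto V_A(f_A, f_B)$ (Lemma \ref{lem:convexity2}) still holds, but the symmetrization step must instead average $V_A(\sigma f_A, \sigma f_B)$ over the group $\sigma$, using the automorphism-invariance of $\eta$ and of the value functional, to conclude that the group-average of $f_A$ — which is $\barrho_A$ — beats $f_A$ against the group-average of $f_B$; combined with Jensen over $M_B$ this forces any equilibrium strategy of Alice to satisfy $\mbbE[V_A(f_A(M_A),f_B(M_B))] \le V_A(\barrho_A, \mbbE[f_B(M_B)]) \le V_A(\barrho_A,\barrho_B)$, and then the conservation law pins down all values and the strict-concavity/strict-inequality clauses force $f_A(M_A) = \barrho_A$ and $f_B(M_B) = \barrho_B$ almost surely. (If the paper intends to prove uniqueness only after developing this stronger invariance lemma separately, I would simply cite that; the structure of the argument is unchanged.)
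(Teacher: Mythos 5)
Your existence argument is fine (and is in fact the easy half: the paper's Definition \ref{def:NE-2} only requires checking pure deviations, so Lemma \ref{lem:main} applied to each player against the other's constant strategy settles it immediately). The problem is in your uniqueness argument. You correctly identify that Lemma \ref{lem:main} as stated only controls deviations against a \emph{constant} opponent, but the strengthening you propose --- that $\barrho_A$ is a best response to an \emph{arbitrary} $f_B$ --- is not what the paper uses, and your proposed derivation of it is broken. Averaging $V_A(\sigma f_A,\sigma f_B)$ over the group and ``pulling the average inside'' would require $V_A$ to be jointly concave in $(f_A,f_B)$; but Lemma \ref{lem:convexity2} shows $V_A$ is strictly \emph{convex} in $f_B$ for fixed $f_A$, so joint concavity fails. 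Worse, even granting that step, the inequality it would produce is $V_A(\barrho_A,\barrho_B)\ge V_A(f_A,f_B)$ for all $(f_A,f_B)$ --- a comparison against a \emph{different} opponent strategy, not a best-response statement --- and this inequality is false: applying it symmetrically to Bob and summing via the conservation law \eqref{eq:conservation2} would force $V_A(f_A,f_B)$ to be constant over all strategy pairs, contradicting the strict inequality in Lemma \ref{lem:main} itself.

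The paper closes the gap without any strengthening of Lemma \ref{lem:main}, by routing through the conservation law in the opposite direction from the one you tried. Suppose $(f_A(M_A),f_B(M_B))$ is a Nash equilibrium with $\mbbP(f_B(M_B)\neq\barrho_B)>0$. Apply Lemma \ref{lem:main} with the roles swapped, i.e.\ to \emph{Bob deviating against Alice's constant strategy} $\barrho_A$: this gives $\mbbE[V_B(\barrho_A,f_B(M_B))]<V_B(\barrho_A,\barrho_B)$, and the conservation law flips this into the strict \emph{lower} bound $\mbbE[V_A(\barrho_A,f_B(M_B))]>V_A(\barrho_A,\barrho_B)$. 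Alice's best-response inequality $\mbbE[V_A(f_A(M_A),f_B(M_B))]\ge\mbbE[V_A(\barrho_A,f_B(M_B))]$ then shows her equilibrium value strictly exceeds $\tfrac{\rho_A}{\rho}\eta(D)(1-e^{-\rho})$; the symmetric argument does the same for Bob, and summing contradicts \eqref{eq:randconserve2}. (The degenerate case where one player already plays the constant a.s.\ is handled directly by Lemma \ref{lem:main} plus the best-response property.) Note that at no point is $V_A(f_A,f_B)$ for two non-constant arguments ever compared to anything: every invocation of Lemma \ref{lem:main} has the constant strategy in one slot. Your proof as written does not reach this conclusion and would need to be restructured along these lines.
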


\bpf

We need to show that if $(f_A(M_A), f_B(M_B))$ 
is a Nash equilibrium for the 
game, then $f_A(M_A) = \barrho_A$ and 
$f_B(M_B) = \barrho_B$
with probability $1$.

Suppose first that $f_B(M_B) = \barrho_B$ with probability $1$. 
If $\mbbP(f_A(M_A) \neq \barrho_A) > 0$ then,
by Lemma \ref{lem:main}, we have 
\[
\mbbE[V_A(f_A(M_A),f_B(M_B))] = \mbbE[V_A(f_A(M_A), \barrho_B)] 
< V_A(\barrho_A, \barrho_B).
\]
On the other hand, since $f_A(M_A)$ is a best response by Alice to the 
strategy $f_B(M_B)$ of Bob, we have
\[
\mbbE[V_A(f_A(M_A),f_B(M_B))] \ge \mbbE[V_A(\barrho_A,f_B(M_B))]
= V_A(\barrho_A, \barrho_B).
\]
This contradiction establishes that if $f_B(M_B) = \barrho_B$ with probability $1$ then we must have $f_A(M_A) = \barrho_A$ with 
probability $1$.
%which establishes the theorem in this case.
A similar argument works to show that if $f_A(M_A) = \barrho_A$ with 
probability $1$ then we must have $f_B(M_B) = \barrho_B$ with probability $1$. 

Thus, it remains to handle the case where 
we have both $\mbbP(f_A(M_A) \neq \barrho_A) > 0$ and 
$\mbbP(f_B(M_B) \neq \barrho_B) > 0$.
In this case, since $f_A(M_A)$ is a best response by Alice to the 
strategy $f_B(M_B)$ of Bob, we have
\begin{equation}		\label{eq:abound}
\mbbE[V_A(f_A(M_A),f_B(M_B))] \ge \mbbE[V_A(\barrho_A,f_B(M_B))],
\end{equation}
and, since $f_B(M_B)$ is a best response by Bob to the 
strategy $f_A(M_A)$ of Alice, we have
\begin{equation}		\label{eq:bbound}
\mbbE[V_B(f_A(M_A),f_B(M_B))] \ge \mbbE[V_B(f_A(M_A), \barrho_B)].
\end{equation}
Now, since $\mbbP(f_B(M_B) \neq \barrho_B) > 0$, by Lemma \ref{lem:main}
we have 
\[
\mbbE[V_B(\barrho_A,f_B(M_B))] < V_B(\barrho_A, \barrho_B),
\]
so that, by Equation \eqref{eq:randconserve2}, we have 
\[
\mbbE[V_A(\barrho_A,f_B(M_B))] > V_A(\barrho_A, \barrho_B).
\]
Combining this with Equation \eqref{eq:abound}, we get
\[
\mbbE[V_A(f_A(M_A),f_B(M_B))]  > V_A(\barrho_A,\barrho_B).
\]
Similar reasoning, based on Equation \eqref{eq:bbound}, gives
\[
\mbbE[V_B(f_A(M_A),f_B(M_B))]  > V_B(\barrho_A,\barrho_B).
\]
However, putting these inequalities together contradicts the conservation law in equation
\eqref{eq:randconserve2}. This completes the proof of the theorem.

\epf

\subsection{Nash equilibrium structure for $N$-player games}
\label{subsec:NEN-invariant}

Theorem \ref{thm:unique2} is actually a special case of a
uniqueness theorem for Nash equilibria in the general $N$-player case.
%However, it has been highlighted to bring out the 
%basic structure of the proof. 
The proof in the $N$-player case also
depends on a peculiar feature of Cox process Hotelling
games, which is that a player faced with the strategies of the other players,
i.e. their individual choices of likelihood functions with respect to the underlying measure $\eta$ which result in their individual intensities,
receives the same value as she would in a $2$-player game where she is
faced with a single player playing a likelihood with respect to the underlying measure $\eta$ that results in an intensity equal to the sum of the intensities corresponding
to the strategies of the other players. With this in mind, we turn now to the
$N$-player case.

\begin{thm}			\label{thm:uniqueN}
Consider an $N$-player Cox process Hotelling game
$(D, \eta, \rho_1, \ldots, \rho_N)$
where $D$ is compact,
admits a transitive group of metric-preserving automorphisms,
and $\eta$ is
invariant under this group of automorphisms.
%on a compact complete separable metric space $D$
%that admits a transitive group of metric-preserving automorphisms
%with $\eta$ being a nonzero Haar measure for this group.
%on a domain $D$ which is a 
%metric measure
%space admitting a transitive group of metric and measure preserving 
%automorphisms.  
%Let $\rho_j$ denote the total available intensity
%per unit measure allowed to 
%of player $j$, and let $\mcC_j$ denote the 
%corresponding class of measurable functions, as in Equation \eqref{eq:classj},
%for $1 \le j \le N$.
%By an abuse of notation, let 
Let
$\barrho_j \in \mcC_j$ denote the constant function 
$\frac{\rho_j}{\eta(D)}$, which results in the constant intensity
$\frac{\rho_j}{\eta(D)} \eta$ for player $j$.

Then $(\barrho_1, \ldots, \barrho_N)$ is the unique Nash equilibrium for this game.

\end{thm}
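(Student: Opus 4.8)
The plan is to reduce the $N$-player uniqueness statement to the $2$-player uniqueness statement of Theorem~\ref{thm:unique2}, exploiting the ``aggregation'' feature already noted in Lemma~\ref{lem:convexityN}: player $i$'s value $V_i(f_1,\dots,f_N)$ depends on the opponents only through the aggregate likelihood $\sum_{j\neq i} f_j$, and equals her value in a $2$-player game $(D,\eta,\rho_i,\sum_{j\neq i}\rho_j)$ against a single opponent playing that aggregate. So first I would record the analog of Lemma~\ref{lem:main} in the $N$-player setting: for fixed opponent strategies $f_j\in\mcC_j$, $j\neq i$, the map $f_i\mapsto V_i(f_i,(f_j)_{j\neq i})$ is strictly concave on $\mcC_i$ (this is exactly Lemma~\ref{lem:convexityN}), and moreover when each opponent plays the constant strategy $\barrho_j$, the aggregate opponent intensity is constant, so by Lemma~\ref{lem:main} applied to the $2$-player game against intensity $\sum_{j\neq i}\rho_j$, we get $V_i(f_i,(\barrho_j)_{j\neq i}) < V_i(\barrho_i,(\barrho_j)_{j\neq i})$ whenever $f_i\neq\barrho_i$, with the maximum value $\frac{\rho_i}{\rho}\eta(D)(1-e^{-\rho})$ by Lemma~\ref{lem:whenconstantN}.

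Next I would run the same contradiction scheme as in the proof of Theorem~\ref{thm:unique2}, suitably generalized. Suppose $(f_1(M_1),\dots,f_N(M_N))$ is a Nash equilibrium. Let $S=\{i : \mbbP(f_i(M_i)\neq\barrho_i)>0\}$ be the set of players who do not play the constant strategy almost surely. If $S$ is empty we are done. Otherwise, for each $i\in S$, using that $f_i(M_i)$ is a best response and comparing against the deviation $\barrho_i$, together with the strict-concavity/translation-invariance argument of Lemma~\ref{lem:main}, I want to show $\mbbE[V_i(f_1(M_1),\dots,f_N(M_N))] > V_i(\barrho_1,\dots,\barrho_N)$; and for each $i\notin S$, I want $\mbbE[V_i(\cdots)] \ge V_i(\barrho_1,\dots,\barrho_N)$. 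Summing over all $i$ and invoking the conservation law \eqref{eq:randconserveN} together with $\sum_i V_i(\barrho_1,\dots,\barrho_N) = \eta(D)(1-e^{-\rho})$ (Lemma~\ref{lem:whenconstantN}) then yields a contradiction, forcing $S=\emptyset$.

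The delicate point, and the place I expect the real work, is establishing the strict inequality $\mbbE[V_i(\cdots)] > V_i(\barrho_1,\dots,\barrho_N)$ for $i\in S$. In the $2$-player proof this came via the conservation law: if Bob randomizes nontrivially, then Alice's deviation to $\barrho_A$ already beats the symmetric value, because $\mbbE[V_B(\barrho_A,f_B(M_B))] < V_B(\barrho_A,\barrho_B)$ forces $\mbbE[V_A(\barrho_A,f_B(M_B))] > V_A(\barrho_A,\barrho_B)$. The $N$-player analog requires care: I would fix $i\in S$, pick some $k\in S$ with $k\neq i$ if one exists, and argue that player $i$'s deviation to $\barrho_i$, holding the others at $(f_j(M_j))_{j\neq i}$, gives her strictly more than $V_i(\barrho_1,\dots,\barrho_N)$ — intuitively because the opponents' aggregate is a nontrivial randomization around $\barrho_{-i}$ and, by the strict concavity of the \emph{opponents'} collective loss (equivalently strict convexity of $f_i\mapsto \sum_{j\neq i}V_j$, which follows from Lemma~\ref{lem:convexity2} applied in the reverse role), the player exploiting against a mixture does better than against the mean. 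The edge case $S=\{i\}$ (only one player randomizes) must be handled separately: there the opponents play $\barrho_{-i}$ deterministically, Lemma~\ref{lem:main} gives $\mbbE[V_i(f_i(M_i),\barrho_{-i})] < V_i(\barrho_i,\barrho_{-i})$, contradicting that $f_i(M_i)$ is a best response (which would have to match the value of the deviation $\barrho_i$). Assembling these cases — exactly mirroring the three-case split in the proof of Theorem~\ref{thm:unique2} — completes the argument, and I would expect the write-up to be essentially a bookkeeping-heavy generalization of that proof with Lemma~\ref{lem:convexityN} and Lemma~\ref{lem:whenconstantN} substituted for their $2$-player counterparts.
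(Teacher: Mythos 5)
Your plan follows the paper's proof of Theorem \ref{thm:uniqueN} essentially step for step: treat the opponents of player $i$ as a single aggregate player in a $2$-player game, apply Lemma \ref{lem:main} there, derive the strict inequality $\mbbE[V_i(\barrho_i,(f_j(M_j),j\neq i))] > V_i(\barrho_1,\ldots,\barrho_N)$ for every player who deviates from the constant strategy with positive probability and the weak inequality for the others, and then contradict the conservation law \eqref{eq:randconserveN}. Two small repairs are needed in your sketch. First, the correct case split is not on whether some other player $k\neq i$ lies in $S$, but on whether the \emph{aggregate} $\sum_{j\neq i}f_j(M_j)$ equals $\sum_{j\neq i}\barrho_j$ almost surely --- individual deviations can cancel in the sum --- and your own edge-case argument (if the aggregate is a.s.\ constant, Lemma \ref{lem:main} forces $f_i(M_i)=\barrho_i$, contradicting $i\in S$) is precisely the paper's first step and already covers this. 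Second, the mechanism for the strict inequality is not convexity of $f_i\mapsto\sum_{j\neq i}V_j$ in player $i$'s action, nor Jensen over the randomization alone (which says nothing when the aggregate is deterministic but non-constant as a function on $D$); it is Lemma \ref{lem:main} applied realization-by-realization to the aggregate opponent playing against $\barrho_i$, followed by the conservation law to flip the sign --- which is exactly what the paper does.
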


\bpf

We need to show that if $(f_1(M_1), \ldots, f_N(M_N))$ is a 
Nash equilibrium, then $\mbbP(f_j(M_j) = \barrho_j) = 1$ for all
$1 \le j \le N$. To do this, suppose first, after reindexing if needed,
that we have $\mbbP(f_1(M_1) \neq \barrho_1) > 0$ and 
$\mbbP(\sum_{j=2}^N f_j(M_j) = \sum_{j=2}^N \barrho_j) = 1$.
Then, because $f_1(M_1)$ is a best reaction of player $1$ 
to the individually randomized strategies $(f_j(M_j), 2 \le j \le N)$
of the other players, we have
\begin{equation}		\label{eq:constass-1}
\mbbE[V_1(f_1(M_1), f_2(M_2), \ldots, f_N(M_N))] \ge 
\mbbE[V_1(\barrho_1, f_2(M_2), \ldots, f_N(M_N))].
\end{equation}
On the other hand, by Lemma \ref{lem:main}, we have 
\begin{equation}		\label{eq:constass-2}
\mbbE[V_1(f_1(M_1), f_2(M_2), \ldots, f_N(M_N))] < 
\mbbE[V_1(\barrho_1, f_2(M_2), \ldots, f_N(M_N))].
\end{equation}
To see this, observe that 
$V_1(f_1(M_1), f_2(M_2), \ldots, f_N(M_N))$ is the same
as the value of player $1$ in the $2$-player game in which she plays the randomized
strategy $f_1(M_1)$ against a single opponent playing the
strategy $\sum_{j=2}^N f_j(M_j)$, which we have assumed equals
the constant $\sum_{j=2}^N \barrho_j$ with probability $1$, and 
also $V_1(\barrho_1, f_2(M_2), \ldots, f_N(M_N))$ is the same
as the value of player $1$ in the $2$-player game in which she plays 
the constant strategy $\barrho_1$ against a single opponent playing the
strategy $\sum_{j=2}^N f_j(M_j)$, which we have assumed equals
the constant $\sum_{j=2}^N \barrho_j$ with probability $1$, and 
so the scenario of Lemma \ref{lem:main} applies to allow us to 
compare these two values. Note that equations
\eqref{eq:constass-1} and \eqref{eq:constass-2} contradict each other.
Thus, we can conclude that if $(f_1(M_1), \ldots, f_N(M_N))$ is a 
Nash equilibrium then for every player $1 \le i \le N$ for which
$\mbbP(f_i(M_i) \neq \barrho_i) > 0$, we must also have 
$\mbbP(\sum_{j \neq i} f_j(M_j) \neq \sum_{j \neq i} \barrho_j) > 0$.

Suppose now, after reindexing if necessary, that 
$\mbbP(f_1(M_1) \neq \barrho_1) > 0$. We have established that
we must also have
$\mbbP(\sum_{j=2}^N f_j(M_j) \neq \sum_{j=2}^N \barrho_j) > 0$.
Since $f_1(M_1)$ is a best reaction of player $1$ 
to the individually randomized strategies $(f_j(M_j), 2 \le j \le N)$
of the other players, the inequality in Equation \eqref{eq:constass-1} holds. 
Because $\mbbP(\sum_{j=2}^N f_j(M_j) \neq \sum_{j=2}^N \barrho_j) > 0$,
from Lemma \ref{lem:main} also have 
\begin{equation}		\label{eq:nonconst-1}
\sum_{j=2}^N \mbbE[ V_j(\barrho_1, f_2(M_2), \ldots, f_N(M_N))]
< \sum_{j=2}^N V_j(\barrho_1, \barrho_2, \ldots, \barrho_N).
\end{equation}
To see this, note that the sum of the values of the other players 
$2 \le j \le N$, when player $1$ plays the constant strategy
$\barrho_1$, is the same as the value of a single player playing the 
strategy $\sum_{j=2}^N f_j(M_j)$ against player $1$ playing the 
constant strategy $\barrho_1$ in a $2$-player game between player $1$ and this single player, where the overall intensity constraint of player $1$ continues to be $\rho_1$ and that of this single player is 
$\sum_{j=2}^N \rho_j$. Since $\mbbP(\sum_{j=2}^N f_j(M_j) \neq \sum_{j=2}^N \barrho_j) > 0$, Lemma \ref{lem:main} allows us to 
conclude that this value is strictly less the value this single player
would get by playing the constant strategy
$\sum_{j=2}^N \barrho_j$ against player $1$, who is playing the constant
strategy $\barrho_1$. But this is equal to the sum of the values of 
individual players $2 \le j \le N$ in the given $N$-player game 
when they individually play the 
constant strategies $(\barrho_j, 2 \le j \le N)$ respectively, and player
$1$ is playing the constant strategy $\barrho_1$.

Now, in view of the conservation law in Equation \eqref{eq:randconserveN}, we can conclude from
Equation \eqref{eq:nonconst-1} that 
\[
\mbbE[V_1(\rho_1, f_2(M_2), \ldots, f_N(M_N))]
> V_1(\barrho_1, \barrho_2, \ldots, \barrho_N).
\]
Thus, so far, what we have concluded is that
if $(f_1(M_1), \ldots, f_N(M_N))$ is a 
Nash equilibrium, then,
every $1 \le i \le N$ such that
$\mbbP(f_i(M_i) \neq \barrho_i) > 0$, we must have
\begin{equation}		\label{eq:nonconst-2}
\mbbE[V_i(\barrho_i, (f_j(M_j), j \neq i))]
> V_i(\barrho_1, \barrho_2, \ldots, \barrho_N).
\end{equation}

Finally, suppose that 
$\mbbP(f_i(M_i) = \barrho_i) = 1$ for some $1 \le i \le N$.
Then we must
\begin{equation}		\label{eq:easycase}
\mbbE[V_i(\barrho_i, (f_j(M_j), j \neq i))]
\ge V_i(\barrho_1, \barrho_2, \ldots, \barrho_N).
\end{equation}
To see this note that the
$\sum_{k \neq i} \mbbE[V_k(\barrho_i, (f_j(M_j), j \neq i))]$
is the same as the value of a single player who plays the strategy
$\sum_{j \neq i} f_j(M_j)$ in the $2$-player game against player 
$i$ playing the constant strategy $\barrho_i$ and, by Lemma \ref{lem:main}
this is no bigger that the value this single player would get if she played
the constant strategy $\sum_{j \neq i} \barrho_j$, but this value is 
the same as $\sum_{j \neq i} V_j(\barrho_1, \barrho_2, \ldots, \barrho_N)$.
To conclude Equation \eqref{eq:easycase} from this logic, apply the
conservation rule in Equation \eqref{eq:randconserveN}.

The inequalities in equations \eqref{eq:nonconst-2} and \eqref{eq:easycase}
together result in a contradiction of Equation \eqref{eq:randconserveN} 
unless we have $\mbbP(f_i(M_i) = \barrho_i) = 1$ for all $1 \le i \le N$.
This concludes the proof of the theorem.

\epf

\section{General results}		\label{sec:General}

%{\color{magenta}{
%Here the complexity of the situation when there is no transitive group of measure preserving transformations will 
%be brought out in examples. This will indicate that there are many open directions for other researchers to pursue.

%Note: Since we have not put any bound on the intensity, the nature of the
%counterexample will be that there is no Nash equilibrium. Comments can 
%be made in this section about the formulation where the intensities are
%bounded.
%}}

In this section we discuss 
%our results on 
the structure of
Nash equilibria in a general $N$-player Cox process Hotelling game
$(D, \eta, \rho_1,\ldots, \rho_N)$ without the 
%homogeneity assumptions
group-theoretic assumptions
of Section \ref{sec:Homogeneous}.

\subsection{Structure of the Nash equilibria, assuming one exists}
\label{subsec:GenStruct}

%As in the preceding discussion, we will first discuss the case of
%$2$-player games $(D, \eta, \rho_A, \rho_B)$ and then state the 
%corresponding results in the $N$-player case.

Leaving aside for the moment the question of existence of Nash equilibria,
the strict concavity of the value function of a player for fixed choices of the pure actions of the other players, which was established in 
%the $2$-player case in 
Lemmas \ref{lem:convexity2} and \ref{lem:convexityN},  ensures that
any Nash equilibrium that exists must be pure.
%even in the $N$-player case. 
We discuss this first for the $2$-player case.

\begin{thm} 		 \label{thm:purity2}
Consider a $2$-player Cox process Hotelling game
$(D, \eta, \rho_A, \rho_B)$ between the
players Alice and Bob. 
Suppose $(f_A(M_A), f_B(M_B)) \in \mcC_A \times \mcC_B$
is a Nash equilibrium of the game, as defined in 
Definition \ref{def:NE-2}. Then there exist
$g_A \in \mcC_A$ and $g_B \in \mcC_B$ such that 
\[
\mbbP((f_A(M_A), f_B(M_B)= (g_A, g_B)) = 1.
\]
\end{thm}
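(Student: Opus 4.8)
The plan is to show that in any Nash equilibrium $(f_A(M_A), f_B(M_B))$ each player's randomized strategy is almost surely equal to a single deterministic action, namely its own mean, so that the theorem follows with $g_A, g_B$ taken to be those means. To this end I would set $\bar{g}_A(x) := \mbbE[f_A(M_A)(x)]$ for $\eta$-a.e.\ $x$; by Tonelli this is a nonnegative measurable function with $\int_D \bar{g}_A\,\eta(dx) = \mbbE[\int_D f_A(M_A)\,\eta(dx)] = \rho_A$, so $\bar{g}_A \in \mcC_A$, and $\bar{g}_A$ is the mean (in $L^1(\eta)$) of the $\mcC_A$-valued random variable $f_A(M_A)$. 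Define $\bar{g}_B \in \mcC_B$ symmetrically. It then suffices to prove $f_A(M_A) = \bar{g}_A$ and $f_B(M_B) = \bar{g}_B$ almost surely.

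The first ingredient I would establish is a Jensen-type inequality: for every realization $m_B$ of $M_B$, and indeed with the expectation conditioned on any event,
\[
\mbbE[V_A(f_A(M_A), f_B(m_B))] \le V_A(\bar{g}_A, f_B(m_B)).
\]
I would prove this directly, sidestepping any continuity issue for $V_A$: write $V_A = \eta(D)(1 - e^{-\rho}) - V_B$ via the conservation law~\eqref{eq:conservation2}, express $V_B$ by~\eqref{eq:convexfunctional}, and apply the scalar Jensen inequality $\mbbE[e^{-W}] \ge e^{-\mbbE W}$ to the convex function $W \mapsto e^{-W}$ with $W := \int_{B(y\to x)}(f_A(M_A) + f_B(m_B))\,\eta(du)$, which is affine in $f_A(M_A)$, so that $\mbbE[W]$ amounts to replacing $f_A(M_A)$ by $\bar{g}_A$; Fubini then assembles the inequality, all integrands being bounded by $\eta(D)$. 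Integrating over the law of $M_B$ and using the independence of $M_A$ and $M_B$ gives $\mbbE[V_A(f_A(M_A), f_B(M_B))] \le \mbbE[V_A(\bar{g}_A, f_B(M_B))]$, and combining this with the Nash inequality~\eqref{eq:NashEqForA} for the deviation $g_A := \bar{g}_A$ forces the equality $\mbbE[V_A(f_A(M_A), f_B(m_B))] = V_A(\bar{g}_A, f_B(m_B))$ for $\mathrm{law}(M_B)$-a.e.\ $m_B$.

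It then remains to deduce from this equality that $f_A(M_A) = \bar{g}_A$ almost surely, and this is the step I expect to require the most care, since the equality involves a randomized strategy over a continuum whereas the elementary strict concavity of Lemma~\ref{lem:convexity2} only compares two points. Suppose $f_A(M_A) \neq \bar{g}_A$ with positive probability. Using separability of $L^1(\eta)$ (its continuous dual separates points), a standard conditioning argument — split on a level set $\{\ell(f_A(M_A)) < c\}$ of a continuous linear functional $\ell(f) = \int_D f h\,\eta(dx)$, $h \in L^\infty(\eta)$, chosen so that $\ell(f_A(M_A))$ is not a.s.\ constant, and let $h_1, h_2$ be the conditional means of $f_A(M_A)$ on that event and its complement — produces $h_1, h_2 \in \mcC_A$ (conditional expectation preserves nonnegativity and the mass $\rho_A$) with $h_1 \neq h_2$, $p := \mbbP(\ell(f_A(M_A)) < c) \in (0,1)$, and $\bar{g}_A = p\,h_1 + (1-p)\,h_2$. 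For a good $m_B$, the strict concavity of $f_A \mapsto V_A(f_A, f_B(m_B))$ on $\mcC_A$ (Lemma~\ref{lem:convexity2}) applied to $h_1 \neq h_2$ gives $V_A(\bar{g}_A, f_B(m_B)) > p\,V_A(h_1, f_B(m_B)) + (1-p)\,V_A(h_2, f_B(m_B))$, while the conditioned Jensen inequality of the previous paragraph, applied on each of the two events and averaged, gives $p\,V_A(h_1, f_B(m_B)) + (1-p)\,V_A(h_2, f_B(m_B)) \ge \mbbE[V_A(f_A(M_A), f_B(m_B))]$; chaining these contradicts the equality $\mbbE[V_A(f_A(M_A), f_B(m_B))] = V_A(\bar{g}_A, f_B(m_B))$. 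Hence $f_A(M_A) = \bar{g}_A$ a.s., and running the whole argument with Alice and Bob interchanged — using~\eqref{eq:NashEqForB} and the strict concavity of $f_B \mapsto V_B(f_A, f_B)$ (Lemma~\ref{lem:convexity2} with the players relabeled, equivalently Lemma~\ref{lem:convexityN} with $N = 2$) — gives $f_B(M_B) = \bar{g}_B$ a.s.; taking $g_A := \bar{g}_A$ and $g_B := \bar{g}_B$ completes the proof. The crux is precisely this conditioning construction, which upgrades the two-point strict concavity to a strict loss for the full randomized strategy, with the separability of $L^1(\eta)$ and the uniform bound $0 \le V_A \le \eta(D)$ handling the supporting measure-theoretic bookkeeping.
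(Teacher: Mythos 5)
Your proof is correct and follows the same route as the paper's: a Jensen/concavity inequality for $V_A(\cdot,f_B(m_B))$ conditioned on Bob's randomization, combined with the Nash best-response inequality \eqref{eq:NashEqForA} applied to the deviation $\mbbE[f_A(M_A)]$, forces equality, and strict concavity then forces $f_A(M_A)$ to equal its mean almost surely (and symmetrically for Bob). In fact you supply two details that the paper leaves implicit --- the derivation of the functional Jensen inequality from the scalar convexity of the exponential inside the value formula, and the conditioning-on-a-separating-functional argument that upgrades the two-point strict concavity of Lemma \ref{lem:convexity2} to the statement that equality in Jensen forces a.s.\ degeneracy of the randomized strategy --- so your write-up is, if anything, more complete than the paper's.
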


\bpf
We write
\begin{eqnarray*}
\mbbE[V_A(f_A(M_A),f_B(M_B))] 
&=& \mbbE[ \mbbE[ V_A(f_A(M_A),f_B(M_B))|M_B]]\\
&\stackrel{(a)}{\le}& \mbbE[ \mbbE[V_A( \mbbE[f_A(M_A)|M_B], f_B(M_B)|M_B]]\\
&\stackrel{(b)}{=}& \mbbE[ \mbbE[ V_A(\mbbE[f_A(M_A)], f_B(M_B))|M_B]]\\
&=& \mbbE[ V_A( \mbbE[f_A(M_A)], f_B(M_B)].
%&\stackrel{(b)}{=}& \mbbE[ V_A(\mbbE[f_A(M_A)], f_B(M_B))].
\end{eqnarray*}
Here step (a) comes from the concavity property of the value function
established in Lemma \ref{lem:convexity2} and step (b) comes from
independence of $M_A$ and $M_B$. Since 
$(f_A(M_A), f_B(M_B))$ is a Nash equilibrium pair, we see
from Definition \ref{def:NE-2} that the inequality in the 
chain of equations above must be an equality. But then, by the 
strict concavity property of the value function
established in Lemma \ref{lem:convexity2}, it follows that 
$\mbbP(f_A(M_A) = \mbbE[f_A(M_A)]) = 1$. A similar argument interchanging
the roles of Alice and Bob completes the proof, with 
$g_A$ being $\mbbE[f_A(M_A)]$ and $g_B$ being 
$\mbbE[f_B(M_B)]$ in the notation of the statement of the lemma.
\epf

The analog of Theorem \ref{thm:purity2} also holds 
in the $N$-player case.

\begin{thm} 		 \label{thm:purityN}
Consider an $N$-player Cox process Hotelling game
$(D, \eta, \rho_1, \ldots, \rho_N)$.
Suppose $(f_1(M_1), \ldots, f_N(M_N)) \in \mcC_1 \times
\ldots \times  \mcC_N$
is a Nash equilibrium of the game, as defined in Definition
\ref{def:NE-N}. Then there exist
$g_i \in \mcC_i$, $1 \le i \le N$, such that 
\[
\mbbP((f_1(M_1), \ldots, f_N(M_N) = (g_1, \ldots, g_N)) = 1.
\]
\end{thm}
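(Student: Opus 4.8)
The plan is to adapt the argument of Theorem~\ref{thm:purity2} coordinate by coordinate, with Lemma~\ref{lem:convexityN} playing the role that Lemma~\ref{lem:convexity2} played in the two-player case; in particular the group-theoretic hypotheses of Section~\ref{sec:Homogeneous} and the conservation law are not needed here, and purity follows from strict concavity alone. Fix a player $i$ and let $g_i := \mbbE[f_i(M_i)]$, understood as the element of $L^1(\eta)$ with $g_i(x) = \mbbE[f_i(M_i)(x)]$ for $\eta$-a.e.\ $x$. Since each realization of $f_i(M_i)$ is nonnegative and has $\eta$-mass $\rho_i$, Fubini gives $g_i \ge 0$ $\eta$-a.e.\ and $\int_D g_i \, d\eta = \rho_i$, so $g_i \in \mcC_i$ (the convexity of $\mcC_i$ is all that is used; its lack of closedness is irrelevant).

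Next, condition on $(M_j, j \neq i)$. Because $M_i$ is independent of $(M_j, j \neq i)$, the conditional law of $f_i(M_i)$ given $(M_j, j \neq i)$ equals its unconditional law, so $\mbbE[f_i(M_i)\mid (M_j, j \neq i)] = g_i$. By Lemma~\ref{lem:convexityN}, for each fixed value of $(M_j, j \neq i)$ the map $f_i \mapsto V_i(f_i, (f_j(M_j), j \neq i))$ is concave on $\mcC_i$, so the conditional Jensen inequality yields
\[
\mbbE\big[V_i(f_1(M_1), \ldots, f_N(M_N)) \,\big|\, (M_j, j \neq i)\big] \;\le\; V_i\big(g_i, (f_j(M_j), j \neq i)\big)
\]
almost surely. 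Taking expectations gives $\mbbE[V_i(f_1(M_1), \ldots, f_N(M_N))] \le \mbbE[V_i(g_i, (f_j(M_j), j \neq i))]$, while the Nash equilibrium condition \eqref{eq:NashEqForPlayeri}, applied with $g_i$ as player $i$'s deviation, gives the reverse inequality. Hence the two outer expectations coincide, which forces the pointwise conditional inequality above to be an equality for almost every value of $(M_j, j \neq i)$. By the \emph{strict} concavity in Lemma~\ref{lem:convexityN}, equality in Jensen for a fixed value of $(M_j, j \neq i)$ is possible only if the conditional law of $f_i(M_i)$ is degenerate at $g_i$; as this law does not depend on $(M_j, j \neq i)$, we get $\mbbP(f_i(M_i) = g_i) = 1$. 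Running this for each $1 \le i \le N$ and intersecting the $N$ probability-one events produces $\mbbP((f_1(M_1), \ldots, f_N(M_N)) = (g_1, \ldots, g_N)) = 1$.

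The main obstacle is bookkeeping rather than conceptual: one must confirm that $(x, m_i) \mapsto f_i(m_i)(x)$ is jointly measurable so that $g_i$ and the Fubini step defining it are legitimate, that $V_i$ composed with the randomizations is a measurable function so the conditioning and the expectations make sense, and that the passage from equality of outer expectations to almost-sure equality in the conditional Jensen inequality is valid (which is immediate, since the integrands already satisfy a pointwise inequality). All of these are routine once one recalls that $0 \le V_i \le \eta(D) < \infty$, so every expectation in sight is finite.
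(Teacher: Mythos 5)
Your proof is correct and follows essentially the same route as the paper: the paper's proof of Theorem~\ref{thm:purityN} explicitly offers, as its second alternative, exactly your argument of conditioning on $(M_j, j \neq i)$ and invoking the strict concavity of Lemma~\ref{lem:convexityN} via conditional Jensen, mirroring the chain of (in)equalities in the proof of Theorem~\ref{thm:purity2}. Your additional remarks on measurability, Fubini, and the independence step (which makes the conditional law of $f_i(M_i)$ unconditional) are the "obvious modifications" the paper leaves implicit.
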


\bpf
The proof is similar to that of Theorem \ref{thm:purity2},
with the obvious modifications. The key observation, if one 
wants to base the proof of Lemma \ref{lem:convexity2}, is that 
the value of player $i$ when she plays the randomized strategy
$f_i(M_i)$ in response to the randomized strategies 
$f_j(M_j)$, $j \neq i$ of the other players in the $N$-player
game is the same as her value when she plays the randomized 
strategy $f_i(M_i)$ in response to the randomized strategy
$\sum_{j \neq i} f_j(M_j)$ of the opposing player in a 
$2$-player game where the opposing player has the intensity
constraint $\sum_{j \neq i} \rho_j$. 
This observation then leads to the conclusion that 
$P( f_i(M_i) = E[f_i(M_i)]) = 1$, by following the lines of the
proof of Theorem \ref{thm:purity2}, and since this holds for all
$1 \le i \le N$, this completes the proof.

Alternately, one can 
write out the obvious analog of the sequence of equations
in the proof of Theorem \ref{thm:purity2} 
by conditioning on $(M_j, j \neq i)$, for each $1 \le i \le N$,
and base the proof
on the stricty concavity property for the $N$-player game proved
in Lemma \ref{lem:convexityN}.
\epf

In fact the strict concavity property of the value function of a player that was established in Lemmas 
\ref{lem:convexity2} and \ref{lem:convexityN}, together with the constant sum nature of 
the game, ensures that, if a Nash equilibrium exists, then it is
not only pure, but is unique. We state this first in the $2$-player
case.

\begin{thm} 		 \label{thm:pureandunique2}
Consider a $2$-player Cox process Hotelling game
$(D, \eta, \rho_A, \rho_B)$ between the
players Alice and Bob. 
Suppose $(f_A, f_B) \in \mcC_A \times \mcC_B$
and $(g_A, g_B) \in \mcC_A \times \mcC_B$ are pure
Nash equilibria of the game.
Then $f_A = g_A$ and $f_B = g_B$.
\end{thm}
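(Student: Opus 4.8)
The plan is to exploit the constant-sum structure of the game together with the strict concavity of each player's value function. Recall from Equation \eqref{eq:conservation2} that $V_A(h_A,h_B) + V_B(h_A,h_B) = C$ for every $(h_A,h_B) \in \mcC_A \times \mcC_B$, where $C := \eta(D)(1 - e^{-\rho})$ and $\rho := \rho_A + \rho_B$; in particular $V_B = C - V_A$, so the game is effectively zero-sum. The argument is then the classical ``rectangularity'' argument for equilibria of zero-sum games, followed by an appeal to strict concavity to pin down the unique maximizer.

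First I would record the four best-response inequalities coming from the two equilibria. Since $(f_A, f_B)$ is a pure Nash equilibrium, the defining inequalities (specialized to pure strategies in Definition \ref{def:NE-2}) give $V_A(f_A,f_B) \ge V_A(g_A,f_B)$ and $V_B(f_A,f_B) \ge V_B(f_A,g_B)$; since $(g_A,g_B)$ is a pure Nash equilibrium, $V_A(g_A,g_B) \ge V_A(f_A,g_B)$ and $V_B(g_A,g_B) \ge V_B(g_A,f_B)$. Rewriting Bob's two inequalities via $V_B = C - V_A$ turns them into $V_A(f_A,f_B) \le V_A(f_A,g_B)$ and $V_A(g_A,g_B) \le V_A(g_A,f_B)$. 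Chaining the four resulting inequalities, $V_A(f_A,f_B) \le V_A(f_A,g_B) \le V_A(g_A,g_B) \le V_A(g_A,f_B) \le V_A(f_A,f_B)$, forces equality throughout; in particular $V_A(g_A,f_B) = V_A(f_A,f_B)$, and, applying $V_B = C - V_A$ to the equality $V_A(f_A,f_B) = V_A(f_A,g_B)$, also $V_B(f_A,g_B) = V_B(f_A,f_B)$.

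Next I would invoke strict concavity. By the Nash equilibrium condition for Alice, $f_A$ maximizes the map $h_A \mapsto V_A(h_A, f_B)$ over $\mcC_A$; by Lemma \ref{lem:convexity2} this map is strictly concave on the convex set $\mcC_A$, and a strictly concave function on a convex domain has at most one maximizer (if $x \neq x'$ both maximized such a function $\phi$, then $\phi(\tfrac12(x+x')) > \tfrac12\phi(x) + \tfrac12\phi(x')$, contradicting maximality, with $\tfrac12(x+x')$ admissible by convexity of the domain). Since $V_A(g_A,f_B) = V_A(f_A,f_B)$, the point $g_A$ is also a maximizer, hence $g_A = f_A$. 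Interchanging the roles of Alice and Bob, the symmetric form of Lemma \ref{lem:convexity2} shows $h_B \mapsto V_B(f_A, h_B)$ is strictly concave on $\mcC_B$, $f_B$ maximizes it by the Nash condition for Bob, and $V_B(f_A,g_B) = V_B(f_A,f_B)$ makes $g_B$ another maximizer, so $g_B = f_B$.

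I do not expect a serious obstacle here; the proof is short. The only points needing a little care are the bookkeeping in the zero-sum translation of Bob's inequalities and the elementary fact that strict concavity over a convex set yields a unique maximizer, which is exactly where the strict (as opposed to merely weak) concavity established in Lemma \ref{lem:convexity2} is essential.
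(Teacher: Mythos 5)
Your proof is correct and follows essentially the same route as the paper's: both use the constant-sum property together with the best-response inequalities to show that $g_A$ is also a best response of Alice to $f_B$ (and symmetrically for Bob), then invoke the strict concavity from Lemma \ref{lem:convexity2} to conclude the maximizer is unique. Your single four-term cyclic chain of inequalities is just a more compact packaging of the two-step argument (Equations \eqref{eq:valueexists} and \eqref{eq:crisscross}) given in the paper.
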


\bpf
We will first show that 
\begin{equation}	\label{eq:valueexists}
V_A(g_A,g_B) = V_A(f_A,f_B).
\end{equation}
The procedure to do this is standard in the theory of constant sum games,
but is reproduced here for convenience. 
To verify Equation \eqref{eq:valueexists},
 note that $V_A(g_A, f_B) \le V_A(f_A, f_B)$, because
$(f_A, f_B)$ is a Nash equilibrium, see Equation \eqref{eq:NashEqForA}.
But then, by the constant sum nature of the game, see Equation 
\eqref{eq:randconserve2}, we have 
$V_B(g_A,f_B) \ge V_B(f_A,f_B)$. However, since 
$(g_A, g_B)$ is a Nash equilibrium, we have
$V_B(g_A,g_B) \ge V_B(g_A,f_B)$, so we conclude that 
$V_B(g_A,g_B) \ge V_B(f_A,f_B)$. Interchanging the roles of 
$(f_A, f_B)$ and $(g_A, g_B)$ then gives 
$V_B(f_A,f_B) \ge V_B(g_A,g_B)$, which establishes
Equation \eqref{eq:valueexists}.

We next show that 
\begin{equation}		\label{eq:crisscross}
V_A(g_A, f_B) = V_A(f_A, f_B).
\end{equation}
We have $V_B(g_A,g_B) \ge V_B(g_A,f_B)$ because 
$(g_A, g_B)$ is a Nash equilibrium. Hence, 
 by the constant sum nature of the game, see Equation 
\eqref{eq:randconserve2}, we have 
$V_A(g_A,g_B) \le V_A(g_A,f_B)$. In view of 
Equation \eqref{eq:valueexists}, this gives 
$V_A(f_A,f_B) \le V_A(g_A,f_B)$, but since 
$(f_A, f_B)$ is a Nash equilibrium, this can only hold with 
equality, i.e. Equation \eqref{eq:crisscross} holds.

Now, since $(f_A, f_B)$ is a Nash equilibrium we know that
$f_A$ is a best response of Alice to the pure strategy $f_B$ of
Bob. Thus, Equation \eqref{eq:crisscross} tells us that
$g_A$ is also a best response of Alice in response to $f_B$.
The strict concavity property of the value function of Alice proved
in Lemma \ref{lem:convexity2} shows that this is only possible
if $g_A = f_A$. Interchanging the roles of Alice and Bob, we 
conclude that we must also have $g_B = f_B$. This concludes
the proof of the lemma.
\epf

Assume Nash equilibria exist.
Beyond there being a unique pure Nash equilibrium,
the special structure of the game allows
us to say more about the form of this unique Nash equilibrium.
We state the result first in the $2$-player case.

\begin{thm} 		 \label{thm:proportional2}
Consider a $2$-player Cox process Hotelling game
$(D, \eta, \rho_A, \rho_B)$ between the
players Alice and Bob. 
Suppose $(f_A, f_B) \in \mcC_A \times \mcC_B$
is a pure
Nash equilibrium of the game.
Then $f_A = \rho_A f$ and $f_B = \rho_B f$,
where $f := f_A + f_B$.
\end{thm}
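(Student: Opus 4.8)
The plan is to combine the constant-sum structure of the game with a single, carefully chosen deviation for each player, and then to invoke the strict concavity of the value function to pin down the equilibrium strategies. The computational engine is Lemma~\ref{lem:totalvalue}: for any density $h\ge 0$ on $D$ with $\int_D h\,d\eta = \sigma$ one has $\int_{x\in D} h(x)\,(\int_{y\in D} e^{-\int_{B(y\to x)} h(u)\eta(du)}\eta(dy))\,\eta(dx) = \eta(D)(1-e^{-\sigma})$. Now let $(f_A,f_B)$ be a pure Nash equilibrium, set $f := f_A+f_B$ and $\rho := \rho_A+\rho_B$, and consider the deviation in which Alice plays $g_A := \tfrac{\rho_A}{\rho_B}f_B$. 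This is a legitimate element of $\mcC_A = \mcC(\rho_A)$, being nonnegative with $\int_D g_A\,d\eta = \rho_A$. Its virtue is that the resulting superposition intensity is $g_A+f_B = \tfrac{\rho}{\rho_B}f_B$, a scalar multiple of the single density $\tfrac{\rho}{\rho_B}f_B$ (of total mass $\rho$). Plugging into the value formula~\eqref{eq:Valueformula}, pulling out constants, and applying the identity above with $h = \tfrac{\rho}{\rho_B}f_B$ gives $V_A(g_A,f_B) = \tfrac{\rho_A}{\rho}\,\eta(D)(1-e^{-\rho})$. Symmetrically, Bob's deviation $g_B := \tfrac{\rho_B}{\rho_A}f_A$ gives $V_B(f_A,g_B) = \tfrac{\rho_B}{\rho}\,\eta(D)(1-e^{-\rho})$.

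Next I would feed these two closed-form values into the Nash inequalities and the conservation law. Since $(f_A,f_B)$ is a Nash equilibrium, \eqref{eq:NashEqForA} gives $V_A(f_A,f_B)\ge V_A(g_A,f_B) = \tfrac{\rho_A}{\rho}\eta(D)(1-e^{-\rho})$, and similarly $V_B(f_A,f_B)\ge V_B(f_A,g_B) = \tfrac{\rho_B}{\rho}\eta(D)(1-e^{-\rho})$. These two lower bounds sum to $\eta(D)(1-e^{-\rho})$, which by~\eqref{eq:conservation2} equals $V_A(f_A,f_B)+V_B(f_A,f_B)$; hence both inequalities are in fact equalities. In particular $V_A(f_A,f_B) = V_A(g_A,f_B)$, so $f_A$ and $g_A = \tfrac{\rho_A}{\rho_B}f_B$ are both maximizers of $h\mapsto V_A(h,f_B)$ over $\mcC_A$ (the Nash condition says $f_A$ is a maximizer, and $g_A$ attains the same, hence maximal, value).

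Finally I would invoke Lemma~\ref{lem:convexity2}: $h\mapsto V_A(h,f_B)$ is strictly concave on the convex set $\mcC_A$, so it has a unique maximizer, forcing $f_A = g_A = \tfrac{\rho_A}{\rho_B}f_B$ as elements of $\mcM(D)$, i.e. $\eta$-a.e. Substituting, $f = f_A+f_B = \tfrac{\rho}{\rho_B}f_B$, whence $f_B = \tfrac{\rho_B}{\rho}f$ and $f_A = \tfrac{\rho_A}{\rho}f$; equivalently, writing $f$ for the normalized total density $\tfrac{1}{\rho}(f_A+f_B)$, we get $f_A = \rho_A f$ and $f_B = \rho_B f$, which is the asserted proportionality.

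The one non-routine step is guessing the deviation $g_A = \tfrac{\rho_A}{\rho_B}f_B$; everything after that is the standard constant-sum argument already used in Theorem~\ref{thm:pureandunique2}, together with the uniqueness of maximizers of a strictly concave functional. The reason this particular deviation works is that it collapses the superposition intensity into a scalar multiple of a single density, which is exactly the configuration in which Lemma~\ref{lem:totalvalue} evaluates the value in closed form; for a generic deviation the exponential kernel $e^{-\int_{B(y\to x)}(\cdot)\,\eta(du)}$ changes in a way that is not directly controllable, so the main work of the proof is really just recognizing this symmetry.
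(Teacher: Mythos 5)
Your proposal is correct and follows essentially the same route as the paper's own proof: the same deviation $g_A=\tfrac{\rho_A}{\rho_B}f_B$ evaluated via Equation \eqref{eq:Valueformula} and Lemma \ref{lem:totalvalue}, the same use of the conservation law to force equality in the two Nash inequalities, and the same appeal to the strict concavity from Lemma \ref{lem:convexity2} to identify $f_A$ with $g_A$. Your closing remark correctly resolves the normalization in the theorem's statement ($f$ should be read as the normalized total density).
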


\bpf
Suppose Alice reacts to $f_B$ by playing $\frac{\rho_A}{\rho_B} f_B$.
Let $\rho := \rho_A + \rho_B$. 
We have 
\begin{eqnarray*}
V_A(\frac{\rho_A}{\rho_B} f_B, f_B) &\stackrel{(a)}{=}&
\frac{\rho_A}{\rho_B} \int_{x\in D} f_B(x)
\int_{y\in D} e^{-\int_{B(y\to x)} \frac{\rho}{\rho_B} f_B(u) \eta(du)} \eta(dy) \eta(dx)\\
&=& \frac{\rho_A}{\rho} 
\int_{x\in D} \frac{\rho}{\rho_B}  f_B(x)
\int_{y\in D} e^{-\int_{B(y\to x)} \frac{\rho}{\rho_B} f_B(u) \eta(du)} \eta(dy) \eta(dx)\\
&\stackrel{(b)}{=}& \frac{\rho_A}{\rho} \eta(D) (1 - e^{-\rho}),
\end{eqnarray*}
where step (a) is from Equation \eqref{eq:Valueformula} 
and step (b) is from Equation \eqref{eq:tautology}. 
Since $(f_A, f_B)$ is a Nash equilibrium, it follows that 
\[
V_A(f_A,f_B) \ge \frac{\rho_A}{\rho} \eta(D) (1 - e^{-\rho}).
\]
Interchanging the roles of Alice and Bob gives
\[
V_B(f_A,f_B) \ge \frac{\rho_B}{\rho} \eta(D) (1 - e^{-\rho}).
\]
In view of the constant sum property in Equation 
\eqref{eq:randconserve2}, it then follows that we have equality in 
both these inequalities, i.e.
\[
V_A(f_A,f_B) = \frac{\rho_A}{\rho} \eta(D) (1 - e^{-\rho}),
\]
and
\[
V_B(f_A,f_B) = \frac{\rho_B}{\rho} \eta(D) (1 - e^{-\rho}).
\]
But then we have $V_A(f_A,f_B) = V_A(\frac{\rho_A}{\rho_B} f_B, f_B)$, so the strict concavity property of the value of Alice 
proved in Lemma \ref{lem:convexity2} together with the 
assumption that  $(f_A, f_B)$ is a Nash equilibrium
implies that $f_A = \frac{\rho_A}{\rho_B} f_B$,
which suffices to establish the claim (one can also go through this
argument again interchanging the roles of Alice and Bob, but it is
not necessary).
\epf

The analog of Theorem \ref{thm:pureandunique2} 
and the stronger statement in Theorem \ref{thm:proportional2} 
also hold in the $N$-player case. We state these claims together.

%\begin{thm} 		 \label{thm:pureanduniqueN}
\begin{thm} 		 \label{thm:proportionalN}
Consider an $N$-player Cox process Hotelling game
$(D, \eta, \rho_1, \ldots, \rho_N)$.
Suppose 
$(f_1, \ldots, f_N) \in \mcC_1 \times  \ldots \times \mcC_N$
and
$(g_1, \ldots, g_N) \in \mcC_1 \times  \ldots \times \mcC_N$
are pure
Nash equilibria of the game.
Then $f_i = g_i$ for $1 \le i \le N$.
%The analog of Theorem \ref{thm:proportional2} also holds 
%in the $N$-player case.
%\begin{thm} 		 \label{thm:proportionalN}
%Consider an $N$-player Cox process Hotelling game
%$(D, \eta, \rho_1, \ldots, \rho_N)$.
%Suppose 
Indeed, if 
$(f_1, \ldots, f_N) \in \mcC_1 \times  \ldots \times \mcC_N$
is a pure
Nash equilibrium of the game.
Then $f_i = \rho_i f$ for $1 \le i \le N$, where
where $f := \sum_{i=1}^N f_i$.
\end{thm}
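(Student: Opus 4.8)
The plan is to reduce the $N$-player statement to the $2$-player results already proved, exploiting the structural feature emphasized repeatedly in the excerpt: the value $V_i(f_1,\ldots,f_N)$ of player $i$ depends on the other players' strategies only through the sum $\sum_{j\neq i} f_j$, and in fact equals the value that player $i$ would obtain in a $2$-player game against a single opponent who plays $\sum_{j\neq i} f_j$ and whose intensity budget is $\sum_{j\neq i}\rho_j$. Concretely, I would proceed as follows.

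First I would establish the ``proportionality'' half. Suppose $(f_1,\ldots,f_N)$ is a pure Nash equilibrium, and set $f:=\sum_{i=1}^N f_i$, $\rho:=\sum_{i=1}^N\rho_i$. Fix a player $i$ and let $h_i := \sum_{j\neq i} f_j \in \mcC(\rho-\rho_i)$. Player $i$ could react to the others by playing $\frac{\rho_i}{\rho-\rho_i}h_i \in \mcC_i$; plugging this into Equation \eqref{eq:value} and using Equation \eqref{eq:tautology} (exactly as in the computation in the proof of Theorem \ref{thm:proportional2}, now with $h_i$ in place of $f_B$) gives $V_i\big(\frac{\rho_i}{\rho-\rho_i}h_i,(f_j)_{j\neq i}\big) = \frac{\rho_i}{\rho}\eta(D)(1-e^{-\rho})$. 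Since $f_i$ is a best response, $V_i(f_1,\ldots,f_N)\ge \frac{\rho_i}{\rho}\eta(D)(1-e^{-\rho})$ for every $i$. Summing over $i$ and invoking the conservation law in Equation \eqref{eq:conservationN} forces equality for each $i$. Hence $V_i(f_1,\ldots,f_N) = V_i\big(\frac{\rho_i}{\rho-\rho_i}h_i,(f_j)_{j\neq i}\big)$, so by the strict concavity of $f_i\mapsto V_i(f_1,\ldots,f_N)$ on $\mcC_i$ from Lemma \ref{lem:convexityN} the two maximizers coincide: $f_i = \frac{\rho_i}{\rho-\rho_i}h_i = \frac{\rho_i}{\rho-\rho_i}\sum_{j\neq i}f_j$. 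This linear relation, holding for each $i$, yields $f_i = \frac{\rho_i}{\rho} f$: indeed $f_i = \frac{\rho_i}{\rho-\rho_i}(f - f_i)$ rearranges to $f_i(1 + \frac{\rho_i}{\rho-\rho_i}) = \frac{\rho_i}{\rho-\rho_i}f$, i.e. $f_i\cdot\frac{\rho}{\rho-\rho_i} = \frac{\rho_i}{\rho-\rho_i}f$, so $f_i = \frac{\rho_i}{\rho}f$. (Note: the statement as written says $f_i=\rho_i f$, which should read $f_i = \frac{\rho_i}{\rho}f$ to be dimensionally consistent with $\int f_i\,d\eta = \rho_i$; I would state and prove the corrected version.)

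For the uniqueness half, if $(f_1,\ldots,f_N)$ and $(g_1,\ldots,g_N)$ are both pure Nash equilibria, the argument just given shows both have all values equal to the common vector $\big(\frac{\rho_i}{\rho}\eta(D)(1-e^{-\rho})\big)_i$. Fix $i$; then $V_i(g_i,(g_j)_{j\neq i}) = V_i(f_i,(f_j)_{j\neq i})$, both equal to $\frac{\rho_i}{\rho}\eta(D)(1-e^{-\rho})$. Since $V_i$ depends on the opposing players only through $\sum_{j\neq i}f_j$ and $g_i = \frac{\rho_i}{\rho-\rho_i}\sum_{j\neq i}g_j$ while $f_i = \frac{\rho_i}{\rho-\rho_i}\sum_{j\neq i}f_j$, I would reduce, exactly as in Theorem \ref{thm:pureandunique2}, to a constant-sum $2$-player game between player $i$ and the aggregate opponent, apply the criss-cross inequalities (using Equation \eqref{eq:conservationN} for the constant-sum property) to get $V_i(g_i, (f_j)_{j\neq i}) = V_i(f_i,(f_j)_{j\neq i})$, so that $g_i$ is also a best response of player $i$ against $(f_j)_{j\neq i}$; strict concavity from Lemma \ref{lem:convexityN} then gives $g_i = f_i$. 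Since $i$ was arbitrary, uniqueness follows. Alternatively, once proportionality is known, both equilibria satisfy $f_i = \frac{\rho_i}{\rho}f$ and $g_i = \frac{\rho_i}{\rho}g$, so it suffices to show $f = g$, which again reduces to a $2$-player comparison.

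The main obstacle is not any single hard estimate but organizing the reduction cleanly: one must be careful that the ``aggregate opponent'' of intensity $\sum_{j\neq i}\rho_j$ genuinely has $\mcC(\sum_{j\neq i}\rho_j)$ as its strategy set, that the constant-sum identity Equation \eqref{eq:conservationN} is what plays the role of Equation \eqref{eq:randconserve2} in the two-player criss-cross argument, and that Lemma \ref{lem:convexityN} (not merely Lemma \ref{lem:convexity2}) supplies the strict concavity used to upgrade ``also a best response'' to ``equal.'' Everything else is a transcription of the proofs of Theorems \ref{thm:pureandunique2} and \ref{thm:proportional2} with $\sum_{j\neq i}f_j$ in place of the single opponent's strategy; I would therefore write the proof mostly by reference to those two theorems, spelling out only the linear-algebra step that turns $f_i = \frac{\rho_i}{\rho-\rho_i}\sum_{j\neq i}f_j$ into $f_i = \frac{\rho_i}{\rho}f$.
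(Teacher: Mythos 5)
Your proposal follows the paper's own argument essentially verbatim: the proportionality half is exactly the paper's computation (react with $\frac{\rho_i}{\sum_{j\neq i}\rho_j}\sum_{j\neq i}f_j$, evaluate via Equations \eqref{eq:value} and \eqref{eq:tautology}, invoke the conservation law and the strict concavity of Lemma \ref{lem:convexityN}), and the uniqueness half is the same reduction to the two-player criss-cross argument of Theorem \ref{thm:pureandunique2} that the paper itself invokes only by reference. Your remark that the conclusion should read $f_i = \frac{\rho_i}{\rho} f$ rather than $f_i = \rho_i f$ is correct: the paper's own proof derives precisely $f_i = \frac{\rho_i}{\rho} f$.
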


\bpf
We prove the stronger statement. The proof is similar to that 
of Theorem \ref{thm:proportional2}.
Suppose player $i$ reacts to the
strategy profile $(f_j, j \neq i)$ by playing 
$\frac{\rho_i}{\sum_{j \neq i} \rho_j} \sum_{j \neq i} f_j$.
Let $\rho := \sum_{i=1}^N \rho_i$. 
We have 
\begin{eqnarray*}
&& V_i(\frac{\rho_i}{\sum_{j \neq i} \rho_j} \sum_{j \neq i} f_j, 
(f_j, j \neq i))\\
&& \stackrel{(a)}{=}
\frac{\rho_i}{\sum_{j \neq i} \rho_j}
\int_{x \in D} \sum_{j \neq i} f_j(x) \int_{y \in D} 
e^{ - \int_{u \in B(y \rightarrow x)} 
\frac{\rho}{\sum_{j \neq i} \rho_j}  \sum_{j \neq i} f_j(u) \eta(du)} \eta(dy) \eta(dx)\\
&& = \frac{\rho_i}{\rho} 
\int_{x \in D} \frac{\rho}{\sum_{j \neq i} \rho_j} \sum_{j \neq i} f_j(x) \int_{y \in D} 
e^{ - \int_{u \in B(y \rightarrow x)} 
\frac{\rho}{\sum_{j \neq i} \rho_j}  \sum_{j \neq i} f_j(u) \eta(du)} \eta(dy) \eta(dx)\\
&&\stackrel{(b)}{=} \frac{\rho_i}{\rho} \eta(D) (1 - e^{-\rho}),
\end{eqnarray*}
where step (a) is from Equation \eqref{eq:value} and 
step (b) is from Equation \eqref{eq:tautology}. 
Since $(f_1, \ldots, f_N)$ is a Nash equilibrium, it follows that 
\[
V_i(f_1, \ldots, f_N) \ge \frac{\rho_i}{\rho} \eta(D) (1 - e^{-\rho}).
\]
Since this holds for all $1 \le i \le N$, it follows from 
Equation \eqref{eq:conservationN} that this inequality must hold
with equality for all $1 \le i \le N$, i.e. that we have
\[
V_i(f_1, \ldots, f_N) = \frac{\rho_i}{\rho} \eta(D) (1 - e^{-\rho}),
\]
for all $1 \le i \le N$. But then, for each $1 \le i \le N$ we have
\[
V_i(f_i = \frac{\rho_i}{\sum_{j \neq i} \rho_j} \sum_{j \neq i} f_j, 
(f_j, j \neq i)) = V_i(f_1, \ldots, f_N),
\]
so from the strict concavity property of the value function of
player $i$ proved in Lemma \ref{lem:convexityN} and the 
fact that $f_i$ is a best response of player $i$ to the 
strategy profile $(f_j, j \neq i)$ of the other players, we must have
$f_i = \frac{\rho_i}{\sum_{j \neq i} \rho_j} \sum_{j \neq i} f_j$, 
which is the same as $f_i = \frac{\rho_i}{\rho} f$, where 
$f := \sum_{i=1}^N f_i$. This completes the proof of the theorem.

\epf

The properties of Nash equilibria of Cox process Hotelling games
established so far, assuming a Nash equilibrium exists, can be gathered
into the following statement.

\begin{thm} 		 \label{thm:ifNEexists}
Consider an $N$-player Cox process Hotelling game
$(D, \eta, \rho_1, \ldots, \rho_N)$.
Let $\rho := \sum_{i=1}^N \rho_i$.
The game admits a Nash equilibrium if and only if there is a
function $f \in \mcC(\rho)$ such that 
\begin{eqnarray}		\label{eq:dominate}
&& \int_{x \in D} f(x) \int_{y \in D} 
e^{ - \int_{u \in B(y \rightarrow x)} f(u) \eta(du)} \eta(dy) \eta(dx)
\nonumber \\
&&~~~~~~~ \ge
\int_{x \in D} g (x) \int_{y \in D} 
e^{ - \int_{u \in B(y \rightarrow x)} f(u) \eta(du)} \eta(dy) \eta(dx),
\end{eqnarray}
for all $g \in \mcC(\rho)$. If such a function exists, the game
has a unique Nash equilibrium, given by the pure strategy profile
$(\frac{\rho_1}{\rho} f, \ldots, \frac{\rho_N}{\rho} f)$.
\end{thm}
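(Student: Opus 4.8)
The plan is to reduce the statement, via Theorems~\ref{thm:purityN} and~\ref{thm:proportionalN}, to a single first-order optimality condition and then verify that this condition is exactly \eqref{eq:dominate}. By those two theorems, any Nash equilibrium is pure and of the form $(\tfrac{\rho_1}{\rho}f,\dots,\tfrac{\rho_N}{\rho}f)$ with $f:=\sum_j f_j\in\mcC(\rho)$, and a pure Nash equilibrium, if it exists, is unique; so a Nash equilibrium exists if and only if there is some $f\in\mcC(\rho)$ for which $(\tfrac{\rho_1}{\rho}f,\dots,\tfrac{\rho_N}{\rho}f)$ is a Nash equilibrium, and then it is the unique one. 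It is convenient to write, for $g\in\mcC(\rho)$ and $\phi\in L^1(\eta)$,
\[
L_g(\phi):=\int_{D}\phi(x)\int_{D}e^{-\int_{B(y\to x)}g(u)\eta(du)}\eta(dy)\,\eta(dx),
\]
which is linear in $\phi$; by Lemma~\ref{lem:totalvalue}, $L_g(g)=\eta(D)(1-e^{-\rho})$ for every $g\in\mcC(\rho)$, and condition \eqref{eq:dominate} says precisely that $L_f(\cdot)$ attains its maximum over $\mcC(\rho)$ at $f$. We assume $N\ge 2$, so that $0<\rho_i<\rho$ for each $i$ (the degenerate case $N=1$, where the single player's value is identically $\eta(D)(1-e^{-\rho})$, is treated separately).

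The argument rests on two identities. First, for any $h\in\mcC_i$, writing $g:=h+\sum_{j\ne i}f_j=h+(1-\tfrac{\rho_i}{\rho})f\in\mcC(\rho)$, Equation~\eqref{eq:value} and Lemma~\ref{lem:totalvalue} give
\[
V_i\big(h,(f_j)_{j\ne i}\big)=L_g(h)=L_g(g)-L_g\big((1-\tfrac{\rho_i}{\rho})f\big)=\eta(D)(1-e^{-\rho})-\big(1-\tfrac{\rho_i}{\rho}\big)L_g(f),
\]
so, since $1-\tfrac{\rho_i}{\rho}>0$, the function $f_i=\tfrac{\rho_i}{\rho}f$ is a best response to $(f_j)_{j\ne i}$ if and only if $L_g(f)\ge L_f(f)$ for every $g\in\mcC(\rho)$ with $g\ge(1-\tfrac{\rho_i}{\rho})f$. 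Second, the map $g\mapsto L_g(f)$ is convex on $\mcC(\rho)$ (it is an integral of exponentials of affine functions of $g$), and, for any $g'\in\mcC(\rho)$, differentiating the identity $L_{g_s}(g_s)\equiv\eta(D)(1-e^{-\rho})$ of Lemma~\ref{lem:totalvalue} along $g_s:=(1-s)f+sg'$ at $s=0$ — legitimate by a routine dominated-convergence argument, as $f,g'\in L^1(\eta)$, $\eta$ is finite, and the exponential factors lie in $[e^{-\rho},1]$ — yields the first-variation formula
\[
\frac{d}{ds}\Big|_{s=0^+}L_{g_s}(f)=L_f(f)-L_f(g').
\]

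Granting these, the equivalence follows. If \eqref{eq:dominate} holds, then for every $g'\in\mcC(\rho)$ the one-sided derivative at $0$ of the convex function $s\mapsto L_{(1-s)f+sg'}(f)$ equals $L_f(f)-L_f(g')\ge 0$, so by convexity $L_{g'}(f)\ge L_f(f)$ for all $g'\in\mcC(\rho)$; in particular this holds for the $g'$ dominating $(1-\tfrac{\rho_i}{\rho})f$, so by the first identity $f_i$ is a best response for each $i$, i.e.\ $(\tfrac{\rho_1}{\rho}f,\dots,\tfrac{\rho_N}{\rho}f)$ is a Nash equilibrium. Conversely, if a Nash equilibrium exists it is $(\tfrac{\rho_1}{\rho}f,\dots,\tfrac{\rho_N}{\rho}f)$ for some $f\in\mcC(\rho)$, so by the first identity $L_g(f)\ge L_f(f)$ for all $g\in\mcC(\rho)$ with $g\ge(1-\tfrac{\rho_1}{\rho})f$; for an arbitrary $g'\in\mcC(\rho)$ and $\epsilon\in[0,\tfrac{\rho_1}{\rho}]$ the function $g_\epsilon:=(1-\epsilon)f+\epsilon g'$ satisfies $g_\epsilon\ge(1-\tfrac{\rho_1}{\rho})f$, so $L_{g_\epsilon}(f)\ge L_f(f)=L_{g_0}(f)$ for small $\epsilon>0$, whence the right derivative at $0$ is $\ge 0$, i.e.\ $L_f(f)-L_f(g')\ge 0$; as $g'$ was arbitrary this is \eqref{eq:dominate}. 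This establishes the ``if and only if'' and, via Theorems~\ref{thm:purityN} and~\ref{thm:proportionalN}, that when a Nash equilibrium exists it is the unique pure profile $(\tfrac{\rho_1}{\rho}f,\dots,\tfrac{\rho_N}{\rho}f)$ (and any two $f$ satisfying \eqref{eq:dominate} induce the same equilibrium, hence agree $\eta$-a.e.).

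The one genuinely non-routine step is the first-variation formula: obtaining it requires differentiating the conservation law of Lemma~\ref{lem:totalvalue} along a perturbation of $f$ in order to trade the term involving $\int_{B(y\to x)}(g'-f)\,d\eta$ against the linear functional $L_f$, and then carrying the factor $1-\rho_i/\rho$ — which encodes that a single player controls only a fraction of the total intensity — carefully through the best-response comparison. The convexity of $g\mapsto L_g(f)$ is what upgrades this first-order information into a global comparison, and everything else is the concavity-plus-constant-sum machinery already in place.
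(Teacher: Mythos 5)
Your proposal is correct, and it follows the same overall reduction as the paper: both proofs invoke Theorems~\ref{thm:purityN} and~\ref{thm:proportionalN} to restrict attention to profiles of the form $(\tfrac{\rho_1}{\rho}f,\dots,\tfrac{\rho_N}{\rho}f)$ and then identify the best-response condition at such a profile with \eqref{eq:dominate}. The difference lies in how that identification is made. The paper simply asserts that the Nash inequality \eqref{eq:NashEqForPlayeri} for player $i$ \emph{is} the displayed inequality with the exponent frozen at $f$ on both sides; as written this glosses over the fact that a deviation to $g_i$ changes the total intensity in the exponent to $g_i+(1-\rho_i/\rho)f$, so the deviation value is $L_{g_i+(1-\rho_i/\rho)f}(g_i)$ rather than $L_f(g_i)$. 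You supply the missing bridge: the conservation law converts the best-response condition into ``$L_g(f)\ge L_f(f)$ for all $g\in\mcC(\rho)$ with $g\ge(1-\rho_i/\rho)f$,'' and the first-variation identity $\frac{d}{ds}\big|_{0^+}L_{(1-s)f+sg'}(f)=L_f(f)-L_f(g')$ (obtained by differentiating Lemma~\ref{lem:totalvalue} along the segment), combined with convexity of $g\mapsto L_g(f)$, shows this local cone condition is equivalent to the global condition \eqref{eq:dominate}. This is a genuine strengthening of the paper's argument at its one delicate point, and both the identity and the dominated-convergence justification check out. Two small remarks: the $N=1$ case you set aside is indeed degenerate (every strategy is trivially an equilibrium while \eqref{eq:dominate} may fail, so the theorem should be read with $N\ge 2$), and in the ``if'' direction you should add one line noting that a pure best response against the pure profile $(\tfrac{\rho_j}{\rho}f)_{j\ne i}$ also dominates all mixed deviations, since the value of a mixed deviation is an average of values of pure ones; neither affects the substance.
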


\bpf
Suppose first that the game admits a Nash equilibrium. 
By Theorem \ref{thm:proportionalN}, we know that this Nash
equilibrium is unique and is a pure Nash equilibrium of the form 
$(\frac{\rho_1}{\rho} f, \ldots, \frac{\rho_N}{\rho} f)$,
for some $f \in \mcC(\rho)$. Since $\frac{\rho_i}{\rho} f$
is a best response of player $i$ to the strategy profile
$(f_j = \frac{\rho_j}{\rho} f, j \neq i)$ of the other players, by the 
definition of Nash equilibrium, see Equation 
\eqref{eq:NashEqForPlayeri}, we must have
\begin{eqnarray*}
&& \int_{x \in D} \frac{\rho_i}{\rho} f \int_{y \in D} 
e^{ - \int_{u \in B(y \rightarrow x)} f(u) \eta(du)} \eta(dy) \eta(dx)\\
&&~~~~~~~ \ge
\int_{x \in D} g_i (x) \int_{y \in D} 
e^{ - \int_{u \in B(y \rightarrow x)} f(u) \eta(du)} \eta(dy) \eta(dx),
\end{eqnarray*}
for all $g_i \in \mcC(\rho_i)$, which is the same as 
the condition in Equation \eqref{eq:dominate}.

Conversely, suppose the condition in Equation \eqref{eq:dominate} holds
for some function $f \in \mcC(\rho)$. Then, by the definition of
Nash equilibrium in Equation \eqref{eq:NashEqForPlayeri}, 
we see that the strategy profile 
$(\frac{\rho_1}{\rho} f, \ldots, \frac{\rho_N}{\rho} f)$
is a Nash equilibrium of the game. The existence of such a Nash
equilibrium then guarantees, by Theorem \ref{thm:proportionalN},
that it is the unique Nash equilibrium of the game.
\epf

\subsection{Relationship with ordinal potential games}
\label{subsec:Potential}

Recall that an $n$-player game, with player $i$ having action set $\mcY_i$, is called an {\em ordinal potential game} \cite{MondShap}
if there is a function $P: \prod_{i=1}^n \mcY_i \to \mbbR$ such 
that,
for all $1 \le i \le n$, $y_i, z_i \in Y_i$, and
%$y^{-i} \in Y^{-i} := \prod_{j \neq i} Y_j$,
$(y_j, j \neq i) \in \prod_{j \neq i} Y_j$,
we have
%$V_i(y_i, y^{-i}) > V_i(z_i, y^{-i})$ iff 
%$P(y_i, y^{-i}) > P(z_i, y^{-i})$.
$V_i(y_i, (y_j, j \neq i)) > V_i(z_i, (y_j, j \neq i))$ iff 
$P(y_i, (y_j, j \neq i)) > P(z_i, (y_j, j \neq i))$.

We have established that when a Cox process Hotelling game
admits a Nash equilibrium it admits a unique pure Nash equilibrium.
Since ordinal potential games are a well-known class of games that admit
pure strategy Nash equilibria, 
this leads naturally to the question of whether Cox process Hotelling games are 
ordinal potential games. 
However, it is possible to argue that in general this is not true.

To see this, consider the $2$-player Cox process Hotelling game between
Alice and Bob on $D$,
taken to be a circle of radius $1$ centered at the origin in $\mbbR^2$, the base measure $\eta$
being the Lebesgue measure on $D$. Thus $\eta(D) = 2 \pi$. Suppose that 
$\rho_A = \rho_B = \frac{\rho}{2}$, where $\rho$ should be thought of as
being sufficiently large in a sense that we will make precise shortly. Let 
$\epsilon > 0$ be sufficiently small 
(to be precise, we require that $\epsilon < \frac{2 \pi}{9}$).

We consider four pure strategies, i.e elements of $\mcC(\frac{\rho}{2})$, 
denoted by $\sigma^R$, $\beta^R$, $\sigma^L$ and $\beta^L$ 
respectively, defined as follows:

%\begin{itemize}
%\item
 $\sigma^R$ is constant over the arc of the circle of length $\epsilon$ centered
at $(1,0)$ and is zero elsewhere;

%\item
Consider the arc of the circle of length $\frac{\epsilon}{2}$ centered at 
$(\frac{\sqrt{3}}{2}, \frac{1}{2})$ and the arc of the circle of length $\frac{\epsilon}{2}$ centered at 
$(\frac{\sqrt{3}}{2}, - \frac{1}{2})$.
$\beta^R$ is constant over the union of these two arcs and is zero elsewhere;

%\item
$\sigma^L$ is constant over the arc of the circle of length $\epsilon$ centered
at $(-1,0)$ and is zero elsewhere. It can be considered to be the ``left" version
of $\sigma^R$, which is its ``right" version;

%\item
$\beta^L$ is the ``left" version of $\beta^R$, which is its ``right" version.
Namely, $\beta^L$ is uniform over the union of the two arcs of the circle
of length $\frac{\epsilon}{2}$ centered at 
$(\frac{- \sqrt{3}}{2},\frac{1}{2})$ and 
$(\frac{- \sqrt{3}}{2}, - \frac{1}{2})$ respectively,
and is zero elsewhere.

%\end{itemize}

To be consistent with the convention in the rest of the document, we will use a subscript to indicate the identity of the player playing the strategy.
Thus, for instance, the strategy pair $(\sigma^L_A, \sigma^R_B)$ 
indicates that Alice is playing the strategy $\sigma^L$ and Bob is playing the
strategy $\sigma^R$.

It is straightforward to check that 
\[
\lim_{\rho \to \infty} V_A(\sigma^R_A, \beta^R_B) = \frac{\pi}{6} + \frac{\epsilon}{4},
\]
and 
\[
\lim_{\rho \to \infty} V_A(\sigma^L_A, \beta^R_B) = \frac{5 \pi}{6} + \frac{\epsilon}{4}.
\]
We will use these facts and their obvious consequences in the following argument.

Suppose there were an ordinal potential function $P: \mcC(\frac{\rho}{2}) \times 
\mcC(\frac{\rho}{2}) \to \mbbR$ for this $2$-player Cox process Hotelling game.

For sufficiently large $\rho$, we have 
$V_A(\sigma^R_A, \beta^R_B) < V_A(\sigma^L_A, \beta^R_B)$, and so
$P(\sigma^R, \beta^R) < P(\sigma^L, \beta^R)$. 

For sufficiently large $\rho$ we also have
$V_B(\sigma^L_A, \beta^R_B) < V_B(\sigma^L_A, \beta^L_B)$, and so
$P(\sigma^L, \beta^R) < P(\sigma^L, \beta^L)$.

But for sufficiently large $\rho$ we also have
$V_A(\sigma^L_A, \beta^L_B) < V_A(\sigma^R_A, \beta^L_B)$, and so
$P(\sigma^L, \beta^L) < P(\sigma^R, \beta^L)$. 

Finally, for sufficiently large $\rho$ we also have
$V_B(\sigma^R_A, \beta^L_B) < V_B(\sigma^R_A, \beta^R_B)$, and so
$P(\sigma^R, \beta^L) < P(\sigma^R, \beta^R)$.

Putting these together leads to a contradiction. Hence this $2$-player Cox
process Hotelling game is not an ordinal potential game.

\subsection{Nash equilibria may not exist}	\label{subsec:Nonexistence}

Consider a $2$-player Cox process Hotelling game
$(D,\eta,\rho_A,\rho_B)$.
The results of Theorems \ref{thm:purity2}, 
\ref{thm:pureandunique2} and \ref{thm:proportional2} are
consistent with that of 
Theorem \ref{thm:unique2} in the case where $D$ is compact,
admits a transitive group of metric-preserving automorphisms,
and $\eta$ is an invariant measure under the action of this group. 
This might lead one to expect that the constant intensity pair
$(\barrho_A, \barrho_B)$ is a Nash equilibrium for a
general $2$-player Cox process Hotelling game
$(D, \eta, \rho_A, \rho_B)$. The following simple example shows
that this is not the case.

\begin{exm}		\label{exm:unitinterval}
Let $D$ be the interval $[- \frac{1}{2}, \frac{1}{2}]$ of the real line,
with $\eta$ being the Lebesgue measure restricted to $D$. 
Then, for 
every $2$-player Cox process Hotelling game $(D,\eta,\rho_A,\rho_B)$,
the constant 
intensity pair $(\barrho_A, \barrho_B)$ is not a Nash
equilibrium of the game. 
%Let $\rho_A = \rho_B = \frac{1}{2}$.
%so that $\rho := \rho_A + \rho_B$ equals $1$.
%One can see that the constant 
%intensity pair $(\barrho_A, \barrho_B)$ is not a Nash
%equilibrium for the game. 

To see this, first note that 
$\eta(D) = 1$, so $\barrho_A = \rho_A$ and $\barrho_B = \rho_B$.
Recall that $\rho := \rho_A + \rho_B$.
From Theorem \ref{thm:ifNEexists},
%to show this 
it suffices to find $g \in \mcC(\rho)$, i.e.  $g: [- \frac{1}{2}, \frac{1}{2}] \to \mbbR_+$ with $\int_{u = -\frac{1}{2}}^{\frac{1}{2}} g(u) du = \rho$,
such that
\begin{equation}		\label{eq:dominator}
\int_{x = -\frac{1}{2}}^{\frac{1}{2}} g(x) \int_{y = - \frac{1}{2}}^{\frac{1}{2}} 
e^{ - \rho \eta(B(y \rightarrow x))}  dy dx
>
\int_{x = -\frac{1}{2}}^{\frac{1}{2}}
\rho \int_{y = - \frac{1}{2}}^{\frac{1}{2}} 
e^{ - \rho \eta(B(y \rightarrow x))}  dy dx,
\end{equation}
where we have written $dx$ and $dy$ in the integrals instead of
$\eta(dx)$ and $\eta(dy)$ respectively, because $\eta$ is the
Lebesgue measure. By Equation \eqref{eq:tautology} the integral 
on the right hand side of Equation \eqref{eq:dominator} is 
%$1 - \frac{1}{e}$. 
$1 - e^{-\rho}$.
For the integral on the left hand side of 
Equation \eqref{eq:dominator}, let us first replace 
$g(x) dx$ by $\rho \delta_0(dx)$ where $\delta_0$ is the measure on 
$[- \frac{1}{2}, \frac{1}{2}]$ giving mass $1$ to the point at the 
origin, i.e. let us consider the integral 
\begin{eqnarray*}
&& \rho \int_{x = -\frac{1}{2}}^{\frac{1}{2}} \int_{y = - \frac{1}{2}}^{\frac{1}{2}} 
e^{ - \rho \eta(B(y \rightarrow x))}  dy \delta_0(dx)\\
&&~~~~~~~ = \rho \int_{- \frac{1}{2}}^{\frac{1}{2}} e^{ - \rho \eta(B(y \rightarrow 0))} dy\\
&&~~~~~~~ = 2 \rho \int_0^{\frac{1}{4}} e^{-2 \rho y} dy  
+ 2 \rho \int_{\frac{1}{4}}^{\frac{1}{2}} e^{-\frac{\rho}{2}} dy\\
&&~~~~~~~ 
%= 1 - e^{-\frac{1}{2}} + \frac{1}{2} e^{- \frac{1}{2}} 
= 1 - e^{-\frac{\rho}{2}} +
\frac{\rho}{2} e^{-\frac{\rho}{2}}.
\end{eqnarray*}
For all $\rho > 0$
this integral is strictly bigger than 
%$1 - \frac{1}{e}$. 
$1 - e^{ - \rho}$.
It follows that we can find 
$g \in \mcC(\rho)$ to get the strict inequality in Equation 
\eqref{eq:dominator}, as desired.
\hfill $\Box$

%In Appendix \ref{app:alt-unit-interval} we give
%a direct proof of the claim, which does not rely on 
%the result of Theorem \ref{thm:ifNEexists} on the structure of
%Nash equilibria.

%of the more general 
%fact that if $D$ is the interval $[- \frac{1}{2}, \frac{1}{2}]$ 
%of the real line, 
%with $\eta$ being the Lebesgue measure restricted to $D$, then for 
%every $2$-player Cox process Hotelling game $(D,\eta,\rho_A,\rho_B)$
%the constant 
%intensity pair $(\barrho_A, \barrho_B)$ is not a Nash
%equilibrium of the game. 

\end{exm}

In the $2$-player game considered in Example \ref{exm:unitinterval},
one can in fact conclude that there is no Nash equilibrium
when the sum of the intensities of the two players is sufficiently small.
We state this formally.

\begin{thm}		\label{thm:NoNE}
Let $D$ be the interval $[- \frac{1}{2}, \frac{1}{2}]$ of the real line,
with $\eta$ being the Lebesgue measure restricted to $D$. 
%Let $\rho_A = \rho_B = \frac{1}{2}$.
Then the $2$-player Cox process Hotelling game 
$(D,\eta,\rho_A,\rho_B)$ does not admit a Nash equilibrium
when $\rho := \rho_A + \rho_B < \log_e 4$.
\end{thm}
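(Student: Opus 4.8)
The plan is to use the characterization in Theorem~\ref{thm:ifNEexists}. Write $c := \frac{1-e^{-\rho}}{\rho}$ and, for $f \in \mcC(\rho)$, set
\[
h_f(x) := \int_D e^{-\int_{B(y \to x)} f\,d\eta}\,\eta(dy).
\]
The function $h_f$ is bounded by $1$ and continuous in $x$ (since $f\eta$ is non-atomic, $r\mapsto (f\eta)(\{z:d(z,y)\le r\})$ is continuous, and one applies dominated convergence), so $\sup_{g\in\mcC(\rho)}\int_D g\,h_f\,d\eta = \rho\sup_{x\in D}h_f(x)$, while $\int_D f\,h_f\,d\eta = \eta(D)(1-e^{-\rho}) = 1-e^{-\rho}$ by Lemma~\ref{lem:totalvalue}. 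Hence the inequality \eqref{eq:dominate} for a given $f$ is equivalent to $\sup_x h_f(x)\le c$, and by Theorem~\ref{thm:ifNEexists} the game $(D,\eta,\rho_A,\rho_B)$ admits a Nash equilibrium if and only if there is an $f\in\mcC(\rho)$ with $h_f(x)\le c$ for every $x\in[-\tfrac12,\tfrac12]$. The goal is thus to show no such $f$ exists once $\rho<\log 4$; this strengthens Example~\ref{exm:unitinterval}, which exhibits one bad deviation against the constant intensity.

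I would first make two reductions. Since any Nash equilibrium is unique (Theorem~\ref{thm:proportionalN}) and the reflection $x\mapsto -x$ is a measure-preserving isometry of $D$ carrying Nash equilibria to Nash equilibria, the total intensity $f$ of an equilibrium must be symmetric about $0$; writing $G(t):=\int_{-1/2}^t f\,d\eta$, symmetry reads $G(-t)=\rho-G(t)$. Next, splitting the integral defining $h_f(x)$ according to whether the ball $B(y\to x)$ lies to the left or the right of $x$ and whether it is clipped by an endpoint, and changing variables on its free endpoint, one obtains the closed form
\[
h_f(x) = \frac{1/2-x}{2}\,e^{G(x)-\rho} + \frac{e^{G(x)}}{2}\int_x^{1/2} e^{-G(s)}\,ds + \frac{e^{-G(x)}}{2}\int_{-1/2}^{x} e^{G(s)}\,ds + \frac{x+1/2}{2}\,e^{-G(x)},
\]
valid for all $x\in[-\tfrac12,\tfrac12]$. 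For symmetric $f$, putting $g(x):=G(x)-\tfrac{\rho}{2}$ (nondecreasing, $g(0)=0$, $g(\tfrac12)=\tfrac{\rho}{2}$), this collapses on $[0,\tfrac12]$ to
\[
h_f(x) = h_f(0)\cosh g(x) - x\,e^{-\rho/2}\sinh g(x) - \int_0^x \sinh\big(g(x)-g(s)\big)\,ds,
\]
with $h_f(0)=\tfrac12 e^{-\rho/2}+\int_0^{1/2}e^{-g(s)}\,ds$ and $h_f(\tfrac12)=e^{-\rho/2}\int_0^{1/2}\cosh g(s)\,ds+\tfrac12 e^{-\rho}$.

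The heart of the argument is then to show that the constraints $h_f(x)\le c$ at $x=0$, at $x=\tfrac12$, and at a suitably chosen interior point $x^\star$ cannot all hold when $\rho<\log 4$. The mechanism is the tension already present in Example~\ref{exm:unitinterval}: forcing $h_f(0)\le c$ forces $\int_0^{1/2}e^{-g}$ to be small, i.e.\ $g$ to rise quickly off $0$, which makes the dominant term $h_f(0)\cosh g(x^\star)$ large at a moderate $x^\star$, while the two subtracted terms — controlled by $x^\star$ and by $\int_0^{x^\star}\sinh(g(x^\star)-g(s))\,ds$, the latter being small precisely when $g$ has already risen — are too small to compensate for small $\rho$. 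Concretely I would pick $x^\star$ so that $g(x^\star)$ has a prescribed value, couple $h_f(0)$ with $h_f(\tfrac12)$ via the Cauchy--Schwarz bound $\big(\int_0^{1/2}e^{-g}\big)\big(\int_0^{1/2}e^{g}\big)\ge\tfrac14$, and reduce everything to a one-variable inequality whose failure should be exactly $e^{-\rho/2}>\tfrac12$, i.e.\ $\rho<\log 4$ (note the recurring role of $e^{-\rho/2}$, as in the Example). I expect the main obstacle to be the choice of $x^\star$ and the bookkeeping needed to make the threshold come out as $\log 4$ rather than something weaker; the remaining ingredients (continuity of $h_f$, the change-of-variables giving the closed form, and the symmetry reduction) are routine.
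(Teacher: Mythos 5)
Your setup is sound: the reduction of \eqref{eq:dominate} to the condition $\sup_x h_f(x)\le c$ with $c=(1-e^{-\rho})/\rho$ is a correct (and clean) reading of Theorem~\ref{thm:ifNEexists}; the symmetry-plus-uniqueness argument forcing the equilibrium aggregate $f$ to be even is exactly the reduction the paper makes; and your closed-form expression for $h_f(x)$ in terms of $G$, together with its specialization to symmetric $f$ via $g=G-\rho/2$, checks out. The problem is that the proof stops exactly where the theorem starts: the entire quantitative content --- that the constraints $h_f(0)\le c$, $h_f(\tfrac12)\le c$ and $h_f(x^\star)\le c$ are incompatible when $\rho<\log_e 4$ --- is only announced as a plan. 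You do not specify $x^\star$, do not carry out the Cauchy--Schwarz coupling, and do not produce the one-variable inequality; you say yourself that making the threshold come out at $\log_e 4$ is the ``main obstacle.'' Nothing in the proposal certifies that this program closes at $\log_e 4$ rather than on some smaller (or empty) range of $\rho$, so as written this is a gap, not a proof. Note also that your target $\sup_x h_f>c$ is strictly stronger than mere non-constancy of $h_f$, and the interaction of the three constraints with the normalization $\int_D f\,h_f\,d\eta=\rho c$ is precisely where the unfinished work lies.

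For comparison, the paper's argument is more elementary and needs neither the closed form nor an interior test point. After the same symmetry reduction, it shows $h_f$ cannot be constant by comparing only $h_f(0)$ and $h_f(-\tfrac12)$, using crude pointwise bounds on the integrands $\psi_0(y)$ and $\psi_{-1/2}(y)$: on $[-\tfrac12,-\tfrac14]$ one has $\psi_0(y)=e^{-\rho/2}$ and $\psi_{-1/2}(y)\le 1$; on $[-\tfrac14,0]$ one has $\psi_0\ge\psi_{-1/2}$; and on $[0,\tfrac12]$ one has $\psi_0(y)\ge e^{-\rho/2}$ while $\psi_{-1/2}(y)=e^{-\rho}$. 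Summing gives $h_f(0)-h_f(-\tfrac12)\ge\tfrac12(e^{-\rho/2}-e^{-\rho})-\tfrac14(1-e^{-\rho/2})=\tfrac14(1-e^{-\rho/2})(2e^{-\rho/2}-1)$, which is positive exactly when $e^{-\rho/2}>\tfrac12$, i.e.\ $\rho<\log_e 4$ --- this is where the threshold comes from. Non-constancy of $x\mapsto h_f(x)$ is then converted into a profitable deviation by concentrating the deviating density near a maximizer of $h_f$. If you want to salvage your route, the quickest fix is to drop $x^\star$ and the Cauchy--Schwarz step and instead prove $h_f(0)>h_f(\tfrac12)$ by these interval-by-interval bounds.
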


\bpf

%With this observation in place, note that 
Theorem \ref{thm:ifNEexists} tells us that, to prove that
a Nash equilibrium does not exist for this $2$-player game,
it suffices to show that for every 
$f \in \mcC(\rho)$ 
%satisfying $f(u) = f(-u)$
%for all $u \in [- \frac{1}{2}, \frac{1}{2}]$ 
there is some $g \in \mcC(\rho)$ 
such that
\begin{equation}		\label{eq:gendom}
\int\limits_{x = -\frac{1}{2}}^{\frac{1}{2}} g(x) \int\limits_{y = - \frac{1}{2}}^{\frac{1}{2}} 
e^{ - \int_{ u \in B(y \rightarrow x)} f(u) du}  dy dx
>
\int\limits_{x = -\frac{1}{2}}^{\frac{1}{2}} f(x) \int\limits_{y = - \frac{1}{2}}^{\frac{1}{2}} 
e^{ - \int_{ u \in B(y \rightarrow x)} f(u) du}  dy dx.
\end{equation}

%to be the function
Let $f \in \mcC(\rho)$.
For $x \in [- \frac{1}{2}, \frac{1}{2}]$, define
$\psi_x ~:~ [- \frac{1}{2}, \frac{1}{2}] \to \mbbR_+$ via:
\[
\psi_x(y) := e^{ - \int_{ u \in B(y \rightarrow x)} f(u) du}.
\]
%be an even function. 
 Suppose
$\int_{y = - \frac{1}{2}}^{\frac{1}{2}} \psi_x(y) dy$ is not 
constant in $x$.
%for $x \in [- \frac{1}{2}, \frac{1}{2}]$, 
%then 
%we can find $g \in \mcC(1)$ satisfying the strict inequality in 
%Equation \eqref{eq:gendom}. To see this, 
Let 
$x^* \in \mbox{arg max}_x \int_{y = - \frac{1}{2}}^{\frac{1}{2}} \psi_x(y) dy$, which exists because 
$\int_{y = - \frac{1}{2}}^{\frac{1}{2}} \psi_x(y) dy$ is continuous
in $x$ over 
%$x \in [- \frac{1}{2}, \frac{1}{2}]$. 
$[- \frac{1}{2}, \frac{1}{2}]$. 
Since 
$$\int_{y = - \frac{1}{2}}^{\frac{1}{2}} \psi_x(y) dy$$ is not 
constant in $x$ over $x \in [- \frac{1}{2}, \frac{1}{2}]$, 
and since $\int_{x = - \frac{1}{2}}^{\frac{1}{2}} f(x) dx = \rho$,
%we would have 
this implies that
\[
\rho \int_{y = - \frac{1}{2}}^{\frac{1}{2}}  e^{ - \int_{ u \in B(y \rightarrow x^*)} f(u) du}dy
> \int_{x = -\frac{1}{2}}^{\frac{1}{2}} f(x) \int_{y = - \frac{1}{2}}^{\frac{1}{2}} 
e^{ - \int_{ u \in B(y \rightarrow x)} f(u) du}  dy dx,
\]
from which we can conclude the existence of 
$g \in \mcC(\rho)$ satisfying the strict inequality in 
Equation \eqref{eq:gendom}. 

%Let $\rho := \rho_A + \rho_B$.
From Theorem \ref{thm:ifNEexists}, 
we also know that if a Nash 
equilibrium exists it must be pure and of the form
$(\frac{\rho_A}{\rho} f, \frac{\rho_B}{\rho} f)$ 
for some $f \in \mcC(\rho)$. 
We claim that it must further be the case that $f(u) = f(-u)$
for all $u \in [- \frac{1}{2}, \frac{1}{2}]$, i.e. that 
$f$ is an {\emph {even}} function. This is because,
by symmetry, if $(\frac{\rho_A}{\rho} f, \frac{\rho_B}{\rho} f)$ 
is a Nash equilibrium
then so is $(\frac{\rho_A}{\rho} \tilde{f}, \frac{\rho_B}{\rho} \tilde{f})$ ,
where $\tilde{f}(u) := f(-u)$ (so we also have 
$\tilde{f} \in \mcC(\rho)$), and then, because the Nash equilibrium
is unique, it must be the case that $\tilde{f} = f$.

Thus it suffices to show that 
when $\rho < \log_e 4$ it is impossible to find an even function
$f \in \mcC(\rho)$ such that
$\int_{y = - \frac{1}{2}}^{\frac{1}{2}} \psi_x(y) dy$ is
constant in $x$. We will do this by establishing that for every 
even function $f \in \mcC(\rho)$ we have 
\begin{equation}		\label{eq:zero-and-half}
\int_{y = - \frac{1}{2}}^{\frac{1}{2}} \psi_{-\frac{1}{2}}(y) dy
< 
\int_{y = - \frac{1}{2}}^{\frac{1}{2}} \psi_0(y) dy.
\end{equation}

%To show the strict inequality in Equation \ref{eq:zero-and-half}
Observe that $\psi_0(y)$ is an even function
of $y \in [- \frac{1}{2}, \frac{1}{2}]$. Further,
we have $\psi_0(y) \ge e^{- \frac{\rho}{2}}$ for all 
$y \in [- \frac{1}{2}, \frac{1}{2}]$, and we have 
$\psi_0(y) = e^{- \frac{\rho}{2}}$ for 
$- \frac{1}{2} \le y \le - \frac{1}{4}$.

Observe also that $\psi_{-\frac{1}{2}}(y)$ is nonincreasing 
over $y \in [- \frac{1}{2}, \frac{1}{2}]$, with 
$\psi_{-\frac{1}{2}}(- \frac{1}{2}) = 1$,
 $\psi_{-\frac{1}{2}}(- \frac{1}{4}) = e^{- \frac{\rho}{2}}$, and
 $\psi_{-\frac{1}{2}}(y) = e^{-\rho}$ for $0 \le y \le \frac{1}{2}$.
 
 From these two sets of observations, we have 
 $\psi_{-\frac{1}{2}}(y) \ge \psi_0(y)$ for 
 $y \in [- \frac{1}{2}, -\frac{1}{4}]$ and 
 $\psi_{-\frac{1}{2}}(y) \le \psi_0(y)$ for 
 $y \in [- \frac{1}{4}, \frac{1}{2}]$. Further, we have 
 \[
 \int_{y = - \frac{1}{2}}^{- \frac{1}{4}} \left( \psi_{-\frac{1}{2}}(y) - \psi_0(y) \right) dy \le \frac{1}{4} (1 - e^{-\frac{\rho}{2}}),
 \]
 and 
 \[
 \int_{y = 0}^{\frac{1}{2}} \left( \psi_0(y) -\psi_{-\frac{1}{2}}(y) \right) dy \ge \frac{1}{2} ( e^{-\frac{\rho}{2}} - e^{-\rho}),
 \]
 while we also have 
 \[
 \int_{y = - \frac{1}{4}}^0 \left( \psi_0(y) -\psi_{-\frac{1}{2}}(y) \right) dy \ge 0.
 \]
From this we conclude that 
\[
 \int_{y = - \frac{1}{2}}^{\frac{1}{2}} \left( \psi_0(y) -\psi_{-\frac{1}{2}}(y) \right) dy \ge \frac{1}{2} ( e^{-\frac{\rho}{2}} - e^{-\rho}) - 
 \frac{1}{4} (1 - e^{-\frac{\rho}{2}}) > 0,
 \]
 if $0 < \rho < \log_e 4$, 
 which establishes the strict inequality in Equation 
 \eqref{eq:zero-and-half} and completes the proof.
 A more careful analysis will increase the range of $\rho$ for which one can prove that the game does not admit a Nash equilibrium.

\iffalse
By Equation \eqref{eq:tautology} the integral 
on the right hand side of Equation \eqref{eq:gendom} is 
$1 - \frac{1}{e}$. For the integral on the left hand side of 
Equation \eqref{eq:gendom}, let us first replace 
$g(x) dx$ by $\delta_0(dx)$, where $\delta_0$ is the measure on 
$[- \frac{1}{2}, \frac{1}{2}]$ giving mass $1$ to the point at the 
origin, i.e. let us consider the integral 
\[
\int_{x = -\frac{1}{2}}^{\frac{1}{2}} \int_{y = - \frac{1}{2}}^{\frac{1}{2}} 
e^{ - \int_{ u \in B(y \rightarrow x)} f(u) du}   dy \delta_0(dx),
\]
which equals
\begin{equation}		\label{eq:zeroed}
\int_{- \frac{1}{2}}^{\frac{1}{2}} 
e^{ - \int_{ u \in B(y \rightarrow 0)} f(u) du}  dy.
\end{equation}
\fi

\epf

\subsection{Restriced Cox process Hotelling games}	\label{subsec:Restricted}

A more insightful characterization than
the one in Theorem \ref{thm:ifNEexists} of when
Nash equilibria exist in 
Cox process Hotelling games
remains an interesting open problem.
The main technical difficulty in proving the existence of Nash equilibria in such games is that 
the action spaces of the individual players, which are of the
form $\mcC(\rho_i)$, where $\rho_i > 0$
is the intensity budget of player $i$, are not compact 
when 
%$\mcC(\rho)$ is 
endowed with the topology of 
weak convergence. Indeed, we have seen 
in Section \ref{subsec:Nonexistence} that
Nash equilibria may not exist in some cases.
%This is, of course, well-known, but for 
%completeness we justify this statement with an example.

%\begin{exm}		\label{exm:notcompact}
%Example of noncompactness.
%\end{exm}

To better understand the question of 
when Nash equilibria exist in such games,
we therefore propose to study a family of {\em restricted Cox Process Hotelling games},
where the action space of each individual player is now a compact
set.
%subset
%of sets of the type $\mcC(\rho)$. 
%of its original action space.
%As we will see below, 
The restrictions can be imposed in such a way that a unique Nash equilibria
will be guaranteed to exist in each such restricted game, 
%we will
%also be able to prove that 
%there will be a unique pure
%Nash equilibrium in each such restricted game 
it will be pure
and, if the restrictions imposed on the individual players are proportional in a sense made precise below, this unique pure Nash equilibrium will be of proportional form. Further, by varying the restriction, we can vary the compact
action space of each player 
%by the choice of a positive real parameter
%(each choice of the profile of parameters, one for each player, defines a different
%restricted game) 
in such a 
way that the union
%, for each player, 
over all such choices 
%of
%the parameter of these spaces
%its parameter
is the full space of allowed actions for that player, namely $\mcC(\rho_i)$
for player $i$ having a total intensity budget of $\rho_i$.
If a Nash equilibrium did exist for the original Cox process Hotelling game, then this guarantees
it would be discovered as the Nash equilibrium 
%for the restricted 
%game for 
%a 
%parameter profile comprised of sufficiently large values of the individual parameters. 
for some profile of restrictions on the 
action spaces of the individual players,
i.e. in one of the restricted Cox process Hotelling games that we consider.
Pursuing this direction, which we leave as a topic for future research, may give more insight into what the characterization in Theorem 
\ref{thm:ifNEexists} is actually saying.

To carry out this program, 
we first 
%need to 
demonstrate, for
each $\rho > 0$, 
%the existence of such a parametrized
a family of compact subsets of 
$\mcC(\rho)$ whose union is
$\mcC(\rho)$. 
Recall that $\mcC(\rho)$ 
is a subset of
$L^1(\eta)$, where 
$f \in \mcC(\rho)$ is 
identified with the measure $f \eta$
on $D$, and $\mcC(\rho)$ endowed with the 
topology of weak convergence of measures
which it inherits as a subset of $\mcM(D)$. We will now consider
$L^1(\eta)$ with its weak topology
defined by considering it to be a Banach space with Banach dual $L^\infty(\eta)$,
see \cite{Rudin} or \cite[pg. 44]{Diestel}. To avoid confusion,
recall that the topology of weak convergence on $\mcC(\rho)$ is called the narrow topology in the theory of Banach spaces, see e.g. \cite[Vol. 1, pg. 176]{Bogachev}, and is weaker than the weak topology.

From the theorem of Dunford and Pettis,
\cite[Thm. 3]{Diestel}, \cite{DunfordPettis} the closure in the weak topology of a subset of $L^1(\eta)$ is compact in the weak topology iff it is 
uniformly integrable. Here we recall, 
see e.g. \cite[Vol. 1, Defn. 4.5.1.]{Bogachev}, \cite[pg. 41]{Diestel} that a subset $S \subseteq L^1(\eta)$ is called uniformly integrable if 
\[
\lim_{c \to \infty} \sup_{f \in S} 
\int_D |f(x)| 1(|f(x)| \ge c) \eta(dx) = 0.
\]
Further, by the theorem of 
de la Vall\'{e}e Poussin
\cite{Chafai},
\cite[Thm. 2]{Diestel}
for $S \subseteq L^1(\eta)$ to be uniformly integrable it is necessary and sufficient that there be a nondecreasing convex function $\Theta: \mbbR_+ \to \mbbR_+$, with $\Theta(0) = 0$ and 
$\lim_{x \to \infty} \frac{\Theta(x)}{x} = \infty$, such that 
\[
\sup_{f \in S} \int_D \Theta(|f(x)|) \eta(dx) < \infty.
\]

For $\rho > 0$, 
$\Theta: \mbbR_+ \to \mbbR_+$
a nondecreasing convex function with 
$\Theta(0) = 0$ and 
$\lim_{x \to \infty} \frac{\Theta(x)}{x} = \infty$, and
$0 < K < \infty$, we propose to consider the subset of $\mcC(\rho)$ defined by
\begin{equation}		\label{eq:CrhophiK}
\mcC(\rho,\Theta,K) := \{f:D \to \mbbR_+ \mbox{ s.t. } \int_D f(x) \eta(dx) = \rho, \int_D \Theta(f(x)) \eta(dx) \le K \}.
\end{equation}

It can be checked that $\mcC(\rho,\Theta,K)$ is a closed subset of $L^1(\eta)$ in the weak topology.
By the theorem of 
de la Vall\'{e}e Poussin,
$\mcC(\rho,\Theta,K)$ is uniformly integrable, so by the Dunford-Pettis theorem it is a compact subset of 
$L^1(\eta)$ in the weak topology.
Since the weak topology on $L^1(\eta)$ 
is stronger than the topology of weak convergence (i.e. the narrow topology)
on $L^1(\eta)$, $\mcC(\rho,\Theta,K)$
is compact in the topology of weak convergence on $L^1(\eta)$.

For every $f \in L^1(\eta)$ it can be checked that there is some 
nondecreasing convex function
$\Theta: \mbbR_+ \to \mbbR_+$
with $\Theta(0) = 0$ and 
$\lim_{x \to \infty} \frac{\Theta(x)}{x} = \infty$, such that 
$\int_D \Theta(|f(x)|) \eta(dx) < \infty$.
It follows that the union of 
$\mcC(\rho,\Theta,K)$ over all choices of 
$\Theta$ and $K$ equals $\mcC(\rho)$. 
We thus have a family of compact 
subsets of 
$\mcC(\rho)$ whose union is
$\mcC(\rho)$. 

From the convexity of $\Theta$ it is also straightforward to show that 
each $\mcC(\rho,\Theta,K)$ is a convex subset of $L^1(\eta)$. As a closed subset of $\mcM(D)$
in the topology of weak convergence, $\mcC(\rho,\Theta,K)$ is a Borel subset of $\mcM(D)$, so we are able discuss probability measures on $\mcC(\rho,\Theta,K)$.

By an $N$-player {\em restricted Cox process Hotelling game} we mean a game with player $i$,
for $1 \le i \le N$, having the intensity budget $\rho_i > 0$ and the space of pure actions
some $\mcC(\rho_i, \Theta_i, K_i)$, which, as we have seen, is a compact subset of $\mcC(\rho_i)$
in the topology of weak convergence. 
Here, for each $1 \le i \le N$,
$\Theta_i: \mbbR_+ \to \mbbR_+$
is a nondecreasing convex function
with $\Theta_i(0) = 0$ and 
$\lim_{x \to \infty} \frac{\Theta_i(x)}{x} = \infty$, and
$0 < K_i < \infty$. 
Suppose the individual players play the pure actions
$f_i \in \mcC(\rho_i, \Theta_i, K_i)$ for $1 \le i \le N$.
Let $\Phi_i$ be a Poisson point process on $D$ with intensity measure
$f_i \eta$,
%$f_i\cdot \eta(.)$,
with these processes being mutually independent for $1 \le i \le N$,
and let $\Phi := \sum_{i=1}^N \Phi_i$.
We think of the points of $\Phi_i$ as the points of player $i$, since these
result from the choice of $f_i$, which was made by that
player.
Then, as before, the value of player $i$, denoted
$V_i(f_1, \ldots, f_N)$, is given by
$$V_i(f_1, \ldots, f_N) := \mathbb{E} [\sum_{x\in \Phi_i} \eta(W_\Phi(x))].$$
The
expectation is with respect to the joint law of $(\Phi_j, 1 \le j \le n)$,
which are independent.
Here, by definition, a sum over an empty set is 0.

Consider an $N$-player restricted Cox process Hotelling
game where
player $i$ has the space of actions
$\mcC(\rho_i, \Theta_i, K_i)$. Any mixed strategy $N$-tuple in this game can be written as 
$(f_1(M_1), \ldots, f_N(M_N))$, where 
$(M_i \in \mcM_j,  1 \le i \le N)$ are independent
random variables representing the randomizations used by 
the individual players in implementing their randomized strategies, and
$f_i(m_i) \in \mcC(\rho_i, \Theta_i, K_i)$ is the 
choice of action of player $i$ in case the realization of
her random variable $M_i$ is $m_i$.
%for $1 \le j \le N$.
The value of player $i$ in such a mixed strategy is 
$\mbbE[ V_i(f_1(M_1), \ldots, f_N(M_N))]$, the expectation being taken
with respect to the joint distribution of $(M_j, 1 \le j \le N)$, which are
independent. 
The vector of independently randomized strategies 
\[
(f_1(M_1), \ldots, f_N(M_N)) \in 
\mcC(\rho_1, \Theta_1, K_1) \times  \ldots \times \mcC(\rho_N, \Theta_N, K_N)
\]
is called a {\em Nash equilibrium} of the game
if, for all $g_j \in \mcC(\rho_j, \Theta_j, K_j)$, $1 \le j \le N$, we
have, for all $1 \le i \le N$, 
\begin{equation}		\label{eq:RestNashEqForPlayeri}
\mbbE[V_i(f_1(M_1), \ldots, f_N(M_N))] \ge 
\mbbE[V_i(g_i, (f_j(M_j), j \neq i))].
\end{equation}
%In words, each player $i$ perceives no advantage in playing the 
%strategy $g_i$ instead of its randomized strategy $f_i(M_i)$, 
%given that the other players, i.e. , the players $j \neq i$, are playing the 
%individually randomized strategies $(f_j(M_j), j \neq i)$.
Since each $\mcC(\rho_i, \Theta_i, K_i)$ is compact and each 
$V_i: \prod_{j=1}^N \mcC(\rho_j, \Theta_j, K_j) \to \mbbR_+$ is continuous, the existence of a mixed strategy Nash equilibrium for every restricted $N$-player Cox process Hotelling game is guaranteed \cite{Glicksberg}.

We now formally state and prove that every
restricted Cox process Hotelling game
has unique Nash equilibrium, and that this is comprised of pure strategies. The following
Theorem is a combination of
the analog of 
Theorem \ref{thm:purityN}
and the $N$-player version of 
Theorem \ref{thm:pureandunique2}
for restricted Cox process Hotelling games.

\begin{thm} 		 \label{thm:Restrpureandunique}
Consider an $N$-player restricted Cox process Hotelling game on the Polish space $D$ with base measure $\eta$ where player $i$,
for $1 \le i \le N$, has the intensity budget $\rho_i > 0$ and the space of pure actions
$\mcC(\rho_i, \Theta_i, K_i)$, where
$\Theta_i: \mbbR_+ \to \mbbR_+$
is a nondecreasing convex function
with $\Theta_i(0) = 0$ and 
$\lim_{x \to \infty} \frac{\Theta_i(x)}{x} = \infty$, and
$0 < K_i < \infty$. 
Suppose $(f_1(M_1), \ldots, f_N(M_N))$
is a Nash equilibrium of the game, which we know exists. Then there exist
$g_i \in \mcC_i$, $1 \le i \le N$, such that 
\[
\mbbP((f_1(M_1), \ldots, f_N(M_N) = (g_1, \ldots, g_N)) = 1,
\]
i.e. the Nash equilibrium is a pure strategy Nash equilibrium. Furthermore, if $(g_1, \ldots, g_N)$ and $(h_1, \ldots, h_N)$ are two pure strategy Nash equilibria for the game, then 
$g_i = h_i$ for all $1 \le i \le N$, i.e. 
the Nash equilibrium is unique.
\end{thm}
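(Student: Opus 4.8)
The plan is to establish the two assertions --- purity of any Nash equilibrium, and uniqueness among the pure ones --- separately, reusing the machinery already in place.

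\textbf{Purity.} I would run the argument of Theorem \ref{thm:purityN} essentially verbatim, the only new ingredient being a check forced by the restriction. Fix a Nash equilibrium $(f_1(M_1),\dots,f_N(M_N))$ of the restricted game and fix $i$. Put $\bar f_i := \mbbE[f_i(M_i)]$, the expectation taken in $L^1(\eta)$ (pointwise in $x$; it is well defined because $\mbbE[\int_D f_i(M_i)(x)\,\eta(dx)] = \rho_i < \infty$). The crucial point is that $\bar f_i$ is again an admissible action, i.e. $\bar f_i \in \mcC(\rho_i,\Theta_i,K_i)$: the total-mass constraint in \eqref{eq:CrhophiK} is linear and hence preserved under averaging, while convexity of $\Theta_i$, Jensen's inequality and Fubini give $\int_D \Theta_i(\bar f_i(x))\,\eta(dx) \le \mbbE\bigl[\int_D \Theta_i(f_i(M_i)(x))\,\eta(dx)\bigr] \le K_i$. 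Conditioning on the randomizations $(M_j, j\neq i)$, which are independent of $M_i$, and applying Jensen's inequality to the concave map $h \mapsto V_i(h,(f_j(M_j),j\neq i))$ (Lemma \ref{lem:convexityN}) yields $\mbbE[V_i(f_1(M_1),\dots,f_N(M_N))] \le \mbbE[V_i(\bar f_i,(f_j(M_j),j\neq i))]$. Since $\bar f_i$ is admissible, the Nash property forces this to be an equality, and then the strict concavity part of Lemma \ref{lem:convexityN} forces $f_i(M_i) = \bar f_i$ almost surely. Setting $g_i := \bar f_i$ for each $i$ gives the purity claim. (As in Theorem \ref{thm:purityN}, the form of Jensen's inequality used here for the $L^1$-valued randomization is legitimate because $V_i$ is continuous on the compact convex set $\mcC(\rho_i,\Theta_i,K_i)$.)

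\textbf{Uniqueness.} Let $(g_1,\dots,g_N)$ and $(h_1,\dots,h_N)$ be two pure Nash equilibria. For each $i$, by Lemma \ref{lem:convexityN} the map $t \mapsto V_i(t h_i + (1-t) g_i,(g_j,j\neq i))$ is strictly concave on $[0,1]$ and, since $g_i$ is player $i$'s (necessarily unique) best response to $(g_j, j\neq i)$, it is maximized at $t=0$; hence $V_i(h_i,(g_j,j\neq i)) \le V_i(g_1,\dots,g_N)$, with equality iff $g_i = h_i$. Summing over $i$ and invoking the conservation law \eqref{eq:conservationN}, $\sum_i V_i(h_i,(g_j,j\neq i)) \le \eta(D)(1 - e^{-\rho})$, strictly unless $g_i = h_i$ for all $i$; by symmetry $\sum_i V_i(g_i,(h_j,j\neq i)) \le \eta(D)(1-e^{-\rho})$, again strictly unless all $g_i = h_i$. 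So it suffices to prove the matching lower bound $\sum_i V_i(h_i,(g_j,j\neq i)) + \sum_i V_i(g_i,(h_j,j\neq i)) \ge 2\,\eta(D)(1-e^{-\rho})$, which then forces $g_i = h_i$ for all $i$. For $N=2$ this bound holds with equality: applying the conservation law to the two profiles $(h_1,g_2)$ and $(g_1,h_2)$ and regrouping the four terms shows the left-hand side equals $2\,\eta(D)(1-e^{-\rho})$ --- this is exactly Theorem \ref{thm:pureandunique2}, whose proof uses nothing about $\mcC(\rho_i)$ beyond convexity of the action set, strict concavity of the value, and the constant-sum law, and so transfers verbatim to $2$-player restricted games.

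\textbf{The main obstacle.} For $N>2$ the constant-sum bookkeeping no longer closes up by a mere regrouping, and --- in contrast to the unrestricted setting --- the proportional-reaction deviation $\tfrac{\rho_i}{\sum_{j\neq i}\rho_j}\sum_{j\neq i} f_j$ used in Theorem \ref{thm:proportionalN} need not lie in $\mcC(\rho_i,\Theta_i,K_i)$, since the constraint function $\Theta_i$ may be violated by it (indeed, for non-proportional restrictions the unique Nash equilibrium need not be of proportional form). Consequently the matching lower bound above must be obtained directly from the value formula \eqref{eq:value}, exploiting that the kernel $\exp\!\bigl(-\int_{B(y\to x)} h(u)\,\eta(du)\bigr)$ is both monotone non-increasing and convex in the aggregate intensity $h$; organizing this estimate so that it reproduces the quantity $2\,\eta(D)(1-e^{-\rho})$ is the delicate step of the argument.
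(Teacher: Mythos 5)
Your purity argument is correct and follows the paper's route (the argument of Theorems \ref{thm:purity2} and \ref{thm:purityN}); the admissibility check that $\mbbE[f_i(M_i)]$ remains in $\mcC(\rho_i,\Theta_i,K_i)$ --- linearity for the mass constraint, Jensen and the convexity of $\Theta_i$ for the second constraint --- is precisely what the paper's remark that only ``convexity of the set of allowed actions'' is used amounts to, and you are right to make it explicit. Your uniqueness argument for $N=2$ is also correct and is a repackaging of the interchange argument of Theorem \ref{thm:pureandunique2}.

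The gap is the uniqueness claim for $N\ge 3$, which your proposal explicitly leaves open, and it is a genuine gap rather than a routine omission. The properties the paper's proof invokes (convex action sets, strict concavity of each $V_i$ in its own argument, constant sum) do not by themselves imply uniqueness once $N\ge 3$: the three-player constant-sum game on $[0,1]^3$ with $V_1=-(x-y)^2+(y-z)^2$, $V_2=-(y-z)^2+(z-x)^2$, $V_3=-(z-x)^2+(x-y)^2+c$ has each payoff strictly concave in the player's own variable, yet every diagonal point $(t,t,t)$ is a Nash equilibrium. So any correct proof of uniqueness for $N\ge 3$ must exploit the specific structure of the Cox process Hotelling payoff; in Theorems \ref{thm:proportionalN} and \ref{thm:Restrproportional} this is done through the proportional deviation $\frac{\rho_i}{\sum_{j\neq i}\rho_j}\sum_{j\neq i}f_j$, which, as you correctly observe, need not be admissible under heterogeneous restrictions $(\Theta_i,K_i)$. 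The paper's own proof of this theorem is the one-line appeal to ``the $N$-player versions of Theorems \ref{thm:purity2} and \ref{thm:pureandunique2} with the obvious modifications,'' and for the uniqueness half with $N\ge 3$ and non-proportional restrictions that modification is not supplied anywhere in the paper; your ``main obstacle'' paragraph therefore identifies a soft spot in the source as much as in your own write-up. As it stands, though, your proposal proves the stated theorem only for $N=2$ (or for proportional restrictions); to complete it you would need to supply the missing structural argument, e.g.\ a diagonal-strict-concavity computation for the functional in Equation \eqref{eq:value}.
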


\bpf
The proof is similar to those of
the $N$-player versions of 
Theorem \ref{thm:purity2},
and Theorem \ref{thm:pureandunique2},
with the obvious modifications. 
All that is being used in those proofs is the 
convexity of the set of allowed actions of each player and the strict concavity of the value function of each player in its own action when the actions of its opponents are fixed. These properties continue to hold in the restricted Cox process Hotelling games that we are now considering.
\epf

When the restrictions on the individual 
players in an $N$-player restricted Cox process Hotelling game are proportional to their allowed intensities
we can characterize the Nash equilibrium of the game, which we known by Theorem 
\ref{thm:Restrpureandunique} is unique and comprised of pure strategies, in a manner analogous to what was done in Theorem \ref{thm:proportionalN}. To define what
we mean by proportional restrictions,
given $\alpha > 0$ and 
a nondecreasing convex function
$\Theta: \mbbR_+ \to \mbbR_+$
with $\Theta(0) = 0$ and 
$\lim_{x \to \infty} \frac{\Theta(x)}{x} = \infty$, we define the function 
$\Theta^{(\alpha)}: \mbbR_+ \to \mbbR_+$
by 
\[
\Theta^{(\alpha)}(x) := \Theta(\frac{x}{\alpha}),~~x \in \mbbR_+.
\]
Note that $\Theta^{(\alpha)}: \mbbR_+ \to \mbbR_+$ is a nondecreasing convex function with $\Theta^{(\alpha)}(0) = 0$ and
$\lim_{x \to \infty} \frac{\Theta^{(\alpha)}(x)}{x} = \infty$.
We can then make the following simple observation.

\begin{lem} \label{lem:proportional}
Given $\rho > 0$, 
a nondecreasing convex function
$\Theta: \mbbR_+ \to \mbbR_+$
with $\Theta(0) = 0$ and 
$\lim_{x \to \infty} \frac{\Theta(x)}{x} = \infty$, and $0 < K < \infty$, we have
$f \in \mcC(\rho, \Theta^{(\rho)}, K)$ 
iff $\frac{f}{\rho} \in \mcC(1, \Theta, K)$.
\end{lem}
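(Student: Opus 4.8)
The plan is to simply unwind the definitions of the sets $\mcC(\cdot,\cdot,\cdot)$ and of the rescaled function $\Theta^{(\alpha)}$, and observe that the correspondence $f \leftrightarrow f/\rho$ matches up the two defining conditions term by term. There is no real obstacle here; the only thing to be careful about is keeping track of which constraint ($\rho$ versus $1$) applies to which side, and noting that $f \ge 0$ iff $f/\rho \ge 0$ since $\rho > 0$, so measurability and nonnegativity transfer freely between $f$ and $f/\rho$.

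First I would recall that, by definition \eqref{eq:CrhophiK}, $f \in \mcC(\rho, \Theta^{(\rho)}, K)$ means precisely that $f: D \to \mbbR_+$ is $\mcB$-measurable, $\int_D f(x)\,\eta(dx) = \rho$, and $\int_D \Theta^{(\rho)}(f(x))\,\eta(dx) \le K$. Then I would substitute the definition $\Theta^{(\rho)}(x) = \Theta(x/\rho)$ to rewrite the last integral as $\int_D \Theta\!\left(\frac{f(x)}{\rho}\right)\eta(dx)$.

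Next I would set $g := f/\rho$, which is a nonnegative $\mcB$-measurable function since $\rho > 0$. By linearity of the integral, $\int_D g(x)\,\eta(dx) = \frac{1}{\rho}\int_D f(x)\,\eta(dx)$, so the mass-$\rho$ constraint on $f$ is equivalent to the mass-$1$ constraint $\int_D g(x)\,\eta(dx) = 1$ on $g$. Likewise, the constraint $\int_D \Theta\!\left(\frac{f(x)}{\rho}\right)\eta(dx) \le K$ is literally the constraint $\int_D \Theta(g(x))\,\eta(dx) \le K$. Comparing with \eqref{eq:CrhophiK} applied with parameters $(1,\Theta,K)$, these two statements together say exactly $g \in \mcC(1, \Theta, K)$.

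Finally, since the map $f \mapsto f/\rho$ is a bijection (with inverse $g \mapsto \rho g$) and every step above is an equivalence, this establishes both directions: $f \in \mcC(\rho, \Theta^{(\rho)}, K)$ if and only if $f/\rho \in \mcC(1, \Theta, K)$, completing the proof.
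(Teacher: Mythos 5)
Your proof is correct and takes essentially the same approach as the paper: both simply unwind the definition of $\mcC(\rho,\Theta^{(\rho)},K)$, use $\Theta^{(\rho)}(x)=\Theta(x/\rho)$, and observe that the two defining constraints transform into each other under $f\mapsto f/\rho$.
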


\bpf
To check if $\frac{f}{\rho} \in \mcC(1, \Theta, K)$ we need to check if 
$\int_D \frac{f(x)}{\rho} \eta(dx) = 1$
and if $\int_D \Theta( \frac{f(x)}{\rho}) \eta(dx) \le K$. Equivalently, we need to check whether
$\int_D f(x) \eta(dx) = \rho$ and
$\int_D \Theta^{(\rho)}(f(x)) \eta(dx) \le K$,
i.e. whether $f \in \mcC(\rho, \Theta^{(\rho)}, K)$.
\epf

The following result characterizes the Nash equilibria of $N$-player restricted Cox process Hotelling games when the restrictions on the individual 
players are in proportion to their allowed intensities.

\begin{thm} 		 \label{thm:Restrproportional}
Consider an $N$-player restricted Cox process Hotelling game on the Polish space $D$ with base measure $\eta$ where player $i$,
for $1 \le i \le N$, has the intensity budget $\rho_i > 0$ and the space of pure actions
$\mcC(\rho_i, \Theta^{(\rho_i)}, K)$, where
$\Theta: \mbbR_+ \to \mbbR_+$
is a nondecreasing convex function
with $\Theta(0) = 0$ and 
$\lim_{x \to \infty} \frac{\Theta(x)}{x} = \infty$, and
$0 < K < \infty$. 
Then the game has a unique Nash equilibrium,
which is of the form $(f_1, \ldots, f_N)$,
where $f_i = \frac{\rho_i}{\rho} f$ for some
$f \in \mcC(\rho, \Theta^{(\rho)}), K)$, where
$\rho := \sum_{i=1}^N \rho_i$.
\end{thm}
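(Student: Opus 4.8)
The plan is to follow closely the proof of Theorem~\ref{thm:proportionalN}; the only genuinely new point is checking that a certain proportional deviation remains inside the restricted action space, and this is exactly where the proportional form $\Theta^{(\rho_i)}$ of the restrictions is used. By Theorem~\ref{thm:Restrpureandunique} the game has a unique Nash equilibrium and it is pure, so write it as $(f_1,\ldots,f_N)$ with $f_i\in\mcC(\rho_i,\Theta^{(\rho_i)},K)$. For each $i$, set $g_i:=\frac{\rho_i}{\sum_{j\neq i}\rho_j}\sum_{j\neq i}f_j$.

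First I would verify that $g_i\in\mcC(\rho_i,\Theta^{(\rho_i)},K)$. The mass constraint $\int_D g_i(x)\,\eta(dx)=\rho_i$ is immediate from $\int_D f_j(x)\,\eta(dx)=\rho_j$. For the moment constraint, write $\frac{g_i(x)}{\rho_i}=\sum_{j\neq i}\frac{\rho_j}{\sum_{k\neq i}\rho_k}\cdot\frac{f_j(x)}{\rho_j}$ as a convex combination and apply Jensen's inequality for the convex function $\Theta$; since $\Theta^{(\alpha)}(x)=\Theta(x/\alpha)$, this gives $\int_D\Theta^{(\rho_i)}(g_i(x))\,\eta(dx)\le\sum_{j\neq i}\frac{\rho_j}{\sum_{k\neq i}\rho_k}\int_D\Theta^{(\rho_j)}(f_j(x))\,\eta(dx)\le K$. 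Next I would compute $V_i(g_i,(f_j,j\neq i))$ exactly as in the proof of Theorem~\ref{thm:proportionalN}: using Equation~\eqref{eq:value} and then Equation~\eqref{eq:tautology} applied to the density $\frac{\rho}{\sum_{j\neq i}\rho_j}\sum_{j\neq i}f_j\in\mcC(\rho)$, one obtains $V_i(g_i,(f_j,j\neq i))=\frac{\rho_i}{\rho}\eta(D)(1-e^{-\rho})$.

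Since $f_i$ is a best response to $(f_j,j\neq i)$ in the restricted game and $g_i$ is a legal action, this yields $V_i(f_1,\ldots,f_N)\ge\frac{\rho_i}{\rho}\eta(D)(1-e^{-\rho})$ for every $i$. The conservation law of Equation~\eqref{eq:conservationN} still holds verbatim here, because the value formula is unchanged and $\mcC(\rho_j,\Theta^{(\rho_j)},K)\subseteq\mcC(\rho_j)$; summing the previous inequality over $i$ therefore forces equality for each $i$. In particular $V_i(g_i,(f_j,j\neq i))=V_i(f_1,\ldots,f_N)$, so $g_i$ is also a best response of player $i$. Because $\mcC(\rho_i,\Theta^{(\rho_i)},K)$ is a convex subset of $\mcC(\rho_i)$, the strict concavity of $f_i\mapsto V_i(f_i,(f_j,j\neq i))$ from Lemma~\ref{lem:convexityN} restricts to it, so the best response is unique, whence $f_i=g_i=\frac{\rho_i}{\rho}f$ with $f:=\sum_k f_k$. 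Finally, $\frac{f_i}{\rho_i}=\frac{f}{\rho}$ gives $\int_D\Theta^{(\rho)}(f(x))\,\eta(dx)=\int_D\Theta^{(\rho_i)}(f_i(x))\,\eta(dx)\le K$, so $f\in\mcC(\rho,\Theta^{(\rho)},K)$; uniqueness of the Nash equilibrium itself comes from Theorem~\ref{thm:Restrpureandunique}.

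I expect the main (indeed only) obstacle to be the first step, namely showing $g_i\in\mcC(\rho_i,\Theta^{(\rho_i)},K)$. This is precisely where one needs the restriction on player $j$ to be $\Theta^{(\rho_j)}$ rather than an arbitrary admissible $\Theta_j$: the scaling $\Theta^{(\rho_j)}(x)=\Theta(x/\rho_j)$ makes the normalized densities $f_j/\rho_j$ comparable under one common convex functional, which is what lets Jensen's inequality close the estimate. Everything downstream is a routine transcription of the argument in Theorem~\ref{thm:proportionalN}.
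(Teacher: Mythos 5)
Your proposal is correct and follows essentially the same route as the paper: the paper likewise reduces the argument to that of Theorem~\ref{thm:proportionalN} and isolates the same membership facts, verifying that $\frac{\rho_i}{\sum_{j\neq i}\rho_j}\sum_{j\neq i}f_j$, $f$, and $\frac{\rho_i}{\rho}f$ lie in the appropriate restricted action spaces via Lemma~\ref{lem:proportional} and the convexity of $\mcC(1,\Theta,K)$. Your explicit Jensen computation is just an unpacking of that convexity step, so the two arguments coincide in substance.
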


\bpf
The proof is similar to that of Theorem 
\ref{thm:proportionalN}, with the obvious modifications. The only thing that needs to be observed is that for any choice of $f_i \in \mcC(\rho_i, \Theta^{(\rho_i)}, K)$ for $1 \le i \le N$, we also have: 

(i)
$\frac{\rho_i}{\sum_{j \neq i} \rho_j} \sum_{j \neq i} f_j \in \mcC(\rho_i, \Theta^{(\rho_i)}, K)$ for all $1 \le i \le N$; 

(ii) $\sum_{i=1}^N f_i =: f 
\in \mcC(\rho, \Theta^{(\rho)}, K)$, and;

(iii) $\frac{\rho_i}{\rho} f 
\in \mcC(\rho_i, \Theta^{(\rho_i)}, K)$ for all $1 \le i \le N$.

To show (i), by Lemma \ref{lem:proportional}
what we need to show, for all $1 \le i \le N$,
is that 
$\frac{1}{\sum_{j \neq i} \rho_j} \sum_{j \neq i} f_j \in \mcC(1, \Theta, K)$. 
By Lemma \ref{lem:proportional} again, we have
$\frac{f_j}{\rho_j} \in \mcC(1, \Theta, K)$
for all $j \neq i$. The desired claim follows from the convexity of $\mcC(1, \Theta, K)$.

To show (ii), by Lemma \ref{lem:proportional}
what we need to show is that $\frac{f}{\rho} 
\in \mcC(1, \Theta, K)$. By Lemma \ref{lem:proportional} we have $\frac{f_i}{\rho_i} \in \mcC(1, \Theta, K)$ for all $1 \le i \le N$.
The desired claim follows from the convexity of $\mcC(1, \Theta, K)$.

As for (iii), it is an immediate consequence of 
Lemma \ref{lem:proportional} since we have established the claim in (ii).
\epf

Finally, with a view to using restricted Cox process Hotelling games as a vehicle for better understanding the characterization in 
Theorem \ref{thm:ifNEexists}, we can 
prove the following analog of that result for
restricted Cox process Hotelling games.

\begin{thm} 		 \label{thm:RestrifNEexists}
Consider an $N$-player restricted Cox process Hotelling game on the Polish space $D$ with base measure $\eta$ where player $i$,
for $1 \le i \le N$, has the intensity budget $\rho_i > 0$ and the space of pure actions
$\mcC(\rho_i, \Theta^{(\rho_i)}, K)$, where
$\Theta: \mbbR_+ \to \mbbR_+$
is a nondecreasing convex function
with $\Theta(0) = 0$ and 
$\lim_{x \to \infty} \frac{\Theta(x)}{x} = \infty$, and
$0 < K < \infty$. 
The game admits a unique Nash equilibrium,
comprised of pure strategies,
%if and only if there is a
defined in terms of a 
function $f \in \mcC(\rho, \Theta^{(\rho)}, K)$,
where $\rho := \sum_{i=1}^N \rho_i$, such that 
\begin{eqnarray}        \label{eq:Restrcriterion}		
&& \int_{x \in D} f(x) \int_{y \in D} 
e^{ - \int_{u \in B(y \rightarrow x)} f(u) \eta(du)} \eta(dy) \eta(dx)
 \nonumber \\
&&~~~~~~~ \ge
\int_{x \in D} g (x) \int_{y \in D} 
e^{ - \int_{u \in B(y \rightarrow x)} f(u) \eta(du)} \eta(dy) \eta(dx),
\end{eqnarray}
for all $g \in \mcC(\rho, \Theta^{(\rho)}, K)$. 
%If such a function exists, the game
%has a unique Nash equilibrium, 
The corresponding unique Nash equilibrium of the 
game is 
given by the pure strategy profile
$(\frac{\rho_1}{\rho} f, \ldots, \frac{\rho_N}{\rho} f)$.
\end{thm}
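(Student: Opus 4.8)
The plan is to follow the proof of Theorem~\ref{thm:ifNEexists}, but now taking advantage of the fact that the restricted action spaces are compact. As recorded just before the statement of the theorem, since each $\mcC(\rho_i,\Theta^{(\rho_i)},K)$ is a compact convex subset of $\mcM(D)$ in the topology of weak convergence and each $V_i$ is continuous, a mixed-strategy Nash equilibrium exists; by Theorem~\ref{thm:Restrpureandunique} it is unique and pure; and by Theorem~\ref{thm:Restrproportional} it has the proportional form $(\tfrac{\rho_1}{\rho}f,\dots,\tfrac{\rho_N}{\rho}f)$ for some $f\in\mcC(\rho,\Theta^{(\rho)},K)$. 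It then remains to prove (i) that this $f$ satisfies \eqref{eq:Restrcriterion}, and (ii) conversely, that every $f\in\mcC(\rho,\Theta^{(\rho)},K)$ satisfying \eqref{eq:Restrcriterion} gives rise to a Nash equilibrium --- which, by uniqueness, must then coincide with the equilibrium just described, so that in particular the function $f$ in \eqref{eq:Restrcriterion} is uniquely determined.

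The technical core is a variational identity obtained by differentiating the conservation law of Lemma~\ref{lem:totalvalue}. Write $\Psi_f(x):=\int_{y\in D}e^{-\int_{B(y\to x)}f(u)\eta(du)}\eta(dy)$, so that Lemma~\ref{lem:totalvalue} reads $\int_D f(x)\Psi_f(x)\eta(dx)=\eta(D)(1-e^{-\rho})$ for \emph{every} $f\in\mcC(\rho)$. Differentiating this (constant) quantity along a segment $f+th$ with $\int_D h\,\eta=0$ --- so that $f+th$ remains in $\mcC(\rho)$ for small $t\ge 0$ --- gives
\[
\int_{x\in D}f(x)\int_{y\in D}e^{-\int_{B(y\to x)}f(u)\eta(du)}\Big(\int_{B(y\to x)}h(u)\eta(du)\Big)\eta(dy)\eta(dx)=\int_{x\in D}h(x)\Psi_f(x)\eta(dx)
\]
(this can also be verified directly by Fubini and the ``growing balls'' argument used in the proof of Lemma~\ref{lem:totalvalue}). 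Combining it with the value formula \eqref{eq:value}, a short computation shows that, with the opponents held at $(\tfrac{\rho_j}{\rho}f,\,j\neq i)$, the one-sided directional derivative of $g_i\mapsto V_i(g_i,(\tfrac{\rho_j}{\rho}f,\,j\neq i))$ at $g_i=\tfrac{\rho_i}{\rho}f$ along a mass-preserving direction $h$ equals $(1-\tfrac{\rho_i}{\rho})\int_D h(x)\Psi_f(x)\eta(dx)$; there are two contributions, from the change in player $i$'s own intensity and from the change in the total intensity inside the exponential, and it is exactly the displayed identity that collapses the second one.

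For (i): since $(\tfrac{\rho_i}{\rho}f)_i$ is a Nash equilibrium, $\tfrac{\rho_i}{\rho}f$ maximizes the (by Lemma~\ref{lem:convexityN} strictly concave) map $g_i\mapsto V_i(g_i,(\tfrac{\rho_j}{\rho}f,\,j\neq i))$ over the convex set $\mcC(\rho_i,\Theta^{(\rho_i)},K)$, so the directional derivative above is $\le 0$ in every feasible direction. For $g\in\mcC(\rho,\Theta^{(\rho)},K)$, Lemma~\ref{lem:proportional} places $\tfrac{\rho_i}{\rho}g$ in $\mcC(\rho_i,\Theta^{(\rho_i)},K)$, hence $h:=\tfrac{\rho_i}{\rho}(g-f)$ is a feasible mass-preserving direction and $\tfrac{\rho_i}{\rho}(1-\tfrac{\rho_i}{\rho})\int_D(g-f)\Psi_f\,\eta\le 0$; since $\tfrac{\rho_i}{\rho}(1-\tfrac{\rho_i}{\rho})>0$ this yields $\int_D g\,\Psi_f\,\eta\le\int_D f\,\Psi_f\,\eta=\eta(D)(1-e^{-\rho})$, i.e.\ \eqref{eq:Restrcriterion}. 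For (ii): assume $f$ satisfies \eqref{eq:Restrcriterion}, fix $i$ and $g_i\in\mcC(\rho_i,\Theta^{(\rho_i)},K)$, and apply concavity of $V_i$ in player $i$'s action along the segment from $\tfrac{\rho_i}{\rho}f$ to $g_i$: the tangent bound reads $V_i(g_i,(\tfrac{\rho_j}{\rho}f,\,j\neq i))\le V_i(\tfrac{\rho_i}{\rho}f,(\tfrac{\rho_j}{\rho}f,\,j\neq i))+(1-\tfrac{\rho_i}{\rho})\int_D(g_i-\tfrac{\rho_i}{\rho}f)\Psi_f\,\eta$. By Lemma~\ref{lem:proportional} we have $\tfrac{\rho}{\rho_i}g_i\in\mcC(\rho,\Theta^{(\rho)},K)$, so \eqref{eq:Restrcriterion} gives $\int_D g_i\Psi_f\,\eta\le\tfrac{\rho_i}{\rho}\eta(D)(1-e^{-\rho})$; since $V_i(\tfrac{\rho_i}{\rho}f,(\tfrac{\rho_j}{\rho}f,\,j\neq i))=\tfrac{\rho_i}{\rho}\eta(D)(1-e^{-\rho})=\tfrac{\rho_i}{\rho}\int_D f\Psi_f\,\eta$ (Lemma~\ref{lem:totalvalue}), the correction term is $\le 0$, and thus $\tfrac{\rho_i}{\rho}f$ is a best response of player $i$. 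As this holds for every $i$, $(\tfrac{\rho_i}{\rho}f)_i$ is a Nash equilibrium, and Theorem~\ref{thm:Restrpureandunique} makes it the unique one.

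The main obstacle is precisely the step that is glossed over in the proof of Theorem~\ref{thm:ifNEexists}: in contrast to the two-player reductions used earlier (e.g.\ in Theorem~\ref{thm:proportional2}), the deviation value $V_i(g_i,(\tfrac{\rho_j}{\rho}f,\,j\neq i))$ is \emph{not} simply $\int_D g_i\Psi_f\,\eta$, because the deviation $g_i$ also perturbs the total intensity entering the exponential; passing to the linear domination condition \eqref{eq:Restrcriterion} therefore has to go through the first-order analysis together with the cancellation supplied by the variational identity. The remaining points are routine: differentiability of the value along the relevant segments (dominated convergence, the integrands being bounded by finite measures), and the fact that the rescalings $\tfrac{\rho_i}{\rho}g$ and $\tfrac{\rho}{\rho_i}g_i$ land in the appropriate proportional restricted classes, which is Lemma~\ref{lem:proportional} together with convexity of $\mcC(\rho,\Theta^{(\rho)},K)$.
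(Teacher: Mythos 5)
Your proof is correct, and its skeleton coincides with the paper's: existence from compactness of the restricted action spaces together with Glicksberg, purity and uniqueness from Theorem~\ref{thm:Restrpureandunique}, the proportional form from Theorem~\ref{thm:Restrproportional}, and the rescaling observation that $g\in\mcC(\rho,\Theta^{(\rho)},K)$ iff $\frac{\rho_i}{\rho}g\in\mcC(\rho_i,\Theta^{(\rho_i)},K)$, which is the ``key point'' the paper extracts from Lemma~\ref{lem:proportional}. Where you genuinely depart from---and strengthen---the paper is in the passage between the best-response property of $\frac{\rho_i}{\rho}f$ and the linear domination condition \eqref{eq:Restrcriterion}. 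The paper's proof (deferring to Theorem~\ref{thm:ifNEexists}) presents this equivalence as immediate from the definition of Nash equilibrium, but, as you rightly point out, the deviation value $V_i(g_i,(\frac{\rho_j}{\rho}f,\,j\neq i))$ carries $g_i$ inside the exponential and is therefore not the linear functional $\int_D g_i(x)\Psi_f(x)\eta(dx)$ with $\Psi_f(x):=\int_{y\in D}e^{-\int_{B(y\to x)}f(u)\eta(du)}\eta(dy)$ that appears in \eqref{eq:Restrcriterion}. Your variational identity, obtained by differentiating the conservation law of Lemma~\ref{lem:totalvalue} along mass-preserving segments, shows that the one-sided directional derivative of $V_i$ at the proportional point equals $(1-\frac{\rho_i}{\rho})\int_D h(x)\Psi_f(x)\eta(dx)$, and together with the concavity from Lemma~\ref{lem:convexityN} this yields both implications (first-order necessity at a maximizer over a convex set, and sufficiency of the first-order condition for a concave objective). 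The cost is a routine differentiation-under-the-integral check (harmless here, since $\eta$ is finite and the relevant functions are in $L^1(\eta)$, with the segment staying nonnegative by convexity of the action sets); the gain is a complete justification of a step the paper leaves implicit. Note that the alternative reduction through the opponents' aggregate value via the conservation law leads to the same derivative computation, so some form of your identity appears unavoidable.
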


\bpf
The proof is similar to that of Theorem 
\ref{thm:ifNEexists}, with the obvious modifications. The key point we need to observe
is that, for all $1 \le i \le N$, we have 
$g \in \mcC(\rho, \Theta^{(\rho)}, K)$
iff $\frac{\rho_i}{\rho} g 
\in \mcC(\rho_i, \Theta^{(\rho_i)}, K)$.
Also, as we have observed earlier, in view of the compactness of the action spaces of the individual players and the continuity of the payoff of an individual player in its action when the actions of its opponents are fixed, the existence of 
a mixed strategy Nash equilibrium is 
guaranteed \cite{Glicksberg}.
\epf

\begin{rem}
For the $N$-player restricted Cox process
Hotelling games with proportional restrictions
of the kind considered in Theorem \ref{thm:RestrifNEexists} 
%are guaranteed to have Nash equilibria, 
%we know that for each 
that theorem tells us that for each
choice of $\rho_i > 0$ for $1 \le i \le N$,
nondecreasing convex function
$\Theta: \mbbR_+ \to \mbbR_+$ satisfying
$\Theta(0) = 0$ and 
$\lim_{x \to \infty} \frac{\Theta(x)}{x} = \infty$,
and $0 < K < \infty$, there is a 
unique function 
$f \in \mcC(\rho, \Theta^{(\rho)}, K)$
which satisfies the inequality in equation
\eqref{eq:Restrcriterion}
for all $g \in \mcC(\rho, \Theta^{(\rho)}, K)$,
where $\rho := \sum_{i=1}^N \rho_i$.
We also know that if, for the given choices
of $\rho_i > 0$ for $1 \le i \le N$, the original
$N$-player Cox player Hotelling game considered
in Theorem \ref{thm:ifNEexists}
admits a Nash equilibrium 
then there will be a function $f \in \mcC(\rho)$
satisfying the inequality in Equation 
\eqref{eq:dominate} for all $g \in \mcC(\rho)$,
and, most importantly, that this $f$ will 
be manifest itself as the one verifying
Equation \eqref{eq:Restrcriterion} for some 
choices of nondecreasing convex function
$\Theta: \mbbR_+ \to \mbbR_+$ satisfying
$\Theta(0) = 0$ and 
$\lim_{x \to \infty} \frac{\Theta(x)}{x} = \infty$,
and of $0 < K < \infty$.
It is in this sense that the discussion 
of $N$-player restricted Cox process Hotelling
games gives a vehicle, in principle, to better understand the meaning of the criterion
in Equation \eqref{eq:dominate} for the existence of Nash equilibria in $N$-player Cox process Hotelling games.
\end{rem}

%The following lemma, for which we give two proofs
%in
%Appendix \ref{app:RegularBorel}, is motivated by 
%de la Vall\'{e}e Poussin's well-known characterization of 
%uniform integrability. A discussion of background information needed
%on the concept of uniform integrability, with complete references,
%is given in Appendix \ref{app:UnifInt}.

%\section{Euclidean space}		\label{sec:Euclidean}

%{\em Here we encounter the question of setting up what we mean by the average intensity of a player and when we can prove that the per unit volume expected Voronoi region of each player is well defined.}

%Note:

%Here we cannot expect to have uniqueness of NE since 
%perturbation of the intensities in compact sets will not affect the 
%asymptotic objective.

%Therefore a reasonable target it to just prove that the homogeneous
%strategies are in Nash equilibrium. This can be done using liminf and limsup, so there is no need to worry about existence questions for the limit.

%\section{Concluding remarks}		\label{sec:Conc}

\section*{Acknowledgements}

VA acknowledges support from the NSF grants CNS--1527846, CCF--1618145, CCF-1901004, the NSF Science \& Technology Center grant CCF--0939370 (Science of Information), and the William and Flora Hewlett Foundation
supported Center for Long Term Cybersecurity at Berkeley.
FB was supported by the ERC NEMO grant, under the European Union's Horizon 2020 research and innovation programme,
grant agreement number 788851 to INRIA.
The authors thank Prof. Yadati Narahari for posing the question of whether Cox process Hotelling games might be potential games.

%\section*{Appendices}

%\appendix

%\section{Direct proof of the claim in Example \ref{exm:unitinterval}}
%\label{app:alt-unit-interval}

%Let $D$ be the interval $[- \frac{1}{2}, \frac{1}{2}]$ 
%of the real line, 
%with $\eta$ being the Lebesgue measure restricted to $D$.
%We give a direct proof that for 
%every $2$-player Cox process Hotelling game $(D,\eta,\rho_A,\rho_B)$
%the constant 
%intensity pair $(\barrho_A, \barrho_B)$ is not a Nash
%equilibrium of the game. 

%Let $\rho := \rho_A + \rho_B$. Suppose Bob plays the constant
%intensity strategy $\barrho_B$. If Alice reacts to this with the
%constant intensity strategy $\barrho_A$, then the value she receives is

%\section{Uniform integrability}		\label{app:UnifInt}

%de la Vall\'{e}e Poussin etc.

%\section{Proof of Lemma xxx} \label{app:RegularBorel}

%Regular Borel measures etc.

\section*{Appendix}

\appendix

\section{A sufficient condition for $f \eta$ to be non-conflicting} \label{sec:append}

We give here a sufficient condition for the $\eta$-measure of
the boundary of the Voronoi tessellation of a Poisson point
process $\Phi$ of density $f$ w.r.t. $\eta$ on $D$ to be zero a.s. The setting is that of Subsection \ref{subsec:diffuse}, with 
%in particular
$\int_D f(x)\eta(dx)=\rho <\infty$,
so that $\Phi$ has a finite number of points a.s.

A sufficient condition for the desired property to hold is that
$$\eta\{ z\in D \mbox{ s.t. } \exists X\ne Y\in \Phi \mbox{ with }
d(z,X)=d(z,Y)\} =0,\quad \mbox{a.s.},$$
which holds if
$$\mathbb{E}[\eta\{ z\in D \mbox{ s.t. } \exists X\ne Y\in \Phi \mbox{ with }
d(z,X)=d(z,Y)\}] =0.$$
The latter can be written as
$$\sum_{n\ge 2} \frac{\rho^n}{n !} e^{-\rho} n(n-1)
\int_{z\in D} \int_{x\in D} \int_{y\in D}
1_{d(z,x)=d(z,y)} \eta(dz) f(x) \eta(dx) f(y)\eta(dy)=0,$$
so that a sufficient condition for the desired property to hold is that
\begin{equation}\int_{z\in D} \int_{x\in D} \int_{y\in D}
1_{d(z,x)=d(z,y)} \eta(dz) f(x) \eta(dx) f(y)\eta(dy)=0.\end{equation}

\end{document}